\documentclass[16pt]{amsart}

\usepackage{latexsym,enumerate}
\usepackage{amsmath,amsthm,amsopn,amstext,amscd,amsfonts,amssymb}
\usepackage[ansinew]{inputenc}
\usepackage{verbatim}
\usepackage{graphicx}
\usepackage{pstricks}
\usepackage{wasysym}
\usepackage[all]{xy}

\usepackage{epsfig} % Para graficos
\usepackage{graphicx,psfrag} % Mejor este para    graficos: es mas moderno

\usepackage{multirow, array} % para las tablas
\usepackage{float} % para usar [H]
\usepackage{tikz}
\usepackage{caption}
\usepackage{subcaption}  %para poner figuras en columnas o lineas diferentes

\usepackage{pstricks,pst-node}
\usepackage{multicol}
\usepackage{color}
\usepackage{colortbl}

\graphicspath{ {Imgs/} } %toma las imagenes d ela carpeta Imgs

\newcommand{\NN}{\mathbb{N}}

\newtheorem{theorem}{Theorem}[section]
\newtheorem{lemma}[theorem]{Lemma}
\newtheorem{proposition}[theorem]{Proposition}
\newtheorem{corollary}[theorem]{Corollary}
\newtheorem{definition}[theorem]{Definition}

\newtheorem{remark}[theorem]{Remark}

\newcommand{\spb}[1]{\smallskip}
\newcommand{\mpb}[1]{\medskip}
\newcommand{\bpb}[1]{\bigskip}

\newcommand{\e}{\varepsilon}
\renewcommand{\d}{\delta}

\newcommand{\dx}{\delta ^*(x,y,z,w)}
\newcommand{\pr}{^{\prime}}

\begin{document}
\DeclareGraphicsExtensions{.jpg,.pdf,.mps,.png}
 
\Large %%%%%%%%%%%%%%%%%%%%%%%%%%%%%%%%%%%%%%%%%%%%%%%%%%%%%%%%%%%%%%%%%%% aumenta tama0Š9o de letra

\title{The Quarter-Point Theorem for Gromov Hyperbolic Graphs}
%\title{Gromov product on graphs}
%\title{Hyperbolic chemical graphs}

\author[W. Carballosa]{Walter Carballosa$^{(a)}$}
\address{$^{(a)}$Florida International University}
%\email{wcarball@fiu.edu}

\author[J. Mendez]{Jesus Mendez$^{(b)}$}
\address{$^{(b)}$Facultad de Matem\'aticas, Universidad Aut\'onoma de Guerrero,
Carlos E. Adame No.54 Col. Garita, 39650 Acapulco Gro., Mexico}
%\email{jangelmendezb.1996@gmail.com}

\author[R. Reyes]{Rosalio Reyes$^{(c), (d)}$}
\address{$^{(c)}$Instituto de F\'{\i}sica, Benem\'erita Universidad Aut\'onoma de Puebla,
Apartado Postal J-48, Puebla 72570, Mexico}
%\email{rreyes@ifuap.buap.mx}
\address{$^{(d)}$Universidad Carlos III de Madrid, ROR: https://ror.org/03ths8210, Departamento de Matem\'aticas, Avenida de la Universidad, 30 (edificio Sabatini), 28911, Legan\'es (Madrid), Spain, ORCID: 0000-0002-1702-0003}			 

%\author[Jos\'e M. Rodr{\'\i}guez]{Jos\'e M. Rodr{\'\i}guez$^{(1)}$}
%\address{Departamento de Matem\'aticas, Universidad Carlos III de Madrid,
%Avenida de la Universidad 30, 28911 Legan\'es, Madrid, Spain}
%\email{jomaro@math.uc3m.es}
%\thanks{$^{(1)}$ Supported by a grant from Agencia Estatal de Investigaci\'on (PID2019-106433GB-I00 / AEI / 10.13039/501100011033), Spain.}

\author[Jos\'e M. Sigarreta]{Jos\'e M. Sigarreta$^{(b),*}$}
%\address{Facultad de Matem\'aticas, Universidad Aut\'onoma de Guerrero, Carlos E. Adame No.54 Col. Garita, 39650 Acapulco Gro., Mexico}
\address{$^*$Corresponding author}
\email{josemariasigarretaalmira@hotmail.com}

%\date{\today}

\maketitle{}

%\centerline{{\bf Sierpinski 02.tex}}

\begin{abstract}
%The concept of Gromov hyperbolicity grasps the essence of negatively curved spaces like the classical hyperbolic space and Riemannian manifolds of negative sectional curvature. It is remarkable that a simple concept leads to such a rich general theory. The study of hyperbolic graphs is an interesting topic since the hyperbolicity of any geodesic metric space is equivalent to the hyperbolicity of a graph related to it.
Gromov hyperbolicity measures the geometric complexity of metric spaces and metric graphs through geodesic triangles and four-point conditions. While calculating the exact continuous hyperbolicity constant $\delta^*(G)$ of a graph $G$ typically requires evaluating quadruples of points over an infinite metric continuum, discretizing this domain is a fundamental challenge in metric graph theory. In this paper, we study the continuous Gromov hyperbolicity constant via the Gromov product. Our main result, the Quarter-Point Theorem, establishes that the search space for quadruples $(x,y,z,w)$ attaining the exact hyperbolicity constant $\delta^*(G)$ can be reduced from the entire metric graph to a finite or discrete subset $J'(G)$, comprising solely the vertices, edge midpoints, and quarter-points of $G$. This result holds for every graph, finite or infinite. This theoretical result provides a considerably simplified combinatorial framework for calculating hyperbolicity in complex networks and metric graphs. Furthermore, we exploit this discretization to significantly improve classical results by establishing and structural bounds relating the global hyperbolicity constant $\delta^*(G)$ with its restrictions to vertices ($\delta_V^*(G)$) and midpoints ($\delta_J^*(G)$). % These theoretical tools provide a significantly simplified combinatorial framework for computing hyperbolicity in complex networks and metric graphs.
\end{abstract}

{\it Keywords:} Gromov hyperbolicity, metric graphs, Gromov product. %, discretization, Quarter-Point Theorem, theoretical bounds.

%{\it 2010 AMS Subject Classification Numbers:} %05C07, 92E10, 05C90, 05C12, 05C75.

\section{Introduction}

Hyperbolicity in Gromov's sense is a simple concept, but at the same time it leads to a very rich general theory. 
This concept captures the qualities of negatively curved spaces like the classical hyperbolic space, Riemannian manifolds of negative sectional curvature, and  discrete spaces like trees and Cayley graphs of many finitely generated groups.
Hyperbolicity is a very useful tool that helps us in understanding the relationships between graphs and  manifolds.  
It is known from the works of Gromov and Kanai that  graphs can model  manifolds and many metric spaces (this is an interesting result, since it allows to go from working with a continuous structure that could be complicated to deal with a discrete structure, see, e.g., \cite{Ghys,G1}). 
In \cite{PRT1,T} the equivalence between the hyperbolicity of many surfaces and the  hyperbolicity of simple graphs is proved.

One of the first applications of the Gromov hyperbolic spaces was the analysis of finitely generated groups (see, \emph{e.g.}, \cite{O}). Today, %the mathematical properties and various applications of these spaces are being studied,
the mathematical properties and the applications of Gromov hyperbolicity are subjects of study with a growing interest within graph theory (see  \cite{BRS,BRSV,BeRoSiVi1,BeRoSiTo,BHB1,CPRS,CRSV,K50,GL,K27,K22,K24,KoMo,MiRoSiVi1,MiRoSiVi2,PRSV2,PRT1,PRT2,PT,RS,RSVV,T,WZ} and the references therein).

There are several definitions of Gromov hyperbolicity which are equivalent, in the sense that if $X$ is $\d$-hyperbolic with respect to the definition $A$, then it is $\d'$-hyperbolic with respect to the definition $B$ for some $\d'$ (see, e.g., \cite{BHB, ABCD, Ghys}).

Let $(X,d)$ be a metric space and  $ x,y \in X $. The \emph{Gromov product} between $ x$ and $y$, with base point $w \in X$, is defined as

%$$ (x \mid y )_w :=  {1 \over 2} \big( d(x,w)+d(y,w)- d(x,y) \big ) \geq 0. $$
$$ (x , y )_w :=  {1 \over 2} \big( d(x,w)+d(y,w)- d(x,y) \big ) \geq 0. $$

\noindent We say that the metric space $(X,d)$ is $ \delta$-\emph{hyperbolic} with respect to the  Gromov product, for some constant $ \delta \geq 0  $, if

%$$ (x \mid z )_w \geq \min \{ (x \mid y )_w, (y \mid z )_w \} - \delta $$
%$$ (x , z )_w \geq \min \{ (x , y )_w, (y , z )_w \} - \delta $$
\begin{equation*}
\label{eq:product}
(x,z)_w
\ge \min \big\{ (x,y)_w, (y,z)_w \big\}- \d
\end{equation*}

\noindent for every $x,y,z,w \in X $.

%We say that $X$ is \emph{hyperbolic} if $X$ is $\delta$-hyperbolic for some $\delta \geq 0$; then $X$ is hyperbolic if and only if $ \delta(X)<\infty.$ If $X$ has connected components $\{X_i\}_{i\in I}$, then we define $\delta(X):=\sup_{i\in I} \d(X_i)$, and we say that $X$ is hyperbolic if $\d(X)<\infty$.

%%%%%%%%%%%%%%%%%%%%%%%%%%%%%%%%%55
%%%%%%%%%%%%%%%%%%%%%%%%%%
\begin{definition}\label{def:finite-hyp}
Let $G$ be a graph. We say that $G$ is \emph{hyperbolic} (with respect to the Gromov product) if
$$
\delta^*(G) := \sup\{\delta^*(x,y,z,w) : x,y,z,w \in G\} < \infty.
$$
\end{definition}

%\begin{proposition}\label{prop:equiv-finite}
%Let $G$ be a finite graph. The following statements are equivalent:
%\begin{enumerate}
%\item[(i)] $G$ is hyperbolic in the sense of Definition~\ref{def:finite-hyp}, i.e.\ $\delta^*(G) < \infty$;
%\item[(ii)] there exist $x,y,z,w \in G$ such that $\delta^*(G) = \delta^*(x,y,z,w)$.
%\end{enumerate}
%Moreover, both (i) and (ii) hold automatically for \emph{every} finite graph $G$.
%\end{proposition}

%\begin{proof}
%This is precisely the content of Lemma~\ref{lem:attainment}: since $G$ is finite, $\delta^*(\cdot,\cdot,\cdot,\cdot)$ is a continuous function on the compact space $G^4$, so its supremum is both finite (giving (i)) and attained (giving (ii)) simultaneously. In particular (ii) $\Rightarrow$ (i) is immediate, and (i) $\Rightarrow$ (ii) follows from Lemma~\ref{lem:attainment}, which does not even require (i) as a hypothesis.
%\end{proof}

%%%%%%%%%%%%%%%%%%%%%%%%%%%%%%%%%%%%%%5
%%%%%%%%%%%%%%%%%%%%%%%%%%%%%%%%

\medskip

Throughout this paper, $G=(V,E)=\left( V\left( G \right),E\left(G \right) \right)$ denotes a (finite or infinite) simple (without loops and multiple edges) graph (not necessarily connected) such that $V\neq \emptyset$ and every edge has length $1$. In order to consider a graph $G$ as a geodesic metric space, identify (by an isometry) any edge $uv\in E(G)$ with the interval $[0,1]$ in the real line; then the edge $uv$ (considered as a graph with just one edge) is isometric to the interval $[0,1]$. Thus, the points in $G$ are the vertices and, also, the points in the interior of any edge of $G$. In this way, any connected graph $G$ has a natural distance defined on its points, induced by taking shortest paths in $G$, and we can see $G$ as a metric graph. We denote by $d_G$ or $d$ this distance. If $x,y$ are in different connected components of $G$, we define $d_G(x,y)=\infty$. These properties guarantee that any connected component of any graph is a geodesic metric space.

Let $G$ be a graph and  $x,\, y, \, z, \, w \, \in G$, we denote by $\dx$ the hyperbolicity constant associated to $x,y,z,w$, \emph{i.e.}
$$\dx = \max_{v_i \in \{x, y,z,w \} } \left\lbrace  \min \{ (v_1,v_2)_{v_4},(v_2,v_3)_{v_4} \} -(v_1,v_3)_{v_4} \right\rbrace. $$ %A menos que se especifique lo contrario Consideraremos  $\dx \geq 0$.

Here the maximum ranges over all ways of choosing a base point $v_4 \in \{x,y,z,w\}$ together with an assignment of the remaining three points to the roles $v_1,v_2,v_3$; since the expression $\min\{(v_1,v_2)_{v_4},(v_2,v_3)_{v_4}\}-(v_1,v_3)_{v_4}$ is invariant under swapping $v_1$ and $v_3$, this yields, for each choice of base point, three distinct values to compare, so that $\dx$ is the maximum over the resulting (at most twelve) quantities. Throughout the paper, whenever we write $\d^*(x,y,z,w)=(x,y)_w-(x,z)_w$ we mean that this particular term, with base point $w$, attains the maximum defining $\dx$.

Given a graph $G$, we denote by $J(G)$ the union of the set $V(G)$ and the midpoints of the edges
of $G$. We denote by $J'(G)$ the union of $J(G)$ and  quarter points of the edges
of $G$.

\medskip

%{ \color{red}

If $X$ is a geodesic metric space and $x_1,x_2,x_3\in X$, the union
of three geodesics $[x_1 x_2]$, $[x_2 x_3]$ and $[x_3 x_1]$ is a
\emph{geodesic triangle} that will be denoted by $T=\{x_1,x_2,x_3\}$
and we will say that $x_1,x_2$ and $x_3$ are the vertices of $T$; it
is usual to write also $T=\{[x_1x_2], [x_2x_3], [x_3x_1]\}$. We say
that $T$ is $\d$-{\it thin} if any side of $T$ is contained in the
$\d$-neighborhood of the union of the two other sides. We denote by
$\d(T)$ the sharp thin constant of $T$, i.e., $ \d(T):=\inf\{\d\ge 0\,|\,
\, T \, \text{ is $\d$-thin}\}. $ The space $X$ is
$\d$-\emph{hyperbolic} $($or satisfies the {\it Rips condition} with
constant $\d)$ if every geodesic triangle in $X$ is $\d$-thin.
If we have a triangle with two
identical vertices, we call it a ``bigon"$\!$. Obviously, every bigon in
a $\d$-hyperbolic space is $\d$-thin.
We denote by $\d(X)$ the sharp hyperbolicity constant of $X$, i.e.,
$\d(X):=\sup\{\d(T)\,|\, \, T \, \text{ is a geodesic triangle in }X \}.$
We say that $X$ is \emph{hyperbolic} if $X$ is $\d$-hyperbolic for some $\d \ge 0$; then $X$ is hyperbolic if and only if $ \d(X)<\infty.$
If $X$ has connected components $\{X_i\}_{i\in I}$, then we define $\d(X):=\sup_{i\in I} \d(X_i)$, and we say that $X$ is hyperbolic if $\d(X)<\infty$.
%} %Termina color red

 In \cite{BRS}, the authors show that when studying hyperbolicity, with Rips' condition, in graphs, it is sufficient to consider only the set of vertices and midpoints of edges of the graph. In this work, we consider hyperbolicity with respect to Gromov's product and show that for this definition, it is sufficient to study the vertices of the graph, as well as the midpoints and quarter points of the edges.
 When working with graphs as metric spaces, when considering hyperbolicity with Rips' condition, we work with triangles whose vertices are graph vertices or midpoints of edges, and in which we must specify the paths followed by geodesics. whereas with Gromov's product we work with quadrilaterals on a larger set, but it is not necessary to specify the paths followed by the geodesics. This presents an advantage when considering transformations or operations on the graph, which can significantly distort the paths but the distances do not vary much from the original ones.

%Dada una gr¨¢fica $G$ denotamos por $J(G)$ el conjunto de v¨¦rtices  y puntos medios de aristas. Adem¨¢s denotamos por $J'(G)$ el conjunto de v¨¦rtices, puntos medios y puntos cuartos de aristas. Si $S \subset G$ denotamos por $J'(S)$ el conjunto de v¨¦rtices, puntos medios y puntos cuartos de aristas contenidos en $S$.

%\subsection{Definiciones} \label{Sec:def}

\section{Quarter-point theorem}

%%%%%%%%%%%%%%%%%%%%%%%%%%%%%%%%%%%%%%%%%%%%%%%%%%%%%%%%%%%%
%%%%%%%%%%%%%%%%%%%%%%%%%%%%%%%%%%%%%%%%%%%%%%%%%%%%%%%%%%%%%%%%%%%%%%%%%%%%%%%%%
%%%%%%%%%%%%%%%%%%%%%%%%%%%%%%%%%%%%%%%%%%%%%%%%%%%%%%%%%%%%%%%%%%%%%%%%%%%%%%%%%%%%%%%%%%%%%%%%%%%

\begin{theorem}\label{th:finitos}
Let $G = (V, E)$ be a finite graph, where each edge $e \in E$ is identified with an isometric interval $[0, 1]$. Let $f: G^4 \to \mathbb{R}$ be defined by
\[
f(x, y, z, w) = \min\left\{ (x, y)_w, (y, z)_w \right\} - (x, z)_w.
\]
Then, the Gromov hyperbolicity constant $\delta^*(G) := \sup_{x, y, z, w \in G} f(x, y, z, w)$ is attained as a maximum; that is, there exists $(x_0, y_0, z_0, w_0) \in G^4$ such that $f(x_0, y_0, z_0, w_0) = \delta^*(G)$.
\end{theorem}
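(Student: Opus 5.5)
The plan is a compactness argument. A finite graph $G$ with finitely many edges, each isometric to $[0,1]$, is a compact metric space: it is the image of the finite disjoint union $V \sqcup \bigsqcup_{e\in E}[0,1]$ under the quotient map gluing endpoints to vertices. This union is compact, and the quotient map is continuous onto $G$ with the path metric $d$. Hence $G^4$ is compact in the product topology, and the theorem will follow from the extreme value theorem once $f$ is shown to be continuous and real-valued.

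The one point needing care is that $G$ need not be connected, so $d$ can take the value $\infty$. I would handle this first, by restricting to quadruples in a single component. The connected components of $G$ are finitely many, and each one $G_i$ is itself a finite connected graph, hence compact, and on it $d$ is a finite metric. Whenever two of the points $x,y,z,w$ lie in different components, some Gromov product involves $\infty-\infty$. I would adopt the convention, consistent with $\delta(X):=\sup_i \delta(X_i)$ in the introduction, that such quadruples are excluded or contribute nothing. Then
\[
\delta^*(G)=\max_i \sup_{G_i^4} f ,
\]
and it suffices to prove attainment on each $G_i^4$, taking the component whose supremum is largest (there are finitely many components).

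On a fixed component, $d:G_i\times G_i\to\mathbb{R}$ is $1$-Lipschitz in each variable by the triangle inequality, hence continuous. Each Gromov product $(u,v)_w=\tfrac12\big(d(u,w)+d(v,w)-d(u,v)\big)$ is therefore continuous on $G_i^3$. It follows that $f=\min\{(x,y)_w,(y,z)_w\}-(x,z)_w$ is continuous on $G_i^4$, since the minimum of two continuous functions is continuous. Moreover $0\le (u,v)_w\le \min\{d(u,w),d(v,w)\}\le \operatorname{diam} G_i<\infty$, so $f$ is bounded, and its supremum is finite and attained at some $(x_0,y_0,z_0,w_0)\in G_i^4$ by compactness.

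The main obstacle is really only the compactness of $G$ with respect to the path metric rather than an abstract quotient topology. I would verify it directly. Any sequence in $G$ has infinitely many terms on a single closed edge, since there are finitely many edges. On that edge the parametrization $[0,1]\to G$ is $1$-Lipschitz, because $d$ is at most the length along the edge, so a convergent subsequence in $[0,1]$ gives a convergent subsequence in $G$. Sequential compactness of a metric space then gives compactness, and so $G_i^4$ is compact.
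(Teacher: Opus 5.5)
Your proposal is correct and takes essentially the same route as the paper: compactness of $G^4$, continuity of $f$ inherited from the continuous distance, and the extreme value theorem. Two pieces of extra care go beyond the paper's proof, which treats $d$ as real-valued and simply asserts compactness: you handle disconnected graphs component by component, which is needed because $d$ can equal $\infty$ there, and you check compactness directly via sequences. The only trivial nit is that a sequence may also sit on an isolated vertex rather than on an edge, and finiteness of $V$ handles that case in the same way.
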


\begin{proof}
Since $G$ is finite, $G$ is a finite union of compact metric segments $[0,1]$ glued along a finite set of vertices $V$, making $G$ a compact metric space. By Tychonoff's Theorem, the product space $G^4$ equipped with the product metric is compact.

The geodesic distance $d: G \times G \to \mathbb{R}_{\ge 0}$ is continuous, which implies that the Gromov product $(a, b)_w = \frac{1}{2}(d(a,w) + d(b,w) - d(a,b))$ is continuous on $G^3$. Since $f$ is a linear combination and minimum of continuous Gromov products, $f: G^4 \to \mathbb{R}$ is continuous.

By the Extreme Value Theorem, a continuous real-valued function on a compact space attains its maximum. Hence, there exists $(x_0, y_0, z_0, w_0) \in G^4$ such that $f(x_0, y_0, z_0, w_0) = \delta^*(G)$.
\end{proof}

%%%%%%%%%%%%%%%%%%%%%%%%%%%%%%%%%%%%%%%%%%%%%%%%%%%%%%%%%%%%%%%%%%%%%%%%%%%%%%%%%%%%%%%%%
%%%%%%%%%%%%%%%%%%%%%%%%%%%%%%%%%%%%%%%%%%%%%%%%%%%%%%%%%%%%%%%%%%%%%%%%%%%%%%%%%%%%%%%%%
%%%%%%%%%%%%%%%%%%%%%%%%%%%%%%%%%%%%%%%%%%%%%%%%%%%%%%%%%%%%%%%%%%%%%%%%%%%%%%%%%%%%%%

The following result is quite useful, as it indicates a very important property about the distribution of geodesics (see Figure \ref{fig:base}).

%Figura 1
\begin{figure}[htb]
\centering
\includegraphics[width=0.5\textwidth]{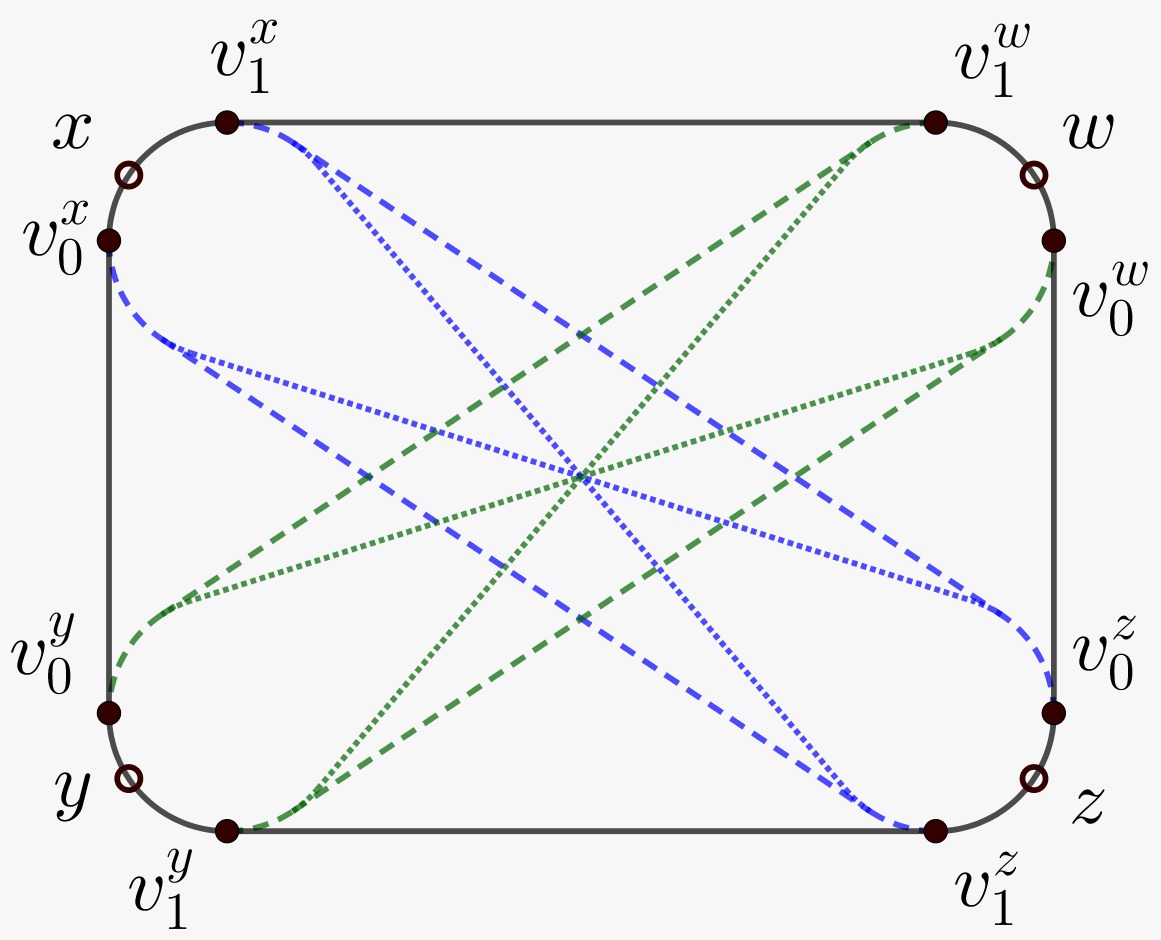}
\caption{If $\d^*(x,y,z,w)=(x,y)_w-(x,z)_w$ then we can consider the geodesics $[xy]$, $[yz]$, $[zw]$ and $[wx]$ as shown in the image, therefore, we will study only the cases for the geodesics $[xz]$(blue) and $[yw]$(green).}
\label{fig:base}
\end{figure}

\begin{proposition} \label{Prop:vinxyxw}
Let $G$ be a graph and $x,y,z,w \in G$ such that $\delta^*(x,y,z,w) = (x,y)_w - (x,z)_w$. If at least one of the following points exists:
 $x' \in [xy] \cap [xw]$,  $y' \in [xy] \cap [yz]$, $z' \in [zw] \cap [yz]$ or  $w' \in [zw] \cap [xw]$,
then the following inequality holds:
$$\delta^*(x^*,y^*,z^*,w^*) \geq \delta^*(x,y,z,w)$$
for any choice of $x^*\in \{x, x' \}$, $y^*\in \{y, y' \}$, $z^* \in \{z, z' \}$, and $w^* \in \{w, w' \}$.
\end{proposition}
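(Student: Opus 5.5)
Write $\Delta:=\delta^*(x,y,z,w)=(x,y)_w-(x,z)_w$. Multiplying by two,
$$2\Delta = d(y,w)-d(x,y)+d(x,z)-d(z,w) = \big(d(x,z)+d(y,w)\big)-\big(d(x,y)+d(z,w)\big).$$
So $2\Delta$ is the "long diagonal sum" minus the sum of the pair $\{xy, zw\}$. By maximality of this term among the twelve quantities, it is also at least the corresponding difference with the pair $\{xw, yz\}$. Replacing a point by a point on a geodesic can only shrink the sides it lies on, and the triangle inequality controls the diagonals.

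The plan is to treat the four replacements one at a time and then iterate. Since the hypothesis is symmetric under the relabelling $x\leftrightarrow z$, $y\leftrightarrow w$ (which preserves the pairing $\{xy,zw\}$ versus $\{xz,yw\}$), it suffices to treat $x'$ and $y'$. The general choice $(x^*,y^*,z^*,w^*)$ then follows by successive replacement. One subtlety must be handled here: after one replacement, the other points $y',z',w'$ must still lie on the appropriate geodesics of the new quadruple. For instance, $[x'y]\subset[xy]$ still contains $y'$ when $y'$ lies between $x'$ and $y$. If the order along $[xy]$ is reversed, I would argue that the quadruple degenerates and the bound is checked directly.

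For $x'\in[xy]\cap[xw]$ with $t=d(x,x')$, we have $d(x',y)=d(x,y)-t$ and $d(x',w)=d(x,w)-t$. By the triangle inequality, $d(x',z)\ge d(x,z)-t$. Hence
$$2\big((x',y)_w-(x',z)_w\big)=d(y,w)-d(x',y)+d(x',z)-d(z,w)\ge d(y,w)-d(x,y)+t+d(x,z)-t-d(z,w)=2\Delta.$$
Since $\delta^*(x',y,z,w)$ is a maximum over terms including this one, $\delta^*(x',y,z,w)\ge\Delta$. The case $y'\in[xy]\cap[yz]$ with $s=d(y,y')$ is analogous, using base point $w$ again:
$$d(y',w)\ge d(y,w)-s,\qquad d(x,y')=d(x,y)-s.$$
These give
$$2\big((x,y')_w-(x,z)_w\big)\ge d(y,w)-s-d(x,y)+s+d(x,z)-d(z,w)=2\Delta.$$

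The main obstacle is the iteration step, not the single replacements, which are one-line triangle-inequality estimates. After replacing $x$ by $x'$, the new quadruple may not attain its maximum at the same term $(x',y)_w-(x',z)_w$. The proposition's hypothesis ("$\delta^*=(x,y)_w-(x,z)_w$") then fails for it. To avoid this, I would not iterate the proposition. Instead, I would estimate the single term $(x^*,y^*)_{w^*}-(x^*,z^*)_{w^*}$ directly for the fully replaced quadruple. Write each of the four distances $d(x^*,y^*)$, $d(z^*,w^*)$, $d(x^*,z^*)$, $d(y^*,w^*)$ using the offsets $t_x,t_y,t_z,t_w$. The sides $xy$ and $zw$ drop exactly by $t_x+t_y$ and $t_z+t_w$, provided the order of the points along the geodesics is compatible. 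The diagonals $xz$ and $yw$ drop by at most the same amounts, by the triangle inequality. The only delicate point is this ordering on the shared geodesics. If $x'$ and $y'$ "cross" on $[xy]$, then $d(x^*,y^*)$ becomes $|d(x,y)-t_x-t_y|$. I expect to handle that case by noting that crossing forces $d(x^*,y^*)\le d(x,y)-t_x-t_y+2(\text{overlap})$, together with a matching extra loss on the diagonal bound. Alternatively, I would show that crossing contradicts the maximality of $\{xy,zw\}$ against $\{xw,yz\}$, which is where the maximality hypothesis is used.
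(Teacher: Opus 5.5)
\textbf{Gap: only one branch of the minimum is checked.} Your one-point estimates are correct as far as they go. The problem is the inference that $\delta^*(x',y,z,w)\ge\Delta$ because $\delta^*$ is a maximum over terms including $(x',y)_w-(x',z)_w$; that step is not valid. The terms in the definition of $\delta^*$ are $\min\{(v_1,v_2)_{v_4},(v_2,v_3)_{v_4}\}-(v_1,v_3)_{v_4}$, not plain differences $(v_1,v_2)_{v_4}-(v_1,v_3)_{v_4}$. A plain difference can strictly exceed $\delta^*$: if $S_1\ge S_2\ge S_3$ are the three pair sums, then $\delta^*=\frac12(S_1-S_2)$, while $\frac12(S_1-S_3)$ is also a plain difference.

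The hypothesis $\delta^*(x,y,z,w)=(x,y)_w-(x,z)_w$ means the minimum in that term is realized by $(x,y)_w$. That is, $(x,y)_w\le(y,z)_w$, or equivalently $d(x,w)+d(y,z)\le d(x,y)+d(z,w)$. Your remark that $2\Delta$ is at least the corresponding difference for the pair $\{xw,yz\}$ therefore has the inequality backwards. The correct consequence is $2\Delta\le\bigl(d(x,z)+d(y,w)\bigr)-\bigl(d(x,w)+d(y,z)\bigr)$.

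What is missing is the second branch of the minimum, which is the second half of the paper's computation. For $x'$, using $d(x',w)=d(x,w)-t$ and $d(x',z)\ge d(x,z)-t$,
\[
(y,z)_w-(x',z)_w\ \ge\ (y,z)_w-(x,z)_w\ \ge\ (x,y)_w-(x,z)_w=\Delta ,
\]
where the last step is exactly the hypothesis $(y,z)_w\ge(x,y)_w$. For $y'$, use $d(y',z)=d(y,z)-s$ and $d(y',w)\ge d(y,w)-s$. Only with both branches do you get $\min\{(x',y)_w,(y,z)_w\}-(x',z)_w\ge\Delta$. The same omission recurs in your direct estimate for the fully replaced quadruple, where you track the sides $xy,zw$ but never $xw,yz$.

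\textbf{Simultaneous replacements.} Once both branches are included, your direct treatment of simultaneous replacements is sound, and it goes beyond the paper, whose proof only checks the four single replacements. The crossing case you leave open is vacuous. For $x',y'$ on $[xy]$, we have $d(x,y')=d(x,y)-s$, $d(y,x')=d(x,y)-t$, $d(y',z)=d(y,z)-s$ and $d(x',w)=d(x,w)-t$. The triangle inequality then gives
\[
d(x,z)+d(y,w)\le d(x,w)+d(y,z)+2\bigl(d(x,y)-s-t\bigr).
\]
Hence $d(x,z)+d(y,w)\ge d(x,w)+d(y,z)$ forces $s+t\le d(x,y)$; this inequality combines $\Delta\ge 0$ with the hypothesis. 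The same argument on $[xw]$, using $d(x,z)+d(y,w)\ge d(x,y)+d(z,w)$ (that is, $\Delta\ge0$), rules out crossing of $x'$ and $w'$. The sides $[yz]$ and $[zw]$ follow by symmetry.

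Without crossing, the four sides shrink exactly by the sums of the relevant offsets, and the diagonals shrink by at most those sums. So both differences $\bigl(d(x,z)+d(y,w)\bigr)-\bigl(d(x,y)+d(z,w)\bigr)$ and $\bigl(d(x,z)+d(y,w)\bigr)-\bigl(d(x,w)+d(y,z)\bigr)$ can only increase. This gives the claim for every choice of $(x^*,y^*,z^*,w^*)$.
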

\begin{proof}
Note that $(y,z)_w \geq (x,y)_w$. If $x' \in [xy] \cap [xw]$ (see Figure \ref{fig:Dist_x}),  we have 
$d(x,y)=d(x,x')+d(x',y)$ and $d(x,w)=d(x,x')+d(x',w)$  and by triangle inequality we obtain  $d(x,z) \leq d(x,x')+d(x',z)$. Then  
\begin{align*}
(x',y)_w-(x',z)_w&= \frac{1}{2} \left( d(x',w)+d(y,w)-d(x',y)-d(x',w)-d(z,w)+d(x',z)\right) \\
&=\frac{1}{2} \left( d(y,w)-d(x',y)-d(z,w)+d(x',z) \right) \\
&\geq \frac{1}{2} \left( d(y,w)-(d(x,y)-d(x,x'))-d(z,w)+d(x,z)-d(x,x')\right) \\
&= \frac{1}{2} \left( d(y,w)-d(x,y)-d(z,w)+d(x,z) \right) \\
&= (x,y)_w-(x,z)_w. \\
(y,z)_w-(x',z)_w&=\frac{1}{2} \left( d(y,w)+d(z,w)-d(y,z)-d(x',w)-d(z,w)+d(x',z) \right) \\
&=\frac{1}{2} \left(  d(y,w)-d(y,z)-d(x',w)+d(x',z)  \right) \\
&\geq \frac{1}{2} \left(  d(y,w)-d(y,z)-(d(x,w)-d(x,x'))+d(x,z) -d(x,x') \right) \\
&=\frac{1}{2} \left( d(y,w)-d(y,z)-d(x,w) +d(x,z) \right) \\
& =(y,z)_w-(x,z)_w \geq (x,y)_w-(x,z)_w.
\end{align*}
Hence, 
$
\d^*(x',y,z,w) \geq \min \{ (x',y)_w, \, (y,z)_w \} -(x',z)_w \geq (x,y)_w-(x,z)_w =\d^*(x,y,z,w).
$

If $y\pr \in [xy] \cap [yz]$, following a similar procedure and considering $d(x,y)=d(x,y')+d(y',y)$, $d(y,z)=d(y,y')+d(y',z)$ and $d(y,w) \leq d(y,y')+ d(y',w)$, we obtain 
$
\d^*(x,y',z,w) \geq \min \{ (x,y')_w, \, (y',z)_w \} -(x,z)_w \geq (x,y)_w-(x,z)_w =\d^*(x,y,z,w).
$

If $z\pr \in [zw] \cap [yz]$ consider $d(y,z)=d(y,z')+d(z',z)$, $d(w,z)=d(w,z')+d(z',z)$ and $d(x,z) \leq d(x,z')+ d(z',z)$. Following a similar procedure, we obtain $
\d^*(x,y,z',w) \geq \min \{ (x,y)_w, \, (y,z')_w \} -(x,z')_w \geq (x,y)_w-(x,z)_w =\d^*(x,y,z,w).
$

Finally, if $w\pr \in [zw] \cap [xw]$ then considering $d(w,z)=d(w,w')+d(w',z)$, $d(x,w)=d(x,w')+d(w',w)$ and $d(y,w) \leq d(y,w')+ d(w',w)$ and following a similar way, we obtain $\d^*(x,y,z,w') \geq \min \{ (x,y)_{w'}, \, (y,z)_{w'} \} -(x,z)_{w'} \geq (x,y)_w-(x,z)_w =\d^*(x,y,z,w)$.
\end{proof}

\begin{figure}[htbp]
\centering
\begin{tikzpicture}[
    scale=1.5, 
    important node/.style={circle, fill=black, inner sep=1.8pt},
    variable node/.style={circle, fill=red, inner sep=2pt} % Destaca x' si lo deseas
]

    % Definici¨®n de las coordenadas de los puntos
    \coordinate (x)  at (0.5, 2.2);
    \coordinate (xp) at (1.5, 1.8);
    \coordinate (w)  at (4.0, 2.5);
    \coordinate (y)  at (0.0, 0.0);
    \coordinate (z)  at (5.0, 0.0);

    % Dibujo de las conexiones (Aristas)
    \draw[thick] (x) -- (xp);   % Conexi¨®n x a x'
    \draw[thick] (xp) -- (y);  % Conexi¨®n x' a y
    \draw[thick] (xp) -- (w);  % Conexi¨®n x' a w
    \draw[thick] (y) -- (z);   % Conexi¨®n y a z
    \draw[thick] (z) -- (w);   % Conexi¨®n z a w

    % Dibujo de los puntos (Nodos)
    \node[important node] at (x) {};
    \node[important node] at (xp) {};
    \node[important node] at (y) {};
    \node[important node] at (z) {};
    \node[important node] at (w) {};

    % Etiquetas de texto bien posicionadas para que no se superpongan
    \node[above left]  at (x)  {$x$};
    \node[above right] at (xp) {$x'$};
    \node[above right] at (y)  {$y$};
    \node[below left]  at (z)  {$z$};
    \node[below right] at (w)  {$w$};

\end{tikzpicture}
\caption{If $x' \in [xy] \cap [xw]$ then $\delta^*(x',y,z,w) \geq \delta (x,y,z,w)$.}
\label{fig:Dist_x}
\end{figure}

Let us consider $x,y,z,w \in G \setminus V(G)$ with  $\delta^*(x,y,z,w)=(x,y)_w-(x,z)_w$.  Proposition \ref{Prop:vinxyxw} gives an idea of how to consider the geodesics  $[xy], [yz], [zw]$ and $[wx]$.   If $v^x_0v^x_1, \, v^y_0v^y_1,  \,  v^z_0v^z_1, \, v^w_0v^w_1 \in E(G)$  contain $x, y, z, w$ respectively, with $v^x_0, v^y_0 \in [xy]$ and  $v^z_0, v^w_0 \in [zw]$, then, from now on, we will consider  $v^x_1, v^w_1 \in [xw]$ and $v^y_1, v^w_1 \in [yw]$ (see Figure \ref{fig:base}).

Since we will consider elements of $J'(G)$ the following result will be  useful  (see Figure \ref{fig:23}).

\medskip

\begin{proposition}\label{Th:Pcuarto-Pgen}
Let $G$ be a graph, $v',u'_1,u'_2 \in J'(G)$ such that $d(u'_1,u'_2)=1/4$ and $x \in [u'_1u'_2] \setminus \{u'_1,u'_2 \}$. Consider the geodesic $[v'x]$, if $u'_1 \in [v'x]$ then $d(v',u'_2)> d(v',x)$, in particular $ d(v',u'_2)= d(v',x) +d(x,u'_2)$.
\end{proposition}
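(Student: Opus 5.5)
The plan is to exploit the fact that $[u'_1u'_2]$ is a segment of length $1/4$ lying inside a single edge $e=ab$ of $G$, with no point of $J'(G)$ in its interior. Since $u'_1,u'_2\in J'(G)$ are at distance $1/4$, they are consecutive points among $a$, the first quarter point, the midpoint, the third quarter point and $b$ on $e$. The point $x$ lies strictly between them. The key observation is that $v'\in J'(G)$, so $v'$ is not in the open segment $(u'_1u'_2)$. Hence any geodesic from $v'$ to $x$ enters this open segment through one of its endpoints. The hypothesis $u'_1\in[v'x]$ says it enters through $u'_1$, and we must show that reaching $u'_2$ instead costs strictly more.

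First I record that $d(v',x)=d(v',u'_1)+d(u'_1,x)$, since $u'_1$ lies on the geodesic $[v'x]$. Next I take any geodesic $\gamma=[v'u'_2]$. Since $v'\notin(u'_1u'_2)$, the path $\gamma$ either passes through $u'_1$ and then runs along the segment to $u'_2$, or it reaches $u'_2$ without entering the open segment $(u'_1u'_2)$.

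\emph{Case 1:} $\gamma$ passes through $u'_1$ and then through $x$. Then directly
$$d(v',u'_2)=d(v',x)+d(x,u'_2)>d(v',x).$$

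\emph{Case 2:} $\gamma$ does not contain $x$. Then $\gamma$ followed by the subsegment $[u'_2x]$ is a path from $v'$ to $x$ of length $d(v',u'_2)+d(u'_2,x)$, so
$$d(v',u'_1)+d(u'_1,x)=d(v',x)\le d(v',u'_2)+d(u'_2,x).$$
Conversely, $d(v',u'_2)\le d(v',u'_1)+1/4$. I then have to exclude $d(v',u'_2)\le d(v',x)$. Suppose it holds. Then
$$d(v',u'_1)+d(u'_1,x)\le d(v',u'_2)+d(u'_2,x)\le d(v',x)+d(x,u'_2),$$
which is consistent, so I need a finer argument. I use that all points of $J'(G)$ lie at distances in $\frac14\mathbb{Z}$ from each other. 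Indeed, the distance between two points of $J'(G)$ is realized by a path through vertices, or inside a single edge, and every such length is a multiple of $1/4$. Hence $d(v',u'_1)$ and $d(v',u'_2)$ lie in $\frac14\mathbb{Z}$ and differ by at most $1/4$. Writing $t=d(u'_1,x)\in(0,1/4)$, the assumption $d(v',u'_2)\le d(v',x)=d(v',u'_1)+t<d(v',u'_1)+1/4$ forces $d(v',u'_2)\le d(v',u'_1)$. Then the path through $u'_2$ gives
$$d(v',x)\le d(v',u'_2)+(1/4-t)\le d(v',u'_1)+1/4-t.$$
This must be compared with $d(v',x)=d(v',u'_1)+t$. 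It yields a contradiction when $t>1/8$, but not in general, so this step needs more care.

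The main obstacle is therefore to pin down the precise relation between $d(v',u'_1)$ and $d(v',u'_2)$. I would refine the argument using the $\frac14\mathbb{Z}$ structure and the shape of $d(v',\cdot)$ on the segment. The function $d(v',\cdot)$ restricted to $[u'_1u'_2]$ equals $\min\{d(v',u'_1)+s,\ d(v',u'_2)+1/4-s\}$, a minimum of two linear functions. With both endpoint values in $\frac14\mathbb{Z}$ and differing by at most $1/4$, this minimum is either monotone on the segment or has its break point exactly at the middle of the segment, which is the case $d(v',u'_1)=d(v',u'_2)$. The hypothesis that $[v'x]$ passes through $u'_1$ selects the first branch at $x$. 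In the monotone case, with $d(v',u'_2)=d(v',u'_1)+1/4$, we get $d(v',u'_2)=d(v',x)+d(x,u'_2)$ directly, as required. The remaining task is to rule out the equal-values case, or to show it still yields the claim for the given $x$. This is where I expect the real difficulty, and possibly an implicit restriction on $x$ in the statement.
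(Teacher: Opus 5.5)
Your reduction is sound and follows the same line as the paper: it uses quarter-integrality, then rules out the case $d(v',u'_2)<d(v',u'_1)$ via the geodesic hypothesis. But there is a genuine gap, which you flag yourself. Write $a_1=d(v',u'_1)$ and $a_2=d(v',u'_2)$. You correctly get $d(v',\cdot)=\min\{a_1+s,\,a_2+\frac14-s\}$ on $[u'_1u'_2]$, where $s$ is the distance from $u'_1$, with $a_1,a_2\in\frac14\mathbb{Z}$ and $|a_1-a_2|\le\frac14$. The case $a_2=a_1+\frac14$ gives the claim, and $a_2=a_1-\frac14$ contradicts $u'_1\in[v'x]$. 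You stop at the one nontrivial case, $a_1=a_2$, and it cannot be skipped. If it could occur, then for every $x$ with $d(u'_1,x)<\frac18$ the geodesic $[v'x]$ would pass through $u'_1$ while $d(v',u'_2)=a_1<d(v',x)$, so the proposition would be false. Nothing you have used (quarter-integrality of $a_1,a_2$, the tent shape, the geodesic hypothesis) excludes $a_1=a_2$, so an extra ingredient is needed. No restriction on $x$ is required, though, because $a_1=a_2$ never happens.

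The missing idea is a parity argument. For $v'\in J'(G)$, all distances from $v'$ to vertices lie in a single coset of $\frac12\mathbb{Z}$. They lie in $\mathbb{Z}$ or $\frac12+\mathbb{Z}$ if $v'$ is a vertex or a midpoint. If $v'$ is a quarter point of an edge $pp'$ with $d(v',p)=\frac14$, they lie in $\frac14+\frac12\mathbb{Z}$, since $d(v',u)=\min\{\frac14+d(p,u),\frac34+d(p',u)\}$. Let $e=qr$ be the edge containing $[u'_1u'_2]$, labelled so that $u'_1,u'_2$ are at distances $t_1$ and $t_1+\frac14$ from $q$, with $t_1\in\frac14\mathbb{Z}$.

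If $v'$ is not interior to $e$, the point of $e$ at distance $t$ from $q$ is at distance $\min\{\alpha+t,\beta+1-t\}$ from $v'$, where $\alpha=d(v',q)$ and $\beta=d(v',r)$. Since $\alpha-\beta\in\frac12\mathbb{Z}$, the peak of this tent is at $t^*=\frac{1+\beta-\alpha}{2}\in\frac14\mathbb{Z}$, i.e. at a point of $J'(G)$. But $a_1=a_2$ would force the peak to be the midpoint of the segment, $t^*=t_1+\frac18\notin\frac14\mathbb{Z}$.

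If $v'$ is interior to $e$, let $p$ lie on the side of $v'$ containing the segment. A path from $v'$ to $p$ that leaves $e$ and re-enters through the other endpoint has length at least $d_e(v',q)+1+d_e(r,p)=2-d_e(v',p)\ge d_e(v',p)$, where $d_e$ is distance along $e$. Other leaving paths are trivially longer. So $d(v',\cdot)$ is monotone on $[u'_1u'_2]$ and again $a_1\neq a_2$, which closes your argument.

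The paper handles this same case by a parity argument on the closed curve $[v'u'_1]\cup[u'_1u'_2]\cup[u'_2v']$, which would have length $2a_1+\frac14$. It asserts that this curve has integer length, which is not true in general. For $v'=q$ and $u'_1,u'_2$ at distances $\frac12,\frac34$ from $q$ on $qr$, the curve has length $\frac32$. The coset argument above is a rigorous version of that parity idea.
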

\begin{proof}
Consider $d(v',u'_1)=n+k/4$ with $n$ integer and $k \in \{0,1,2,3 \}$.
Suppose that $d(v',u'_2) \leq d(v',x)$.
If $d(v',u'_2) = d(v',x)$ consider the cycle $C=[v'u'_1] \cup [u'_1u'_2] \cup [u'_2v']$, the length  of $C$ is integer therefore $(2k+1)/4$ is integer but it is not possible. 
If $d(v',u'_2) < d(v',x)$ then  $d(v',u'_2) \leq n+(k-1)/4$. Moreover, note that  $\max \{d(x,u'_1),d(x,u'_2) \} <1/4$ therefore 
$$
d(v',u'_2)+d(u'_2,x)< n+(k-1)/4 + 1/4 = n+k/4 < d(v',u'_1)+d(u'_1,x)
$$
which contradicts $[v',x]=[v',u'_1]\cup[u'_1,x]$ is a geodesic. 
\end{proof}

%Figura 2
\begin{figure}[htb]
\centering
\includegraphics[width=0.5\textwidth]{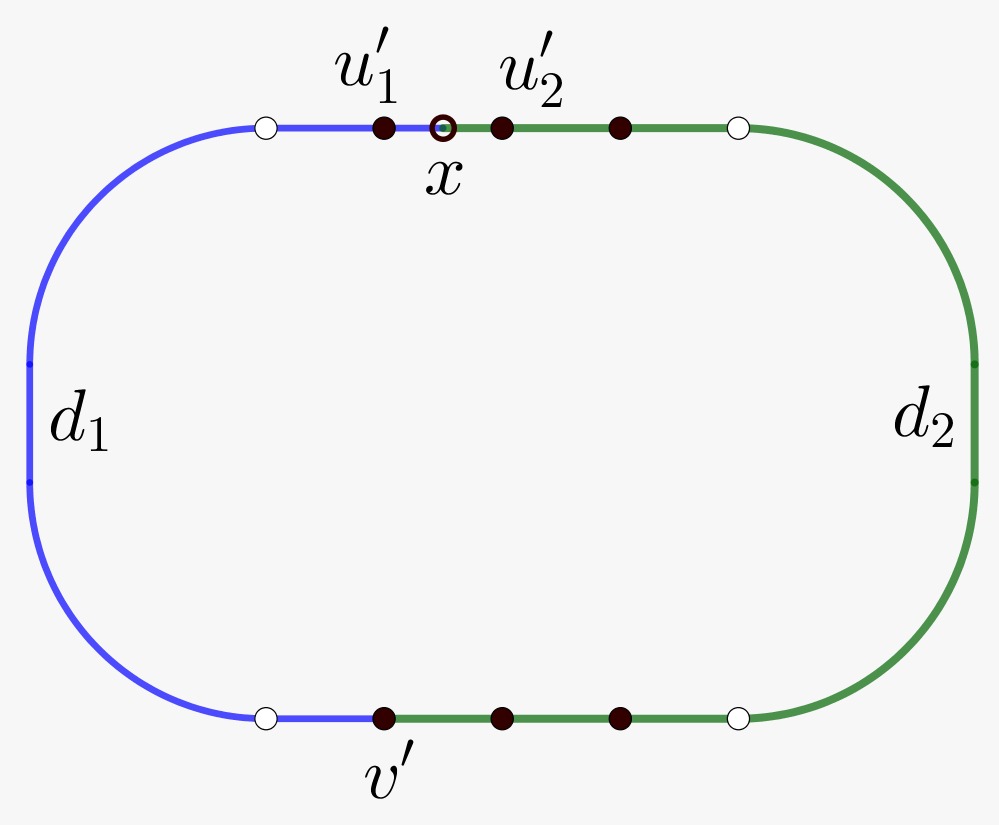}
\caption{If $u'_1, \, u'_2, \, v' \in J'(G)$ and  $d(u'_1,u'_2)=1/4$ then $d(u'_1,v') \not = d(u'_2,v')$. }
\label{fig:23}
\end{figure}

\begin{proposition}\label{Th:1punto}
Let $G$ be a graph and $x,y,z,w \in G$  such that $\dx=(x,y)_w-(x,z)_w$.  If $x_1,x_2,x_3 \in \{ x,y,z,w \} \cap J'(G)$ then there exists $x_4 \in J'(G)$ such that $\delta^* (x_1,x_2,x_3,x_4) \geq \delta^* (x,y,z,w)$.
\end{proposition}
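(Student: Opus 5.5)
Three of the four points already lie in $J'(G)$. The plan is to keep them fixed and push the fourth point, say $p$, to a point of $J'(G)$ without decreasing $\delta^*$. Recall that $\delta^*(x_1,x_2,x_3,x_4)$ is symmetric in its arguments, since it is a maximum over all role assignments. So it suffices to show that, with three of $x,y,z,w$ in $J'(G)$, the remaining point $p$ can be replaced by some $p^*\in J'(G)$ with $\delta^*$ not smaller. We fix the defining term $\delta^*(x,y,z,w)=(x,y)_w-(x,z)_w$ and write it as
\[
F=\tfrac12\big(d(y,w)+d(x,z)-d(x,y)-d(z,w)\big),
\]
together with the side condition $(y,z)_w\ge (x,y)_w$. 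Note that $F$ is a signed combination of the four ``sides'' $d(x,y),d(y,z),d(z,w),d(w,x)$ and the two ``diagonals'' $d(x,z),d(y,w)$.

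First, if $p$ is a vertex or lies in $J'(G)$ there is nothing to do. Otherwise $p$ lies in the interior of a segment $[u'_1u'_2]$ of length $1/4$ with $u'_1,u'_2\in J'(G)$. For each of the three fixed points $v'\in J'(G)$, Proposition \ref{Th:Pcuarto-Pgen} gives a dichotomy. Take a geodesic $[v'p]$ and let $u'_i$ be the endpoint of the small segment through which it passes. Then $d(v',p)=d(v',u'_i)+d(u'_i,p)$ and $d(v',u'_{3-i})=d(v',p)+d(p,u'_{3-i})$. Hence, as $p$ moves inside $[u'_1u'_2]$, each function $t\mapsto d(v',p_t)$ is affine with slope $\pm1$ on the whole open segment. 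There is no interior break point, because the two distances $d(v',u'_1)$ and $d(v',u'_2)$ differ by exactly $1/4$.

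Second, we use this affineness. Each Gromov product appearing in the twelve quantities that define $\delta^*(\cdot)$ involves only distances from $p$ to the three fixed points and distances among fixed points. So along the segment each such quantity is affine in the position $t$ of $p$. In particular the specific expression $g(t)=\min\{(x,y)_w,(y,z)_w\}-(x,z)_w$ (with $p$ in its role) is a minimum of two affine functions, and this is where concavity becomes a problem. To avoid it I would use the following observation. By continuity the affine formulas extend to the closed segment, so their values at the endpoints equal the genuine values at $u'_1,u'_2$. Since each of $(x,y)_w-(x,z)_w$ and $(y,z)_w-(x,z)_w$ is affine, at one endpoint $u'_j$ the first term is at least its value at $p$. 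The delicate part is to get both terms not smaller simultaneously. Here I would invoke the structure of Proposition \ref{Prop:vinxyxw}. If moving $p$ toward $u'_j$ increases one term but decreases the other, then the geodesic from $p$ to some fixed point passes through $u'_j$ in a way that places $u'_j$ on $[xy]\cap[xw]$ (or the analogous intersection for the role of $p$). Then Proposition \ref{Prop:vinxyxw} directly gives $\delta^*$ at $u'_j$ at least $\delta^*(x,y,z,w)$.

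The main obstacle is this case analysis. We must show that for each role of $p$ (base point $w$ or one of $x,y,z$) and each sign pattern of the three slopes, either both relevant affine terms weakly increase toward some endpoint, or a geodesic coincidence of the type in Proposition \ref{Prop:vinxyxw} occurs. I would organize it by the slope vector $(\varepsilon_1,\varepsilon_2,\varepsilon_3)\in\{\pm1\}^3$ of the distances from $p$ to the fixed points. In the main term $F$, the point $p$ enters with coefficient signs $+,-,+$ or similar. If two slopes enter $F$ with opposite contributions, $F$ is constant and we pick the endpoint favourable to the side condition. If the slopes add, one endpoint strictly increases $F$, and the side condition is then checked similarly, falling back on Proposition \ref{Prop:vinxyxw} when it fails. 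Setting $x_4=u'_j$ and reordering the points finishes the proof.
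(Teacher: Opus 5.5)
Your setup is sound, and in fact sharper than needed. Proposition \ref{Th:Pcuarto-Pgen} gives $d(v',u'_2)=d(v',u'_1)\pm\tfrac14$ for every fixed $v'\in J'(G)$, so each $d(v',p_t)$ is affine with slope $\pm1$ on the closed quarter-segment. The gap is exactly at the step you call the main obstacle. You never show that some endpoint makes both $A=(x,y)_w-(x,z)_w$ and $B=(y,z)_w-(x,z)_w$ not smaller, and what you offer in its place does not hold up as stated. First, your description of how $p$ enters $F$ is wrong: there is no ``$+,-,+$'' pattern, since each of $x,y,z,w$ occurs in $F$ in exactly two distances, once with each sign. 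Second, the fallback is asserted with no argument: the claim that a conflict between the two terms forces $u'_j\in[xy]\cap[xw]$, so that Proposition \ref{Prop:vinxyxw} applies, is never justified. So the proof is incomplete at its only nontrivial point.

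The missing idea closes the argument at once. Write $A=\tfrac12\bigl(d(y,w)+d(x,z)-d(x,y)-d(z,w)\bigr)$ and $B=\tfrac12\bigl(d(y,w)+d(x,z)-d(y,z)-d(x,w)\bigr)$. In both, the moving point enters positively only through its distance to its diagonal partner ($x\leftrightarrow z$, $y\leftrightarrow w$), and this is the \emph{same} distance in $A$ and in $B$. It enters negatively through exactly one other distance in each. Move $p$ to the endpoint $u'_j$ that is \emph{not} on the geodesic from $p$ to its partner. By Proposition \ref{Th:Pcuarto-Pgen} the partner distance then grows by exactly $d(p,u'_j)$, while by the triangle inequality every other distance grows by at most $d(p,u'_j)$.

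In your slope language: in that direction $A$ has slope $\tfrac12(1-\varepsilon_A)\ge 0$ and $B$ has slope $\tfrac12(1-\varepsilon_B)\ge 0$, where $\varepsilon_A,\varepsilon_B\in\{\pm1\}$ are the slopes of the negative distances. Hence $\min\{A,B\}$ at $u'_j$ is at least $\min\{A,B\}$ at $p$, which equals $\delta^*(x,y,z,w)$ by the side condition $B\ge A$. In particular the two terms can never move in strictly opposite directions, so you need neither a case analysis on slope vectors nor Proposition \ref{Prop:vinxyxw}. This is precisely the paper's proof: take $x'=v''_x$ if $v'_x\in[xz]$ and $x'=v'_x$ otherwise. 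The same rule applies to $z$ with partner $x$, to $y$ with partner $w$, and to $w$ with partner $y$.
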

\begin{proof}
%Consider $\dx = (x,y)_w-(x,z)_w$. 
Consider the following four cases: \\
\emph{Case i).} %If $x \in J'(G)$ we finish, so, we suppose that $x \not  \in J'(G)$
 $x \not  \in J'(G)$. Consider $x \in v^x_0v^x_1 \in E(G)$ with $v^x_0 \in [xy]$ (see Figure \ref{fig:1punto}).  We denoted by $v'_x$ and $v''_x$ the ends of the quarter of the edge on which $x$ is situated with $d(v^x_0,v'_x) <d(v^x_0,x)<d(v^x_0,v''_x)$ . % los extremos de la cuarta parte de la arista en la que se encuentra $x$ con $d(v_0,v'_x) <d(v_0,x)<d(v_0,v''_x)$.
%Tomemos $x'=v'_x$. Tenemos los siguientes casos:
If $v'_x \in [xz]$ we take $x'=v''_x$; otherwise, we take $x'=v'_x$. By Proposition \ref{Th:Pcuarto-Pgen} we have $d(x',z)=d(x,z)+d(x',x)$, $d(x',y) \leq d(x,y)+d(x,x')$, $d(x',w) \leq d(x,w)+d(x,x')$. So, we have 
\begin{align*}
(x',y)_w-(x',z)_w &= \frac{1}{2} \Bigl( d(x',w)+ d(y,w)-d(x',y)-d(x',w)-d(z,w)+d(x',z) \Bigr) \\
&\geq  \frac{1}{2} \Bigl( d(y,w)-\bigl( d(x,y)+d(x,x') \bigr) -d(z,w)+ \bigl( d(x,z)+d(x,x') \bigr) \Bigr) \\
 &=(x,y)_w-(x,z)_w .\\
 (y,z)_w-(x',z)_w &= \frac{1}{2} \Bigl( d(y,w)+d(z,w)-d(y,z)-d(x',w)-d(z,w)+d(x',z) \Bigr) \\
 & \geq \frac{1}{2} \Bigl(  d(y,w)-d(y,z)-\bigl( d(x,w)+d(x,x') \bigr)+ \bigl( d(x,z) +d(x,x') \bigr) \Bigr) \\
 &=  (y,z)_w-(x,z)_w \\
 &\geq (x,y)_w-(x,z)_w .
\end{align*}
So, we have
$$ \delta^*(x',y,z,w) \geq \min \{ (x',y)_w,(y,z)_w\}-(x',z)_w \geq (x,y)_w-(x,z)_w = \delta^*(x,y,z,w).$$

\medskip

\emph{Case ii).} 
If $z \not \in J'(G)$. Similarly to the previous case, we consider $z \in v^z_0v^z_1 \in E(G)$ with $v^z_0 \in [zw]$.  We denoted by $v'_z$ and $v''_z$ the ends of the quarter of the edge on which $z$ is situated with $d(v^z_0,v'_z) <d(v^z_0,z)<d(v^z_0,v''_z)$.  
If $v'_z \in [xz]$ we take $z'=v''_z$; otherwise, we take $z'=v'_z$. By Proposition \ref{Th:Pcuarto-Pgen} we have $d(x,z')=d(x,z)+d(z,z')$, $d(z',y) \leq d(z,y)+d(z,z')$, $d(z',w) \leq d(z,w)+d(z,z')$. So, we have
\begin{align*}
(x,y)_w-(x,z')_w &= \frac{1}{2} \left( d(x,w)+ d(y,w)-d(x,y)-d(x,w)-d(z',w)+d(x,z') \right) \\
&\geq  \frac{1}{2} \left( d(y,w)-d(x,y) -\bigl( d(w,z)+d(z,z') \bigr) + \bigl( d(x,z)+d(z,z') \bigr) \right) \\
 &=(x,y)_w-(x,z)_w .\\
 (y,z')_w-(x,z')_w &= \frac{1}{2} \left( d(y,w)+d(z',w)-d(y,z')-d(x,w)-d(z',w)+d(x,z') \right) \\
 & \geq \frac{1}{2} \left(  d(y,w)-\bigl( d(y,z)+ d(z,z') \bigr)- d(x,w) + \bigl( d(x,z) +d(z,z') \bigr) \right) \\
 &=  (y,z)_w-(x,z)_w \\
 &\geq (x,y)_w-(x,z)_w .
\end{align*}
So, we have
$$ \delta^*(x,y,z',w) \geq \min \{ (x,y)_w,(y,z')_w\}-(x,z')_w \geq (x,y)_w-(x,z)_w = \delta^*(x,y,z,w).$$

\medskip

\emph{Case iii).} 
$y \not \in J'(G)$. Consider $y \in v^y_0v^y_1 \in E(G)$ with $v^y_0 \in [xy]$.  We denoted by $v'_y$ and $v''_y$ the ends of the quarter of the edge on which $y$ is situated with $d(v^y_0,v'_y) <d(v^y_0,y)<d(v^y_0,v''_y)$.  
If $v'_y \in [yw]$ we take $y'=v''_y$; otherwise, we take $y'=v'_y$. By Proposition \ref{Th:Pcuarto-Pgen} we have $d(y',w)=d(y,w)+d(y,y')$, $d(x,y') \leq d(x,y)+d(y,y')$, $d(y',z) \leq d(y,z)+d(y,y')$. Following a similar procedure, we obtain 
\begin{align*}
(x,y')_w-(x,z)_w & \geq  \frac{1}{2} \left( \left( d(y,w)+ d(y,y') \right)  - \left( d(x,y) +d(y,y') \right) - d(w,z) + d(x,z) \right) \\
 &=(x,y)_w-(x,z)_w .\\
 (y',z)_w-(x,z)_w & \geq \frac{1}{2} \left(  \left(  d(y,w) + d(y,y') \right)  -\left( d(y,z) + d(y,y') \right)- d(x,w) + d(x,z)  \right) \\
 &=  (y,z)_w-(x,z)_w \\
 &\geq (x,y)_w-(x,z)_w .
\end{align*}
So, we have
$$ \delta^*(x,y',z,w) \geq \min \{ (x,y')_w,(y',z)_w\}-(x,z)_w \geq (x,y)_w-(x,z)_w = \delta^*(x,y,z,w).$$

\medskip

\emph{Case iv).} 
 $w \not \in J'(G)$. Consider $w \in v^w_0v^w_1 \in E(G)$ with $v^w_0 \in [wz]$.  We denoted by $v'_w$ and $v''_w$ the ends of the quarter of the edge on which $w$ is situated with $d(v^w_0,v'_w) <d(v^w_0,w)<d(v^w_0,v''_w)$.  
If $v'_w \in [yw]$ we take $w'=v''_w$; otherwise, we take $w'=v'_w$. By Proposition \ref{Th:Pcuarto-Pgen} we have $d(y,w')=d(y,w)+d(w,w')$, $d(x,w') \leq d(x,w)+d(w,w')$, $d(w',z) \leq d(w,z)+d(w,w')$. Following a similar procedure, we obtain 
\begin{align*}
(x,y)_{w'}-(x,z)_{w'} & \geq  \frac{1}{2} \left( \left( d(y,w)+ d(w,w') \right)  - d(x,y) -\left( d(z,w) +d(w,w') \right) + d(x,z) \right) \\
 &=(x,y)_w-(x,z)_w .\\
 (y,z)_{w'}-(x,z)_{w'} & \geq \frac{1}{2} \left(  \left(  d(y,w) + d(w,w') \right)  - d(y,z) -\left( d(x,w) + d(w,w') \right) + d(x,z)  \right) \\
 &=  (y,z)_w-(x,z)_w \\
 &\geq (x,y)_w-(x,z)_w .
\end{align*}
So, we have
$$ \delta^*(x,y,z,w') \geq \min \{ (x,y)_{w'},(y,z)_{w'}\}-(x,z)_{w'} \geq (x,y)_w-(x,z)_w = \delta^*(x,y,z,w).$$
\end{proof}

\begin{figure}[ht]
\centering

\begin{tikzpicture}[scale=1.2]
\centering
% Main rounded rectangular curve %dibujamos apartir de v_0^x en sentido de las manecillas del reloj
\draw[thick, gray!70!black]
  (-0.2,0.6)
  .. controls (0,1.4) and (0.5,2.1) .. (1.4,2.25)
  -- (4.8,2.25)
  .. controls (5.4,2.25) and (5.8,2.0) .. (5.8,1.4)
  -- (5.8,-1.4)
  .. controls (5.8,-2.0) and (5.4,-2.25) .. (4.8,-2.25)
  -- (0.6,-2.25)
  .. controls (0.0,-2.25) and (-0.2,-1.8) .. (-0.2,-1.2)
  -- (-0.2,0.6);

% Dashed blue chord
\draw[dashed, blue!65, thick]
  (-0.2,0.3)
  .. controls (0.4,-0.5) and (2.6,-1.0) .. (4.9,-2.25);

% Marked points
\fill[white] (-0.2,0.6) circle (2.5pt);
\draw[gray!70!black, thick] (-0.2,0.6) circle (2.5pt);

%vertices v'_x, x, v_x''
\fill[brown!60!black] (0.0,1.15) circle (2.5pt);
\fill[brown!60!black] (0.4,1.7) circle (2.5pt);
\fill[brown!60!black] (0.85,2.05) circle (2.5pt);

%vertic v_10†0^x
\fill[white] (1.35,2.25) circle (2.5pt);
\draw[gray!70!black, thick] (1.35,2.25) circle (2.5pt);

%vertice w
\fill[white] (5.65,1.9) circle (2.5pt);
\draw[brown!60!black, thick] (5.65,1.9) circle (2.5pt);

%vertice z
\fill[white] (5.75,-1.75) circle (2.5pt);
\draw[brown!60!black, thick] (5.75,-1.75) circle (2.5pt);

%vertice y
\fill[white] (-0.1,-1.75) circle (2.5pt);
\draw[brown!60!black, thick] (-0.1,-1.75) circle (2.5pt);

% Labels
\node[left] at (-0.25,0.6) {$v_0^x$};
\node[above] at (1.35,2.25) {$v_1^x$};

\node[left] at (0.05,1.15) {$v_x'$};
\node[left] at (0.4,1.7) {$x$};
\node[above] at (0.95,2.05) {$v_x''$};

\node[right] at (5.65,1.9) {$w$};
\node[right] at (5.75,-1.75) {$z$};
\node[left] at (-0.1,-1.75) {$y$};

\end{tikzpicture}

\caption{By taking $x'$ outside of $[xz]$ in this way, we obtain that $\delta'(x',y,z,w) \geq \delta'(x,y,z,w)$. }
\label{fig:1punto}

\end{figure}

Given a graph $G$ and $x,y,z,w \in G$ such that $\delta^*(x,y,z,w) = (x,y)_w - (x,z)_w$, we analyze the situations where exactly two points from the set $\{x,y,z,w\}$ do not belong to $J'(G)$. Specifically, the next result focuses on the pairs $\{x,y\}$, $\{x,w\}$, $\{y,z\}$, and $\{w,z\}$; the remaining cases, namely $\{x,z\}$ and $\{y,w\}$, will be addressed later (see Proposition \ref{Th:2puntos2}).

%%%%%%%%%%%%%%%%%%%%%%%%%%%%%%%%%%%%%%%%%%%%%%%%%%%%%%%%%%%%%%%%%%%%%%%%%%%%%%%%%%%%%%%%%%%%%%%%%%%%%%%%%%%%%%%%%%%%%%%%%%%%%%
\begin{proposition}\label{Th:2puntos1} %Teorema xy
Let $G$ be a graph and $x,y,z,w \in G$  such that $\dx=(x,y)_w-(x,z)_w$, then: \\
If  $z,w \in J^{\prime}(G)$, then there exist $x^{\prime}, y^{\prime} \in J^{\prime}$ such that $\d ^*(x',y',z,w) \geq \d^*(x,y,z,w)$. \\
If $x, y \in J'(G)$, then there exist $z', w' \in J'(G)$ such that $\d ^*(x,y,z',w') \geq \d^*(x,y,z,w)$. \\
If  $y,z \in J^{\prime}(G)$, then there exist $x^{\prime}, w^{\prime} \in J^{\prime}$ such that $\d ^*(x',y,z,w') \geq \d^*(x,y,z,w)$. \\
If $x, w  \in J'(G)$, then there exist $y', z' \in J'(G)$ such that $\d ^*(x,y',z',w) \geq \d^*(x,y,z,w)$. 
\end{proposition}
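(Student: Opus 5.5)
The plan is to move the two points that lie off $J'(G)$ one at a time, using the moves of Proposition \ref{Th:1punto}. The difficulty is that each single-point move used Proposition \ref{Th:Pcuarto-Pgen} only with respect to the two points the moved point is compared against, and those had to be in $J'(G)$. So the order of the moves, and which quarter endpoint we choose, must be fixed so that every distance identity we need refers only to points already in $J'(G)$. Take the first item, with $z,w\in J'(G)$ and $x,y\notin J'(G)$. Moving $x$ in Case i) used only the exact identity $d(x',z)=d(x,z)+d(x,x')$, which comes from Proposition \ref{Th:Pcuarto-Pgen} applied with $v'=z$. For the other two distances we only needed the upper bounds $d(x',y)\le d(x,y)+d(x,x')$ and $d(x',w)\le d(x,w)+d(x,x')$, and these are just the triangle inequality. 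Hence the first move is legitimate even though $y\notin J'(G)$, and we obtain $x'\in J'(G)$ with $\d^*(x',y,z,w)\ge \d^*(x,y,z,w)$.

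Next we look at the value for the new quadruple. From the chain of inequalities in Case i) we get $\min\{(x',y)_w,(y,z)_w\}-(x',z)_w\ge (x,y)_w-(x,z)_w$. We cannot assume this particular term still realizes the maximum defining $\d^*(x',y,z,w)$. However, Case iii) never uses maximality: it only uses the inequality $(y,z)_w\ge (x,y)_w$, and more precisely it only needs the lower bound for the minimum. So we rerun Case iii) with $x'$ in place of $x$. The only exact identity there is $d(y',w)=d(y,w)+d(y,y')$, obtained from Proposition \ref{Th:Pcuarto-Pgen} with $v'=w\in J'(G)$. The remaining distances $d(x',y')$ and $d(y',z)$ again only need the triangle inequality. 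We choose $y'$ as in Case iii), namely $v''_y$ if $v'_y\in[yw]$ and $v'_y$ otherwise. The computation then gives
$$\min\{(x',y')_w,(y',z)_w\}-(x',z)_w\ge \min\{(x',y)_w,(y,z)_w\}-(x',z)_w\ge \d^*(x,y,z,w),$$
which is the required bound.

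The other three items follow the same pattern. Each non-$J'$ point is paired with one fixed $J'$ point that controls its exact identity. In Case i) this partner is $z$ for $x$; in Case ii) it is $x$ for $z$; in Case iii) it is $w$ for $y$; in Case iv) it is $y$ for $w$. In the second item the pairs are $z$ with $x$ and $w$ with $y$, and both partners are in $J'(G)$. In the third item the pairs are $x$ with $z$ and $w$ with $y$. In the fourth item the pairs are $y$ with $w$ and $z$ with $x$. So in each item both partners lie in the fixed set, and we can move the two points sequentially. This uses the ``minimum'' form of the inequality rather than the maximality hypothesis.

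The main obstacle is the bookkeeping around Proposition \ref{Th:Pcuarto-Pgen}. It requires the point $u'_1$, which is $v'_x$ or its counterpart, to lie on the geodesic from the partner $v'$ to the moved point. We must check that the dichotomy ``$v'_x\in[xz]$ or not'' still yields the exact additive identity. In the ``not'' case the geodesic $[zx]$ enters through the other quarter endpoint, and Proposition \ref{Th:Pcuarto-Pgen} is applied with the roles of $u'_1$ and $u'_2$ swapped. This is also where the choice of base point $w$, or its replacement $w'$, must be handled carefully in the second and third items, because the base point itself moves. There I would verify directly that the terms $(\cdot,\cdot)_{w'}$ satisfy the same additive bounds as in Case iv).
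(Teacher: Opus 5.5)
Your proposal is correct and follows essentially the paper's route. The paper also moves one off-$J'(G)$ point exactly as in Case i) or Case ii) of Proposition \ref{Th:1punto}, using only the single partner identity from Proposition \ref{Th:Pcuarto-Pgen}, and then hands the second point to Proposition \ref{Th:1punto}; the only difference is that you rerun the second move directly on the quantity $\min\{(\cdot,\cdot)_w,(\cdot,\cdot)_w\}-(\cdot,\cdot)_w$, which avoids the implicit relabeling the paper needs when it reapplies Proposition \ref{Th:1punto} to a quadruple whose maximizing term may have changed.
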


\begin{proof}
First, let us consider $z,w \in J'(G)$. Let us assume that $x, y \not \in J'(G)$. 
Note that it suffices to prove  that there exists $x\pr \in J\pr$ such that $\delta^*(x\pr,y,z,w) \geq \delta^*(x,y,z,w)$ and so Proposition  \ref{Th:1punto} gives the result.
Since $x\notin J\pr $ then we assume that $x \in v_0v_1 \in E(G)$.  %with $v_0 \in [xy]$. 
Consider $v'_x, \, v''_x \in J'(v_0v_1)$  such that  $d(v_0,v'_x) < d(v_0,x)< d(v_0,v''_x)$ and   $d(v'_x, \, v''_x)=1/4$. We assume $v_0, \, v'_x \in [xy]$.
If $v'_x \in [xz]$ we take $x'=v''_x$; otherwise, we take $x'=v'_x$. By Proposition \ref{Th:Pcuarto-Pgen} we have $d(x',z)=d(x,z)+d(x',x)$, $d(x',y) \leq d(x,y)+d(x,x')$, $d(x',w) \leq d(x,w)+d(x,x')$. So, we have 
\begin{align*}
(x',y)_w-(x',z)_w &= \frac{1}{2} \left( d(x',w)+ d(y,w)-d(x',y)-d(x',w)-d(z,w)+d(x',z) \right) \\
&\geq  \frac{1}{2} \left( d(y,w)-\left( d(x,y)+d(x,x') \right) -d(z,w)+ \left( d(x,z)+d(x,x') \right) \right) \\
 &=(x,y)_w-(x,z)_w .\\
 (y,z)_w-(x',z)_w &= \frac{1}{2} \left( d(y,w)+d(z,w)-d(y,z)-d(x',w)-d(z,w)+d(x',z) \right) \\
 & \geq \frac{1}{2} \left(  d(y,w)-d(y,z)-\left( d(x,w)+d(x,x') \right)+ \left( d(x,z) +d(x,x') \right) \right) \\
 &=  (y,z)_w-(x,z)_w \\
 &\geq (x,y)_w-(x,z)_w .
\end{align*}
So, we have
$$ \delta^*(x',y,z,w) \geq \min \{ (x',y)_w,(y,z)_w\}-(x',z)_w \geq (x,y)_w-(x,z)_w = \delta^*(x,y,z,w).$$
\smallskip
If $y,z \in J^{\prime}$, we proceed exactly as in the previous case.
\smallskip

If $x,y \in J'(G)$ or $x,w \in J'(G)$ we have $z \not \in J'(G)$. We proceed exactly as in \emph{case ii} of Proposition \ref{Th:1punto}, obtaining $z' \in J'(G)$ such that $\delta^*(x,y,z',w) \geq \delta^*(x,y,z,w)$, and proposition \ref{Th:1punto} gives the result.
\end{proof}

\begin{proposition}\label{Prop:ww'}
Let $G$ be a graph and  $x,y,z,w \in G$. Consider $\dx= (x,y)_w-(x,z)_w$ and $d(y,w)=k+i/4$, $k \in \NN$, $i\in \{1,3 \}$, then there exists $w' \in G$ (respectively $y' \in G$) such that $d(w,w')=1/4$ (respectively $d(y,y')=1/4$) and  $\delta^*(x,y,z,w') \geq \dx$ (respectively $\delta^*(x,y',z,w) \geq \dx$).
\end{proposition}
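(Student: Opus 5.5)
The plan is to move $w$ (respectively $y$) a distance exactly $1/4$ along the geodesic $[yw]$, away from the other point, using the extra slack that the quarter-integer distance $d(y,w)=k+i/4$ gives us. I would first write the quantity to be preserved as
$$
(x,y)_w-(x,z)_w=\tfrac12\bigl(d(y,w)-d(x,y)-d(z,w)+d(x,z)\bigr),
$$
together with $(y,z)_w-(x,z)_w=\tfrac12\bigl(d(y,w)-d(y,z)-d(x,w)+d(x,z)\bigr)$. Both expressions contain $d(y,w)$ with coefficient $+1/2$. Moving $w$ to $w'$ with $d(w,w')=1/4$ changes each of $d(z,w)$ and $d(x,w)$ by at most $1/4$, by the triangle inequality. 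Hence it suffices to find $w'$ at distance $1/4$ from $w$ with $d(y,w')=d(y,w)+1/4$. Then, exactly as in Case iv) of Proposition \ref{Th:1punto}, we get $(x,y)_{w'}-(x,z)_{w'}\ge (x,y)_w-(x,z)_w$ and $(y,z)_{w'}-(x,z)_{w'}\ge (y,z)_w-(x,z)_w\ge (x,y)_w-(x,z)_w$, so $\delta^*(x,y,z,w')\ge\dx$.

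The key step is constructing $w'$ with $d(y,w')=d(y,w)+1/4$. If $w$ is a vertex, I would consider the geodesic $[yw]$. Since $d(y,w)\notin\frac12\mathbb Z$, the point $y$ is not a vertex, and I would pick a neighbor edge of $w$ not lying on $[yw]$. If $w$ lies in the interior of an edge $v_0^wv_1^w$, I would take $w'$ on this edge, on the side away from $[yw]$. In general the natural choice is to travel from $w$ along the edge containing it in the direction leaving $[yw]$. The main obstacle is showing that no shorter path to $y$ appears, i.e.\ that the distance from $y$ genuinely increases by the full $1/4$. This is a parity argument in the spirit of Proposition \ref{Th:Pcuarto-Pgen}. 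Distances from $y$ to the two endpoints $v_0,v_1$ of the edge through $w$ differ by at most $1$, and the distance function from $y$ along the edge is piecewise linear with slopes $\pm1$ and a single maximum. The hypothesis $i\in\{1,3\}$ says that $w$ sits at a quarter offset relative to $y$, so $w$ is at distance at least $1/4$ from the point of the edge where $d(y,\cdot)$ attains its maximum (the "antipode" of $y$ on the cycle through that edge). I would make this precise by writing $d(y,v_j)=k_j+\theta$, where $\theta$ is the fractional position of $y$ on its edge, and checking that the maximum of $d(y,\cdot)$ on the edge $v_0v_1$ occurs at a point whose distance from $y$ lies in $\frac12\mathbb Z+\theta$-type positions. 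Combined with $d(y,w)\equiv\pm1/4 \pmod{1/2}$, this forces the monotone stretch of length at least $1/4$ beyond $w$.

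If the parity bookkeeping fails in the case where $w$ is a vertex with all incident edges going back toward $y$, I would instead move $y$. The symmetric statement for $y'$ is proved identically, with the roles of $y$ and $w$ exchanged, using Case iii) of Proposition \ref{Th:1punto}: $d(y',w)=d(y,w)+1/4$, while $d(x,y')$ and $d(y',z)$ increase by at most $1/4$. I expect the existence of a direction along which $d(y,\cdot)$ grows with slope $1$ for a full quarter to be the only delicate point; everything else is the routine inequalities already displayed in Proposition \ref{Th:1punto}.
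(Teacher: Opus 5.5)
Your reduction is correct. If some $w'$ with $d(w,w')=1/4$ satisfies $d(y,w')=d(y,w)+1/4$, the triangle inequality for $d(x,\cdot)$ and $d(z,\cdot)$ gives both inequalities. The gap is the existence of such a $w'$: it fails for a general point $y$, and your parity bookkeeping actually shows why. Let $y$ sit at fractional position $\theta$ on its edge. The local maxima of $d(y,\cdot)$ then include vertices at distance $m+\theta$ or $m+1-\theta$. They also include interior points at distance $m+\tfrac12+\theta$, where both geodesics leave $y$'s edge through the same endpoint and close an odd cycle. For $\theta=1/4$ these values lie in the same class $\mathbb{Z}\pm\tfrac14$ as $d(y,w)$, so $w$ can sit exactly at a peak.

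Concretely, let $G$ be the triangle $c_0c_1c_2$ with a pendant edge $pc_0$. Take $y\in pc_0$ with $d(y,c_0)=1/4$, $x\in c_0c_1$ and $z\in c_0c_2$ with $d(x,c_1)=d(z,c_2)=1/4$, and let $w$ be the midpoint of $c_1c_2$. Then $d(y,w)=7/4$, $d(x,z)=3/2$, $d(x,y)=d(y,z)=1$ and $d(x,w)=d(z,w)=3/4$, so $(x,y)_w=(y,z)_w=3/4$ and $(x,z)_w=0$. Since $\delta^*$ is half the gap between the two largest of the three pair sums (here $13/4,7/4,7/4$), we get $\delta^*(x,y,z,w)=(x,y)_w-(x,z)_w=3/4$, and $d(y,w)=1+3/4$. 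The only points at distance $1/4$ from $w$ are the two quarter points of $c_1c_2$, and both have $d(y,w')=3/2=d(y,w)-1/4$. Your fallback of moving $y$ instead proves the other half of the statement, not the existence of $w'$.

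The situation is worse than a missing step. For the quarter point $w'$ next to $c_1$ one gets $d(x,w')=1/2$ and $d(z,w')=1$, so the pair sums are $3,2,3/2$ and $\delta^*(x,y,z,w')=1/2<3/4$; the other quarter point behaves symmetrically. So the $w'$-half of the proposition is false as stated, and no argument can close the gap. The paper's proof breaks at the same point. Whichever geodesic $[yw]$ is chosen, one of its Cases 1, 2 applies, and it asserts $d(y,w')=d(y,w)+1/4$ without justification, while here $d(y,w')=d(y,w)-1/4$.

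What is true is the statement with an extra hypothesis: $y\in J(G)$ for the $w'$ version, respectively $w\in J(G)$ for the $y'$ version. This is all that Proposition~\ref{Th:2puntos2} uses, since there one of $y,w$ lies in $J(G)$ and the other in $J'(G)\setminus J(G)$. Under $y\in J(G)$ your argument goes through:
\begin{itemize}
\item $w$ cannot be a vertex, because $d(y,\cdot)\in\tfrac12\mathbb{Z}$ at vertices.
\item If $y$ lies on the edge of $w$, stepping to the far endpoint works trivially.
\item Otherwise, with $u,v$ the endpoints of $w$'s edge, the values $d(y,u),d(y,v)$ are congruent mod $1$ and differ by at most $1$. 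So the maximum of $d(y,\cdot)$ on that edge is $\max\{d(y,u),d(y,v)\}$ or $d(y,u)+\tfrac12$. Either way it lies in $\tfrac12\mathbb{Z}$, so it exceeds $d(y,w)$ by at least $1/4$, which gives the monotone quarter step you need.
\end{itemize}
You should state that hypothesis explicitly and drop the claim for general $y$.
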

\begin{proof}
Let $w_x, \, w_z \in G$ such that $w_x \in [xw]$, $w_z \in [zw]$ and $d(w,w_x)=d(w,w_z)=1/4$. We have the following cases: \\
%Consider $w_x \in [xw]$ and $w_z \in [zw]$ such that $d(w,w_x)=d(w,w_z)=1/4$. We have the following cases: \\
\emph{Case $1$}.  $[xw] \cap [yw] \not = \{ w \}$ (see Figure \ref{fig:ww'}). Taking $w' =w_z$, we have $d(x,w') \leq d(x,w)+1/4$, $d(z,w')=d(z,w)-1/4$ and $d(y,w')=d(y,w)+1/4$. So, we have
%Suppose that
\begin{align*}
(x,y)_{w'}-(x,z)_{w'}&= \frac{1}{2} \left( d(x,w')+ d(y,w')-d(x,y)-d(x,w')-d(z,w')+d(x,z) \right) \\
&= \frac{1}{2} \left( d(y,w)+1/4-d(x,y)-(d(z,w)-1/4) +d(x,z)    \right) \\
&= (x,y)_{w}-(x,z)_{w}+1/4. \\
(y,z)_{w'}-(x,z)_{w'}&= \frac{1}{2} \left( d(y,w')+d(z,w')-d(y,z)-d(x,w')-d(z,w')+d(x,z)  \right) \\
&\geq \frac{1}{2} \left( d(y,w) +1/4 - d(y,z)-(d(x,w)+1/4)+d(x,z) \right) \\
%&=\frac{1}{2} \left( d(y,w)+d(z,w)-d(y,z)-d(x,w)-d(z,w)+d(x,z)  \right) \\
&=(y,z)_{w}-(x,z)_{w} \\
&\geq (x,y)_{w}-(x,z)_{w}.
\end{align*}
\emph{Case $2$}.  $[zw] \cap [yw] \not = \{ w \}$. Consider $w' =w_x$. We have $d(x,w') = d(x,w)-1/4$, $d(z,w')\leq d(z,w)+1/4$ and $d(y,w')=d(y,w)+1/4$. So, we have
\begin{align*}
(x,y)_{w'}-(x,z)_{w'}&= \frac{1}{2} \left( d(x,w')+ d(y,w')-d(x,y)-d(x,w')-d(z,w')+d(x,z) \right) \\
&\geq \frac{1}{2} \left( d(y,w)+1/4-d(x,y)-(d(z,w)+1/4) +d(x,z)    \right) \\
&= (x,y)_{w}-(x,z)_{w}. \\
(y,z)_{w'}-(x,z)_{w'}&= \frac{1}{2} \left( d(y,w')+d(z,w')-d(y,z)-d(x,w')-d(z,w')+d(x,z)  \right) \\
&\geq \frac{1}{2} \left( d(y,w) +1/4 - d(y,z)-(d(x,w)-1/4)+d(x,z) \right) \\
%&=\frac{1}{2} \left( d(y,w)+d(z,w)-d(y,z)-d(x,w)-d(z,w)+d(x,z)  \right) \\
&=(y,z)_{w}-(x,z)_{w} +1/4 \\
&\geq (x,y)_{w}-(x,z)_{w}+1/4.
\end{align*}
In both cases we have that 
$\min \{(x,y)_{w'},(y,z)_{w'} \} -((x,z)_{w'} \geq (x,y)_w-(x,z)_w$. Therefore  
$$\delta^*(x,y,z,w') \geq \dx. $$
The respective proof for $y$ and $y'$ is similar.
\end{proof}

\begin{remark}\label{Rem:ww'}
Note that, in proposition \ref{Prop:ww'}, if $(y,z)_w=(x,y)_w$,  then $ \vert (y,z)_{w'}-(x,y)_{w'} \vert =1/4$.
\end{remark}

\begin{figure}[htbp]
\centering
\begin{tikzpicture}[
    scale=1.6, 
    % Estilo para las l¨ªneas normales del grafo (negras y limpias)
    graph edge/.style={draw=black!85, line width=0.9pt, line cap=round, line join=round},
    % Estilo para la diagonal destacada (roja)
    geodesic/.style={draw=red!75!black, line width=1.1pt, line cap=round},
    % Estilos para los v¨¦rtices
    vertex/.style={circle, fill=black, inner sep=0pt, minimum size=4.5pt}
]

    % 1. Definici¨®n geom¨¦trica de las coordenadas (Rect¨¢ngulo base)
    \coordinate (y)   at (0,0);
    \coordinate (wz)  at (5,2);
    \coordinate (z)   at (5,0);
    \coordinate (w)   at (5,2.5); % y est¨¢ exactamente sobre z (misma X)
    \coordinate (corner) at (0,2.5); % Esquina superior izquierda te¨®rica
    
    % Puntos sobre el lado izquierdo del rect¨¢ngulo
    \coordinate (wx)  at (4.5,2.5);  
    \coordinate (x)   at (0,2.5);

    % 2. Trazado de las aristas del contorno (L¨ªneas negras)
    \path[graph edge] (wz) -- (z);
    \path[graph edge] (z)  -- (y);
    \path[graph edge] (y)  -- (corner) -- (x); % Sigue el borde superior recto
    \path[graph edge] (x)  -- (wx);
    \path[graph edge] (wx) -- (w);
    \path[graph edge] (w)  -- (wz);

    % 3. Trazado de la geod¨¦sica interna (L¨ªnea roja diagonal)
    \path[geodesic] (y) -- (wx);

    % 4. Dibujo de los puntos (Nodos sobre las l¨ªneas)
    \node[vertex] at (x)  {};
    \node[vertex] at (wx) {};
    \node[vertex] at (w)  {};
    \node[vertex] at (wz) {};
    \node[vertex] at (z)  {};
    \node[vertex] at (y)  {};

    % 5. Etiquetas de texto bien posicionadas
    \node[left]        at (x)  {$x$};
    \node[above]        at (wx) {$w_x$};
    \node[right]  at (w)  {$w$};
    \node[right]       at (wz) {$w_z$};
    \node[below right] at (z)  {$z$};
    \node[below left] at (y)  {$y$};
%    \node[above right] at (y)  {$y$};

\end{tikzpicture}
\caption{If $d(y,w)=k+i/4$, $k \in \mathbb{N}$, $i \in \{1,3 \}$, exists $w'$ such that $\delta^*(x,y,z,w') \geq \delta^*(x,y,z,w)$. }
\label{fig:ww'}
\end{figure}

The proof of the following result follows a similar way to the proof of Proposition \ref{Prop:ww'}.

\begin{lemma} \label{Prop:xx'}
Let $G$ be a graph and  $x,y,z,w \in G$. Consider $\dx=(x,y)_w-(x,z)_w$.
%Let $G$ be a graph and  $x,y,z,w \in G$. Consider $\dx= \min\{ (x,y)_w,\, (y,z)_w \} -(x,z)_w=(x,y)_w-(x,z)_w$.
 If $(y,z)_w-(x,y)_w = \alpha >0$ and $x' \in [xy]$, $z' \in [zw]$ with $d(x,x') \leq \alpha$ or $d(z,z') \leq \alpha$, then $\delta^*(x',y,z,w) = \dx$ or  $\delta^*(x,y,z',w) = \dx$.
\end{lemma}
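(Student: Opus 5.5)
The plan is to prove the two inequalities $\delta^*(x',y,z,w)\ge \dx$ and $\delta^*(x',y,z,w)\le \dx$ separately, and likewise for $z'$. The lower bound is a direct computation in the style of Propositions \ref{Prop:vinxyxw} and \ref{Th:1punto}. The upper bound is where the stated \emph{equality} must come from, and I will obtain it by reading $\dx$ as the extremal value $\delta^*(G)$, which exists by Theorem \ref{th:finitos} in the finite case.

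\emph{Lower bound for $x'$.} Put $t=d(x,x')\le\alpha$. Since $x'\in[xy]$, we have $d(x',y)=d(x,y)-t$. The triangle inequality gives $d(x',z)\ge d(x,z)-t$ and $d(x',w)\le d(x,w)+t$. Expanding the Gromov products as in Proposition \ref{Th:1punto}, the first difference is
\[
(x',y)_w-(x',z)_w=\tfrac12\bigl(d(y,w)-d(x',y)-d(z,w)+d(x',z)\bigr)\ge (x,y)_w-(x,z)_w .
\]
Here the $+t$ coming from $d(x',y)$ cancels the $-t$ coming from $d(x',z)$. The same expansion gives
\[
(y,z)_w-(x',z)_w\ge (y,z)_w-(x,z)_w-t=(x,y)_w-(x,z)_w+\alpha-t\ge (x,y)_w-(x,z)_w .
\]
This second estimate is exactly where $t\le\alpha$ is used: the slack $\alpha$ absorbs the loss of $t$. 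Taking the minimum, $\delta^*(x',y,z,w)\ge\min\{(x',y)_w,(y,z)_w\}-(x',z)_w\ge\dx$.

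\emph{Lower bound for $z'$.} The argument is symmetric. Put $s=d(z,z')\le\alpha$. Since $z'\in[zw]$, we have $d(z',w)=d(z,w)-s$, while $d(x,z')\ge d(x,z)-s$ and $d(y,z')\le d(y,z)+s$. The term $(x,y)_w-(x,z')_w$ loses nothing, because the two $s$ contributions cancel. The term $(y,z')_w-(x,z')_w$ loses at most $s\le\alpha$, which is again covered by the slack $\alpha$.

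\emph{Upper bound.} This is the step I expect to be the main obstacle. A perturbation by $t$ can change each of the twelve terms defining $\delta^*$ by as much as $t$, so the reverse inequality cannot be obtained by a local estimate. Moreover, the hypotheses on the quadruple alone do not prevent $\delta^*(x',y,z,w)$ from exceeding $\dx$. I will therefore use the setting in which the lemma is applied: $(x,y,z,w)$ is a quadruple realizing $\delta^*(G)$, which exists for finite $G$ by Theorem \ref{th:finitos}. Then $\delta^*(x',y,z,w)\le\delta^*(G)=\dx$ is immediate, and combined with the lower bound it gives equality, in both the $x'$ and the $z'$ case.

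I will first try to remove this assumption by showing directly that each of the twelve terms of $\delta^*(x',y,z,w)$ is at most $\dx$. If that fails, I will state the maximality assumption explicitly at the start of the proof.
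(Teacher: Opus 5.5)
Your two lower-bound computations are correct and match what the paper intends. Its proof consists only of the remark that one proceeds as in Proposition~\ref{Prop:ww'}, and that computation yields $\delta^*(x',y,z,w)\ge\delta^*(x,y,z,w)$ (resp.\ for $z'$) and nothing more.

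The genuine gap is the equality, and it cannot be closed because the statement is false as written. Your plan to first bound the twelve terms of $\delta^*(x',y,z,w)$ by $\delta^*(x,y,z,w)$ directly will therefore fail. Take the cycle $C_{10}$ with vertices $0,1,\dots,9$ in cyclic order, and $x=0$, $y=4$, $z=6$, $w=9$.
\begin{itemize}
\item $(x,y)_w=1$, $(y,z)_w=3$ and $(x,z)_w=0$, so $\alpha=2$.
\item $\delta^*$ of a quadruple is half the gap between the two largest of the sums $d(x,y)+d(z,w)=7$, $d(x,z)+d(y,w)=9$, $d(x,w)+d(y,z)=3$. Hence $\delta^*(x,y,z,w)=1=(x,y)_w-(x,z)_w$, and all hypotheses hold.
\item The vertex $x'=1$ lies on the unique geodesic $[xy]$ with $d(x,x')=1\le\alpha$. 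Yet $(x',y)_w=2$ and $(x',z)_w=0$, so $\delta^*(x',y,z,w)=2$.
\end{itemize}
The mechanism is visible in your own estimate. Here $[xz]$ runs through $w$, so $x'$ is not on a geodesic from $x$ to $z$. The bound $d(x',z)\ge d(x,z)-t$ is then not sharp (in fact $d(x',z)=d(x,z)+t$), and the first difference gains $t$. Taking also $z'=w\in[zw]$, for which $\delta^*(x,y,z',w)=0$, shows that even a literal disjunctive reading of the conclusion fails.

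Your fallback does not repair this. Assuming that $(x,y,z,w)$ realizes $\delta^*(G)$ makes the upper bound trivial, but it is an extra hypothesis, not something Theorem~\ref{th:finitos} provides. That theorem only asserts that \emph{some} maximizing quadruple exists, and only for finite $G$. The lemma, by contrast, is stated for an arbitrary quadruple in an arbitrary graph.

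Maximality is also not available where the lemma is used. Theorem~\ref{PuntosCuartos} applies Proposition~\ref{Th:2puntos2} (and through it this lemma) to quadruples that are merely within $\varepsilon$ of the supremum, and there only the inequality is needed.

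So the correct statement to prove is:
\begin{itemize}
\item $\delta^*(x',y,z,w)\ge\delta^*(x,y,z,w)$ when $d(x,x')\le\alpha$;
\item $\delta^*(x,y,z',w)\ge\delta^*(x,y,z,w)$ when $d(z,z')\le\alpha$.
\end{itemize}
Your first two steps already prove exactly this. Claim equality only under the explicitly added assumption $\delta^*(x,y,z,w)=\delta^*(G)$.
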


\begin{remark} \label{Rem:xx'}
In Lemma \ref{Prop:xx'}, if $d(x,x') = \alpha$, $d(z,z') = \alpha$ or $d(x,x') =d(z,z') = \alpha/2$,  then $\vert (y,z)_w-(x',z)_w \vert=0$, $\vert (y,z')_w-(x,z')_w \vert=0$ or  $\vert (y,z')_w-(x',z')_w \vert=0$ respectively.
\end{remark}

\begin{lemma}\label{Th:2pts-2pts}
%\begin{theorem}\label{Th:2pts-2pts}
Let $G$ be a graph and $x,z \in G \setminus J'(G)$. Consider $v'_x,v'_z,v''_x,v''_z \in J'(G)$ such that $x \in [v'_x v''_x]$, $z \in [v'_zv''_z]$ and $v''_x,v''_z \in [xz]$. 
If $d(v'_x,v'_z)< d(x,z)$ then 
$$
d(v'_x,v'_z)=d(v''_x,v''_z),  \quad \quad  d(x,v''_x)+d(z,v''_z)<1/4, \quad d(x,y)<d(v'_x,v''_z)=d(v'_z,v''_x).
$$
 In addition, if $ d(x,z) <d(v'_x,v'_z)$ then
$$ d(v'_x,v'_z) + d(v''_x,v''_z) = k/2, $$
$$ d(v'_x,v'_z) \geq d(v''_x,v''_z) + l/2, $$
for $k, \,l \in \mathbb{N}$, $k$ odd. 
 
%\end{theorem}
\end{lemma}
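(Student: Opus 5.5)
The plan is to reduce everything to four $1/4$-lattice distances and two small real parameters. (I read the last inequality of the first part as $d(x,z)<d(v'_x,v''_z)=d(v'_z,v''_x)$; the "$d(x,y)$" in the statement is presumably a typo.) Put
$$a=d(x,v''_x),\qquad b=d(z,v''_z),\qquad a,b\in(0,1/4),$$
$$D=d(v''_x,v''_z),\quad D'=d(v'_x,v'_z),\quad M_1=d(v'_x,v''_z),\quad M_2=d(v''_x,v'_z).$$
The segments $[v'_xv''_x]$ and $[v'_zv''_z]$ are quarter-segments inside edges. Hence any path from $x$ to $z$ leaves the first segment through $v'_x$ or $v''_x$ and enters the second through $v'_z$ or $v''_z$. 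This gives
$$d(x,z)=\min\{a+b+D,\ (1/4-a)+(1/4-b)+D',\ a+(1/4-b)+M_2,\ (1/4-a)+b+M_1\}.$$
Since $v''_x,v''_z\in[xz]$, the minimum is attained by the first term, so $d(x,z)=a+b+D$ and the other three terms are $\ge a+b+D$.

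The key arithmetic input is a "one-step" version of Proposition \ref{Th:Pcuarto-Pgen}. If $u'_1,u'_2\in J'(G)$ with $d(u'_1,u'_2)=1/4$ and $v'\in J'(G)$, then $|d(v',u'_1)-d(v',u'_2)|\le 1/4$ by the triangle inequality. The two distances also lie in $\frac14\mathbb{Z}$, and they cannot be equal by the cycle-length argument of that proposition. Therefore they differ by exactly $1/4$. Applying this along the chain $D\to M_1\to D'$ (moving $v''_x$ to $v'_x$, then $v''_z$ to $v'_z$) gives $M_1=D\pm 1/4$, $M_2=D\pm1/4$, and $D'-D\in\{-1/2,0,1/2\}$.

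\emph{First part} ($D'<d(x,z)$). We have $D'<a+b+D<D+1/2$, and the second term of the minimum gives $D'\ge D+2(a+b)-1/2>D-1/2$. So $D'-D$ lies in the open interval $(-1/2,1/2)$, which forces $D'=D$. Substituting back, $D<a+b+D\le 1/2-a-b+D$, so $a+b\le 1/4$. The strict inequality $a+b<1/4$ is the delicate point. I would obtain it by excluding equality: if $a+b=1/4$, then $d(x,z)=D+1/4$ and a second geodesic through $v'_x,v'_z$ exists, which I expect contradicts the hypothesis $D'<d(x,z)$ as written. Next, the fourth term gives $M_1\ge D+2a-1/4>D-1/4$, so $M_1=D+1/4$; symmetrically $M_2=D+1/4$. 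Hence $M_1=M_2$ and $d(x,z)=a+b+D<D+1/4=M_1$, which is the last claim.

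\emph{Second part} ($d(x,z)<D'$). Here $D'>a+b+D>D$, so the chain gives $D'=D+1/2$, and $D'\ge D+l/2$ with $l=1$. For $D'+D=2D+1/2=k/2$ with $k$ odd one needs $D\in\frac12\mathbb{Z}$. This is the step I expect to be the main obstacle: the parity information from the cycle argument alone seems to give only $D\in\frac14\mathbb{Z}$. I would try to extract the extra information by using the positions of $v''_x,v''_z$ relative to the endpoints and midpoints of their edges, i.e.\ by refining the cycle argument to track lengths mod $1/2$. If that fails, I would reinterpret the claim as $D'+D\in\frac14\mathbb{Z}$ with $D'-D=1/2$, which suffices for later use.
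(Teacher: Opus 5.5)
\textbf{Verdict.} The proposal has a genuine gap, but not one a better argument would fill. The two steps you could not close are false as stated, and everything else you derive is correct. The paper states this lemma without proof, so there is no argument of its own to compare with.

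\textbf{First part: the strict inequality.} Your reduction gives, in the first case, $D'=D$, $a+b\le 1/4$ and $d(v'_x,v''_z)=d(v''_x,v'_z)=D+1/4\ge d(x,z)$. In the second case it gives $D'=D+1/2$, so $l=1$. The step you called delicate cannot be completed. If $a+b=1/4$, then because $D'=D$ you get $D'=D<D+1/4=d(x,z)$, so the hypothesis is satisfied, not contradicted. Concretely, parametrize $C_4$ by arclength $s\in[0,4)$ with the vertices at the integers. Let $x=1/8$, $z=17/8$, and take $[xz]$ to be the geodesic through $s=1/4$ and $s=2$. Then $v''_x=1/4$, $v''_z=2$, $v'_x=0$, $v'_z=9/4$ and $a=b=1/8$. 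So $D=D'=7/4<2=d(x,z)$, yet $a+b=1/4$ and $d(v'_x,v''_z)=d(v''_x,v'_z)=2=d(x,z)$. Both strict inequalities of the first part therefore fail. Only the non-strict versions hold, which you have proved; equality occurs exactly when the route through $v'_x,v'_z$ is also a geodesic. This boundary case is precisely $d(x,z)=L(C)/2$, the configuration that Case 2 of Proposition \ref{Th:2puntos2} deliberately moves to.

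\textbf{Second part: $k$ need not be odd.} Your doubt here is justified. Write $\pi(p)=0$ if $p\in J(G)$ and $\pi(p)=1$ if $p\in J'(G)\setminus J(G)$. A geodesic between $p,q\in J'(G)$ either lies in one edge, or consists of whole edges plus two end pieces of lengths $s_p$ or $1-s_p$ and $s_q$ or $1-s_q$. Hence $4d(p,q)\equiv \pi(p)+\pi(q) \pmod 2$.

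This parity is also the clean proof of your one-step fact, since adjacent quarter points have opposite $\pi$. By contrast, the closed curve in Proposition \ref{Th:Pcuarto-Pgen} has length in $\frac12\mathbb{Z}$, not necessarily in $\mathbb{Z}$; that still suffices for the contradiction, but the paper's stated reason is imprecise.

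It follows that $D'+D=2D+1/2$ always lies in $\frac12\mathbb{Z}$, and $k$ is odd exactly when $\pi(v''_x)=\pi(v''_z)$. For example, identify the path $u_0u_1u_2$ with $[0,2]$ and take $x=1/8$, $z=13/8$. Then $v''_x=1/4$, $v'_x=0$, $v''_z=3/2$, $v'_z=7/4$, and $d(x,z)=3/2<7/4=D'$. Also $D=5/4$ and $D'+D=3$, so $k=6$. No refinement of the bookkeeping modulo $1/2$ can give oddness, and your fallback of $\frac14\mathbb{Z}$ is weaker than what you already have.

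\textbf{What your argument does prove.} It establishes a corrected lemma:
\begin{itemize}
\item in the first part, the non-strict inequalities $a+b\le 1/4$ and $d(x,z)\le d(v'_x,v''_z)=d(v''_x,v'_z)$, together with $D'=D$;
\item in the second part, $D'=D+1/2$ and $D'+D\in\frac12\mathbb{Z}$, with $k$ odd only when $\pi(v''_x)=\pi(v''_z)$.
\end{itemize}
The lemma as written cannot be proved, because the strict inequalities and the oddness of $k$ fail in general.
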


\begin{figure}[h!]
\centering
\begin{subfigure}[b]{0.4\linewidth}
\includegraphics[width=\linewidth]{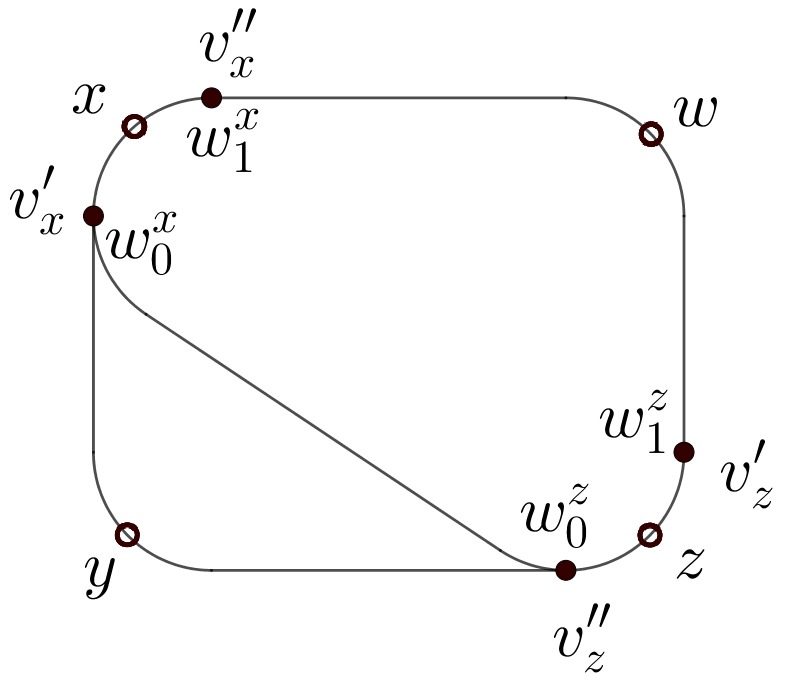}
\caption{Case a.}
\label{fig:xz_a}
\end{subfigure}
\begin{subfigure}[b]{0.4\linewidth}
\includegraphics[width=\linewidth]{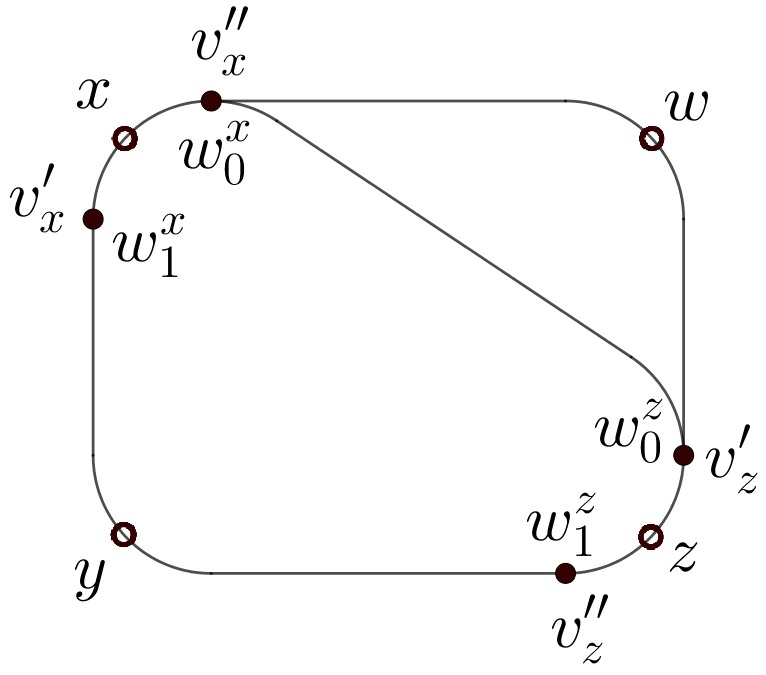}
\caption{Case b.}
\label{fig:xz_b}
\end{subfigure}
\begin{subfigure}[b]{0.4\linewidth}
\includegraphics[width=\linewidth]{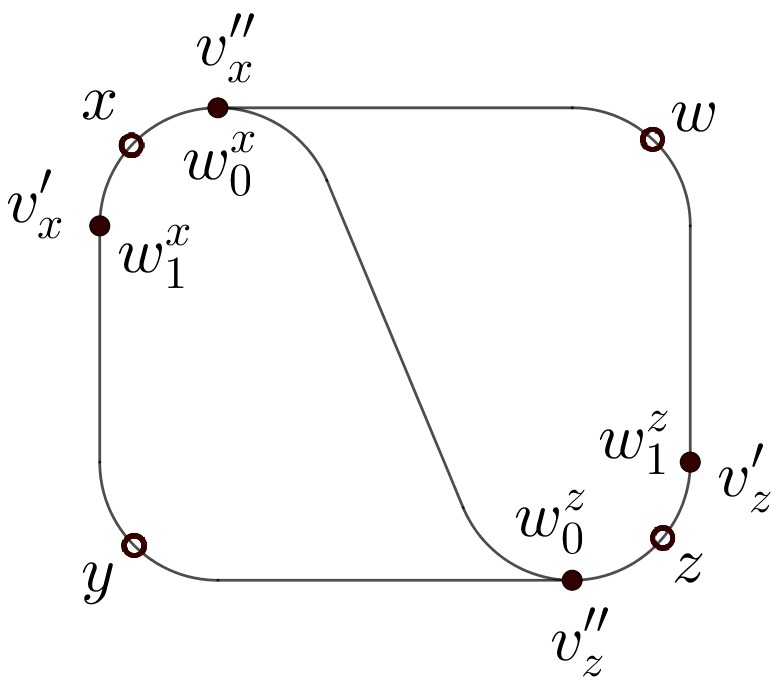}
\caption{Case c.}
\label{fig:xz_c}
\end{subfigure}
\begin{subfigure}[b]{0.4\linewidth}
\includegraphics[width=\linewidth]{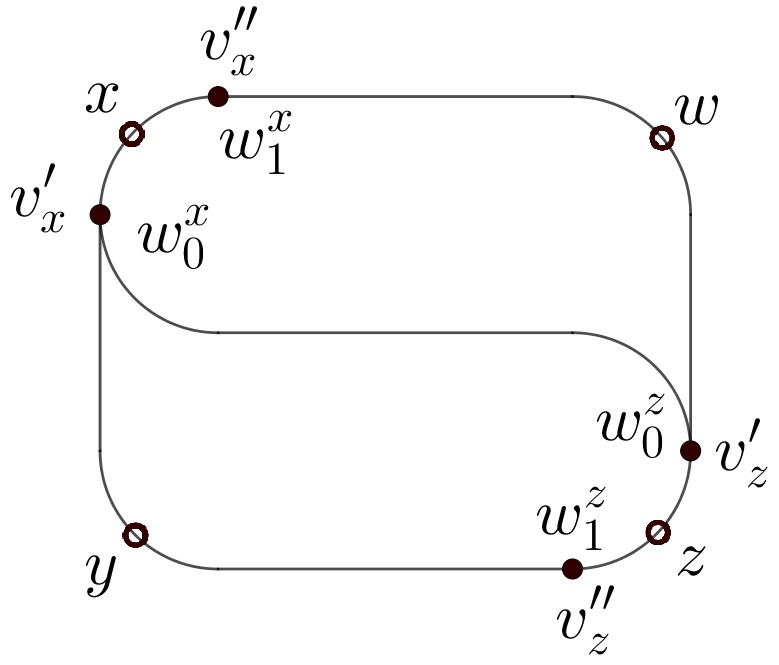}
\caption{Case d.}
\label{fig:xz_d}
\end{subfigure}
\caption{Possible cases for geodesics $[xz]$.}
\label{fig:xz}
\end{figure}

%%%%%%%%%%%%%%%%%%%%%%%%%%%%%%%%%%%%%%%%%%XZ   XZ   XZ
%teorema xz
\begin{proposition}\label{Th:2puntos2} %x,z
Let $G$ be a graph and $x,y,z,w \in G$  such that $\delta^*(x,y,z,w)=(x,y)_w-(x,z)_w$.  
If $y,w \in J'(G)$ and $x,z \not \in J'(G)$, or  $x,z \in J'(G)$ and  $y,w \not \in J'(G)$, then there exist $x', \,  y', \,  z', \, w' \in J'(G)$ such that  
$$
\d^*(x',y',z',w') \geq \d^*(x,y,z,w).
$$
\end{proposition}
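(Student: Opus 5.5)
The two situations in the statement are symmetric in structure, so I will treat the case $y,w\in J'(G)$, $x,z\notin J'(G)$ in detail and then indicate the changes for $x,z\in J'(G)$, $y,w\notin J'(G)$. The guiding idea is the one used in Propositions \ref{Th:1punto} and \ref{Th:2puntos1}. We move one non-$J'$ point to an endpoint of the quarter-edge containing it, so that the remaining three points lie in $J'(G)$. Proposition \ref{Th:1punto} then supplies the fourth point. The new difficulty is that the two moved points are exactly the endpoints of the geodesic $[xz]$ (resp.\ $[yw]$). So the choice ``take the quarter-endpoint outside $[xz]$'' made for $x$ interacts with the corresponding choice for $z$, and Proposition \ref{Th:Pcuarto-Pgen} cannot be applied directly, since the opposite endpoint is not in $J'(G)$.

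Let $v'_x,v''_x$ and $v'_z,v''_z$ be the quarter-endpoints around $x$ and $z$, labelled as in Lemma \ref{Th:2pts-2pts} so that $v''_x,v''_z\in[xz]$. My first attempt is to replace $x$ by $v'_x$, keeping $y,z,w$. The computations of Case i) of Proposition \ref{Th:1punto} go through verbatim as soon as $d(v'_x,z)=d(x,z)+d(x,v'_x)$. This holds whenever the geodesic from $z$ to $v'_x$ passes through $x$. If instead $d(v'_x,z)<d(x,z)+d(x,v'_x)$, then a geodesic from $z$ reaches $v'_x$ from the other side. I then reduce to comparing $d(v'_x,v'_z)$ with $d(x,z)$ and invoke Lemma \ref{Th:2pts-2pts}.

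\emph{Subcase $d(v'_x,v'_z)\ge d(x,z)$.} By the parity statements of Lemma \ref{Th:2pts-2pts} (sum an odd multiple of $1/2$, difference a multiple of $1/2$), together with $d(y,\cdot),d(w,\cdot)$ taking values in $\frac14\mathbb{Z}$ at $J'$ points, I expect to show that simultaneously moving $x\mapsto v'_x$ and $z\mapsto v'_z$ increases $d(x,z)$ by at least $d(x,v'_x)+d(z,v'_z)$. Meanwhile $d(x,y)$, $d(x,w)$, $d(z,y)$, $d(z,w)$ increase by at most those amounts. The same algebra as in Proposition \ref{Th:1punto} then gives $\delta^*(v'_x,y,v'_z,w)\ge\delta^*(x,y,z,w)$ with all four points in $J'(G)$.

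\emph{Subcase $d(v'_x,v'_z)<d(x,z)$.} Lemma \ref{Th:2pts-2pts} gives $d(x,v''_x)+d(z,v''_z)<1/4$, and also that the ``crossed'' distances $d(v'_x,v''_z)=d(v'_z,v''_x)$ exceed $d(x,z)$. Here I move only one point, say $x\mapsto v'_x$ together with $z\mapsto v''_z$. I will check that $d(v'_x,v''_z)$ grows by at least as much as the other relevant distances change. Alternatively, I will use slack: if $(y,z)_w-(x,y)_w=\alpha>0$, Lemma \ref{Prop:xx'} allows a move of $z$ along $[zw]$ of length at most $\alpha$ without loss. If $\alpha=0$, the quarter-parity of $d(y,w)$ together with Proposition \ref{Prop:ww'} and Remark \ref{Rem:ww'} lets me first perturb to a configuration with positive slack.

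For the second case, $x,z\in J'(G)$ and $y,w\notin J'(G)$, the argument is parallel with $[yw]$ in place of $[xz]$. The signs are reversed: increasing $d(y,w)$ \emph{helps}, since $d(y,w)$ enters $(x,y)_w-(x,z)_w$ positively. So I take the quarter-endpoints of $y$ and $w$ away from $[yw]$, as in Cases iii) and iv) of Proposition \ref{Th:1punto}, and use Lemma \ref{Th:2pts-2pts} with $(y,w)$ in the role of $(x,z)$. Once one point is in $J'(G)$, Proposition \ref{Th:1punto} finishes.

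The main obstacle is the subcase $d(v'_x,v'_z)<d(x,z)$, where no single quarter-move obviously preserves the inequality. Getting the bookkeeping right requires combining the inequality $d(x,v''_x)+d(z,v''_z)<1/4$ with a suitable quarter-shift from Proposition \ref{Prop:ww'} or Lemma \ref{Prop:xx'}.
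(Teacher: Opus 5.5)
Your first move and your subcase $d(v'_x,v'_z)\ge d(x,z)$ correspond to the paper's Case~1 (push both points outward). That part can be completed by a parity count: a closed curve through $J'$-points has length in $\tfrac12\mathbb{Z}$. This rules out $d(v'_x,v'_z)=d(v''_x,v''_z)+\tfrac14$, and it also rules out equality with $d(x,z)$.

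The gap is the complementary subcase $d(v'_x,v'_z)<d(x,z)$. You flag it as the main obstacle and leave it open, but it is the real content of the proposition, and neither of your fall-back ideas closes it. Here is a concrete instance. Take the $4$-cycle with vertices at positions $0,1,2,3$, and set $x=\tfrac18$, $y=\tfrac54$, $z=\tfrac{17}{8}$, $w=3$. Then:
\begin{itemize}
\item $y,w\in J'(G)$ and $x,z\notin J'(G)$;
\item $\delta^*(x,y,z,w)=(x,y)_w-(x,z)_w=\tfrac78$ with $(x,y)_w=(y,z)_w$, i.e.\ zero slack;
\item the crossed distances $d(v'_x,v''_z)=d(v''_x,v'_z)$ equal $d(x,z)=2$, so the strict inequalities you quote from Lemma~\ref{Th:2pts-2pts} fail and your crossed move gains nothing;
\item each of the eight ways of replacing $x$ and/or $z$ by an endpoint of its quarter-segment, with $y,w$ fixed, gives $\delta^*=\tfrac34$.
\end{itemize}
So moving $y$ or $w$ is unavoidable, and you need exact control over how far $x$ and $z$ must travel.

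The paper supplies this through the slack $\alpha=|(y,z)_w-(x,y)_w|$.
\begin{itemize}
\item If $\alpha\ge\tfrac14$, Lemma~\ref{Prop:xx'} moves $x$ to $v'_x$ at no cost, and Proposition~\ref{Th:1punto} finishes.
\item If $0<\alpha<\tfrac14$, your plan has no step, since Lemma~\ref{Prop:xx'} only permits moves of length at most $\alpha$, which need not reach a quarter point. The paper instead slides $x$ along $[xy]$ by exactly $\alpha$ (or $x$ and $z$ by $\alpha/2$ each in the crossed configuration). This keeps the value and produces $\alpha=0$.
\item If $\alpha=0$, a parity count on the cycle through $x$ and $z$ forces $d(x,v'_x)=d(x,v''_x)=d(z,v'_z)=d(z,v''_z)=\tfrac18$, and forces exactly one of $y,w$ to lie in $J(G)$.
\end{itemize}
That parity count is what guarantees $d(y,w)$ is an odd multiple of $\tfrac14$. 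This is the hypothesis of Proposition~\ref{Prop:ww'}, which you invoke without justification; it fails, for instance, when $y$ and $w$ are both vertices. Proposition~\ref{Prop:ww'} and Remark~\ref{Rem:ww'} then move whichever of $y,w$ lies outside $J(G)$ by $\tfrac14$, creating slack of exactly $\tfrac14$. That is just enough for Lemma~\ref{Prop:xx'} and Remark~\ref{Rem:xx'} to move $x$ and $z$ by $\tfrac18$ each onto $v'_x$ and $v'_z$.

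The case $y,w\notin J'(G)$ is not ``sign-reversed''. Both $d(x,z)$ and $d(y,w)$ enter $(x,y)_w-(x,z)_w$ and $(y,z)_w-(x,z)_w$ with coefficient $+\tfrac12$, and the relabelling $x\leftrightarrow y$, $z\leftrightarrow w$ exchanges the two cases. So the second case carries the same obstruction, and ``once one point is in $J'(G)$'' does not finish it.
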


\begin{proof}
Consider $y,w \in J'(G)$ and $x,z \not \in J'(G)$. 
Suppose that $x \in v_0^xv_1^x$ and  $z \in v_0^zv_1^z$ with $v_0^xv_1^x, \, v_0^zv_1^z \in E(G)$.  We denoted by $v'_xv''_x, \, v'_zv''_z$  the segments that correspond to the quarter part of edges where  $x$ and $z$  are contained, such that $d(v^x_0,v'_x)<d(v^x_0,x)<d(v^x_0,v''_x)$ and  $d(v^z_0,v'_z)<d(v^z_0,z)<d(v^z_0,v''_z)$. % with  $v'_x,v''_x.v'_z,v''_z \in J'(G)$. 
Consider $v_0^x \in [xy]$ and $v_0^z \in [wz]$. If $v_0^x \in [xw]$ Proposition \ref{Prop:vinxyxw} gives $\d^*(v_0^x,y,z,w) \geq \d^*(x,y,z,w)$ and by Proposition \ref{Th:2puntos1} exists $z' \in  J'(G)$ such that $\d^*(v_0^x,y,z',w) \geq \dx$, therefore we can assume that $v_1^x \in [xw]$. Similarly, we have the result when $v_0^z \in [yz]$. Hence assume that  $v_1^z \in [yz]$. 

There are four possibilities for the geodesic $[xz]$ (see Figure \ref{fig:xz}). Consider $w_0^x,w_1^x \in \{v_x', v_x'' \}$ and  $w_0^z,w_1^z \in \{v_z', v_z'' \}$ such that $w_0^x,w_0^z \in [xz]$. 
 
\emph{Case 1.}  $d(x,z)<d(w_1^x,w_1^z)$. 

By Lemma \ref{Th:2pts-2pts} $[w_1^xx]\cup[xz] \cup [zw_1^z]$ is a geodesic. Taking $x'=w_1^x$ and $z'=w_1^z$. We have 
$d(x',z')=d(x',x)+d(x,z)+d(z,z')$ and $d(x',y) \leq d(x,y)+d(x,x')$, $d(z',w) \leq d(z,w)+d(z,z')$, $d(y,z') \leq d(y,z)+d(z,z')$, $d(x',w) \leq d(x,w)+d(x,x')$. We have
\begin{align*}
(x',y)_w-(x',z')_w 
&= \frac{1}{2} \left( d(x',w)+d(w,y)-d(x',y)-d(x',w)-d(w,z')+d(x',z') \right) \\
&= \frac{1}{2} \left( d(w,y)-d(x',y)-d(w,z')+d(x',z') \right) \\
&\geq \frac{1}{2} \left( d(w,y)-\left( d(x,y) +d(x,x') \right) -\left( d(w,z) +d(z,z') \right) \right. \\
&\qquad \left. + \, \left(d(x,x') + d(x,z) +d(z,z') \right) \right) \\
&= \frac{1}{2} \left( d(y,w)-d(x,y)- d(z,w) +d(x,z) \right) \\
&= \frac{1}{2} \left( d(x,w)+d(w,y)-d(x,y)-d(x,w)-d(w,z)+d(x,z) \right) \\
&= (x,y)_w-(x,z)_w. \\[1ex] % A0Š9ade un peque0Š9o espacio extra antes del segundo bloque
(y,z')_w-(x',z')_w 
&= \frac{1}{2} \left( d(y,w)-d(y,z')-d(x',w)+d(x',z') \right) \\
&= \frac{1}{2} \left( d(y,w)-\left( d(y,z) + d(z,z') \right) - \left( d(x,w) +d(x,x') \right) \right. \\
&\qquad \left. + \, \left(d(x',x)+ d(x,z) +d(z,z') \right) \right) \\
&= \frac{1}{2} \left( d(y,w)-d(y,z)-d(x,w)+d(x,z) \right) \\
&= (y,z)_w-(x,z)_w \\
&> (x,y)_w-(x,z)_w.
\end{align*}

\emph{Case 2.}  $d(w_1^x,w_1^z)<d(x,z)$. 

In this case there exists a cycle $C$ of length $L(C)$. We claim  that    $d(x,z)= L(C)/2$ otherwise there exist $x^*, z^*$ with $x^* \in xw^x_1$ and $z^* \in zw^z_1$ such that $d(x^*,z^*)=L(C)/2$ and $\delta^*(x^*,y,z^*,w) \geq \delta^*(x,y,z,w)$. We have that $d(v'_x,v'_z)=d(x,z)=d(v''_x,v''_z)$ or $d(v'_x,v''_z)=d(x,z)=d(v''_x,v'_z)$ \emph{i.e.} cases $a$ and $b$ in  Figure \ref{fig:xz} coincide, in the same way cases $c$ and $d$ in  Figure \ref{fig:xz}.

Consider $\alpha= \vert (y,z)_w-(x,y)_w \vert$. If $\alpha \geq 1/4$ taking $x'=v'_x$ Lemma \ref{Prop:xx'} and Remark \ref{Rem:xx'} give 
$$\delta^*(x',y,z,w) \geq \min \{(x',y)_w,(y,z)_w \} -(x',z)_w = \delta^*(x,y,z,w),$$
so, by Proposition \ref{Th:1punto} there exists $z' \in J'(G)$ such that $\delta^*(x',y,z',w) \geq \delta^*(x,y,z,w)$.

If $\alpha=0$ we have $(y,z)_w=(x,y)_w$, \emph{i.e.} $d(y,v'_x)+d(v'_x,x)+ d(w,v'_z)+d(v'_z,z) =
d(x,v''_x) +d(v''_x,w)+ (d(y,v''_z)+d(v''_z,z)$. So  
\begin{equation} \label{eq:medmen}
\left \vert \left(  d(y,v'_x)+ d(w,v'_z) \right) -\left( d(v''_x,w)+ (d(y,v''_z)  \right) \right \vert = 2\left \vert d(x,v''_x)-d(x,v'_x) \right \vert .
\end{equation}
Since $d(y,v'_x)+ d(w,v'_z)$ and $d(v''_x,w)+ d(y,v''_z)$ 
 are multiples of $1/4$ their difference must also be a multiple of $1/4$, furthermore  $ \left \vert d(x,v''_x)-d(x,v'_x) \right \vert <1/4$ so equality  \ref{eq:medmen} must be $0$ o $1/4$. 
 Consider 
$d(y,v'_x)=k_1$, $ d(w,v'_z)=k_2$, $d(v''_x,w)=k_3$, $d(y,v''_z)=k_4$, $k_1+k_2=k_5$, $k_3+k_4=k_6$.  
Since $k_1+k_2+k_3+k_4+1/2$ 
must be integer, we have that $k_1+k_2+k_3+k_4$ 
must be an odd multiple of $1/2$, 
so $k_5$ and $k_6$ are both odd multiplies of $1/4$ %ya que si fueran ambos multiplos pares de 1/4 susuma seria un entero
so equality \ref{eq:medmen} must be $0$, \emph{i.e.}, $d(x,v'_x)=d(x,v''_x)=d(z,v'_z)=d(z,v''_z)=1/8$.  
Since $k_5$ and $k_6$ are odd multiples of $1/4$ then if $k_1$ is an even multiply of $1/4$ then $k_2$ must be an odd multiply. Similarly $k_3$ and $k_4$ also have different parity.
Note that if $v'_x \in J(G)$ (respectively $J'(G)\setminus J(G)$) then  $v''_x, \, v''_z \in J'(G) \setminus J(G)$ (respectively $J(G)$) and  $v'_z \in J(G)$ (respectively $J'(G) \setminus J(G)$), 
therefore $k_1$ and $k_3$ are odd and $k_2$ and $k_4$ are even or vice versa.
 Therefore if $y \in J(G)$ then $w \in J'(G) \setminus J(G)$ or whether $y \in J'(G) \setminus J(G)$ then $w \in J(G)$. 
Since we proceed in a similar way in both cases, let us assume that  $w \in  J'(G) \setminus J(G)$, By Proposicion \ref{Prop:ww'} and  Remark \ref{Rem:ww'} there exists $w' \in J(G)$ such that  $d(w,w')=1/4$ and $\delta^*(x,y,z,w') \geq \delta^*(x,y,z,w)$ and $(y,z)_{w'} - (x,y)_{w'}=1/4$. Taking $x'=v'_x$, $z'=v'_z$, we have $(x',z')_{w'}=(y,z')_{w'}$. Lemma \ref{Prop:xx'} and Remark  \ref{Rem:xx'} give  
$$
\delta^*(x',y,z',w') \geq \delta^*(x,y,z,w).
$$

If $0<\alpha<1/4$. We have two subcases. 

%%%%%%%%%%%%%%%%%%%%%%%%%%%%%%%%%%%%%%%%%%%%%%%%%%%%%%%%%%%%%%%%%%%%%%%%%%%%%%%%%%%%%%%%%%%%%%%%%%%%%%%%%
% NOTA (cambio marcado en rojo): el argumento de contradiccion original para el Caso i (0<alpha<1/4)
% dividia el desplazamiento entre x y z en alpha/2 cada uno; se reemplaza por una prueba constructiva
% directa que mueve unicamente x en alpha y reduce al caso alpha=0 ya probado. Se conserva la version
% anterior comentada abajo para referencia del equipo editor.
%
% \emph{Case i).} $d(v'_x,v'_z)=d(x,z)$ (case $a$ or $b$ in Figure \ref{fig:xz}).
% We take $x' \in [xy], \, z' \in [zw]$ such that  $d(x,x')=d(z,z')=\alpha/2$.  Lemma \ref{Prop:xx'} and Remark  \ref{Rem:xx'} give $\delta^*(x',y,z',w) \geq \min \{ (x',y)_w, (y,z')_w \} -(x',z')_w=(x,y)_w-(x,z)_w+ \alpha/2 > \delta^*(G)$
%  which is a contradiction, so this case is not possible.
%%%%%%%%%%%%%%%%%%%%%%%%%%%%%%%%%%%%%%%%%%%%%%%%%%%%%%%%%%%%%%%%%%%%%%%%%%%%%%%%%%%%%%%%%%%%%%%%%%%%%%%%%

\emph{Case i).} $d(v'_x,v'_z)=d(x,z)$ (case $a$ or $b$ in Figure \ref{fig:xz}). Consider $\max\{\alpha, d(x,v'_x)\}$.

If $\max\{\alpha, d(x,v'_x)\}=\alpha$ (\emph{i.e.} $d(x,v'_x) \leq \alpha$), then Lemma \ref{Prop:xx'}, applied with $x'=v'_x$ (since $d(x,x')=d(x,v'_x)\le \alpha$), gives directly
$$
\delta^*(v'_x,y,z,w) = \delta^*(x,y,z,w).
$$

If $\max\{\alpha, d(x,v'_x)\}=d(x,v'_x)$ (\emph{i.e.} $d(x,v'_x) > \alpha$), take $x' \in [xy]$ such that $d(x,x')=\alpha$. Since $x'$ lies strictly between $x$ and $v'_x$ along $[xy]$, it remains in the same configuration as $x$ (case $a$ or $b$), so
$$
d(x',y)=d(x,y)-d(x,x')=d(x,y)-\alpha,
$$
$$
d(x',w)=d(x,w)+d(x,x')=d(x,w)+\alpha,
$$
$$
d(x',z)=d(x,z)-d(x,x')=d(x,z)-\alpha.
$$
Hence
%\begin{align*}
%\delta^*(x',y,z,w) &= \frac{1}{2}\left(d(x',z)+d(y,w)-d(x',y)-d(z,w)\right) \\
%&= \frac{1}{2}\left((d(x,z)-\alpha)+d(y,w)-(d(x,y)-\alpha)-d(z,w)\right) \\
%&= \frac{1}{2}\left(d(x,z)+d(y,w)-d(x,y)-d(z,w)\right) \\
%&= (x,y)_w-(x,z)_w = \delta^*(x,y,z,w),
%\end{align*}
%and, denoting $\alpha' := (y,z)_{w}-(x',y)_{w}$,
%\begin{align*}
%\alpha' &= \frac{1}{2}\left(d(x',y)+d(z,w)-d(x',w)-d(y,z)\right) \\
%&= \frac{1}{2}\left((d(x,y)-\alpha)+d(z,w)-(d(x,w)+\alpha)-d(y,z)\right) \\
%&= \frac{1}{2}\left(d(x,y)+d(z,w)-d(x,w)-d(y,z)\right) - \alpha \\
%&= \alpha - \alpha = 0.
%\end{align*}
%This reduces to the case $\alpha=0$ already treated above.
\begin{align*}
(x',y)-(x',z) &= \frac{1}{2} \left(  d(y,w) -  d(x',y) - d(z,w) + d(x',z)  \right) \\
&=\frac{1}{2} \left( d(y,w)- d(x,y)+ \alpha  -d(z,w)+ d(x,z)- \alpha  \right) \\
&=(x,y)_w-(x,z)_w= \delta^*(G). \\
(y,z)_w-(x',z)_w&= \frac{1}{2} \left( d(y,w)- d(y,z)-d(x',w)+ d(x',z)  \right) \\
&= \frac{1}{2} \left(d(y,w) - d(y,z)- d(x,w)- \alpha + d(x,z)- \alpha  \right) \\
&= (y,z)_w-(x,z)_w- \alpha \\
&=(x,y)_w-(x,z)_w = \delta^*(G).
\end{align*}
This reduces to the case $\alpha=0$ already treated above.

 \emph{Case ii).}  $d(v'_x,v''_z)=d(x,z)$ (case $c$ or $d$ in Figure \ref{fig:xz}). We take $x^* \in [xv'_x], \, z^* \in [zv'_z]$ such that  $d(x,x^*)=d(z,z^*)=\alpha/2$. Since $d(x^*,z^*)=d(x,z)-d(x,x^*)-d(z,z^*)$, we have $(x^*,y)_w-(x^*,z^*)_w=(y,z^*)_w-(x^*,z^*)_w=(x,y)_w-(x,z)_w =\delta^*(x,y,z,w)$, therefore we have  $\delta^*(x^*,y,z^*,w)=\delta^*(x,y,z,w)$ with  $\alpha'= \vert (x^*,y)_w - (y,z^*)_w \vert=0$ and this case was previously considered.
 
\smallskip

For case $y,w \not \in J'(G)$, we proceed symmetrically to the previous case.
 \end{proof}

%teorema 3 puntos
\begin{proposition}\label{Th:3puntos}
Let $G$ be a graph and $x,y,z,w \in G$ such that $\dx = (x,y)_w - (x,z)_w$. 
%$\d^*(G)=min \left\{ (x,y)_w , (y,z)_w ) \right\} -(x, z )_w \geq 0 $.\\
If $ \left \vert \{x,y,z,w \} \cap J'(G) \right \vert =1$ then there exist $x',y',z',w' \in J'(G)$ such that $\delta^*(x',y',z',w') \geq \delta^*(x,y,z,w)$.
%If  $z \in J^{\prime}$ then there exist $x^{\prime},y^{\prime}, w^{\prime} \in J^{\prime}$ such that 
%$$
%\delta^*(G)=\delta ^*(x',y',z,w').
%$$
\end{proposition}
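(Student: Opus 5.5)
The plan is to reduce Proposition~\ref{Th:3puntos} to the two-point cases already handled. We move one of the three points outside $J'(G)$ onto $J'(G)$ without decreasing $\delta^*$, and then invoke Proposition~\ref{Th:2puntos1} or Proposition~\ref{Th:2puntos2}, according to which pair of points remains outside $J'(G)$. The single-point moves available are those in Cases i)--iv) of the proof of Proposition~\ref{Th:1punto}.

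Read carefully, those moves do not use the hypothesis that the other three points lie in $J'(G)$. Take Case i). If $x\notin J'(G)$, we choose $x'\in\{v'_x,v''_x\}$ on the side of $x$ away from $[xz]$. Proposition~\ref{Th:Pcuarto-Pgen} is then applied to the geodesic $[zx]$, with $x$ strictly inside a quarter-segment $[v'_xv''_x]$, and it gives $d(x',z)=d(x,z)+d(x,x')$. The other two relations, $d(x',y)\le d(x,y)+d(x,x')$ and $d(x',w)\le d(x,w)+d(x,x')$, are just the triangle inequality.

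The catch is that Proposition~\ref{Th:Pcuarto-Pgen} requires the far endpoint $v'$ to lie in $J'(G)$. Case i) therefore works whenever $z\in J'(G)$, regardless of where $y,w$ are. By the same reading, Case ii) (moving $z$) needs $x\in J'(G)$, Case iii) (moving $y$) needs $w\in J'(G)$, and Case iv) (moving $w$) needs $y\in J'(G)$.

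The argument then splits according to which point lies in $J'(G)$.
\begin{itemize}
\item If $z\in J'(G)$, Case i) moves $x$ into $J'(G)$. Now $x,z\in J'(G)$ and $y,w\notin J'(G)$, and Proposition~\ref{Th:2puntos2} finishes.
\item If $x\in J'(G)$, Case ii) moves $z$, and again Proposition~\ref{Th:2puntos2} applies.
\item If $w\in J'(G)$, Case iii) moves $y$. This leaves $y,w\in J'(G)$ with $x,z\notin J'(G)$, which is again Proposition~\ref{Th:2puntos2}.
\item If $y\in J'(G)$, Case iv) moves $w$, with the same conclusion.
\end{itemize}
Throughout, the inequalities in Proposition~\ref{Th:1punto} keep the normalized form $\delta^*=(x,y)_w-(x,z)_w$ after the move, with $\min$ still attained by the $(x,y)$ term or bounded below appropriately. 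Each chain therefore yields $x',y',z',w'\in J'(G)$ with $\delta^*(x',y',z',w')\ge\delta^*(x,y,z,w)$.

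The main obstacle is that after a move the maximum defining $\delta^*$ of the new quadruple may be attained by a different ordering. In that case the ``right'' pair for Proposition~\ref{Th:2puntos2} might not be $\{x,z\}$ or $\{y,w\}$ relative to the new labelling. I would handle this by relabelling: choose the term that attains $\delta^*(x',y,z,w)$, rename the points so that it reads $(x,y)_w-(x,z)_w$, and apply whichever of Propositions~\ref{Th:2puntos1} or \ref{Th:2puntos2} fits the pair still outside $J'(G)$. Since these two propositions together cover all six pairs, every case closes.

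I would also check the edge-orientation conventions from Figure~\ref{fig:base}, namely $v_0^x\in[xy]$ and the like. Where they fail, Proposition~\ref{Prop:vinxyxw} lets us replace the offending point by a vertex, which lies in $J'(G)$, and this only helps.
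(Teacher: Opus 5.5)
Your proposal is correct and follows the paper's proof. For $z\in J'(G)$ the paper moves $x$ to a quarter-point exactly as in Case i) of Proposition~\ref{Th:1punto}, using Proposition~\ref{Th:Pcuarto-Pgen} with $z$ as the far point, then finishes with Proposition~\ref{Th:2puntos2} and calls the other three cases similar; your pairing (move $z$ when $x\in J'(G)$, $y$ when $w\in J'(G)$, $w$ when $y\in J'(G)$) spells those out, and your relabelling step makes precise a normalization the paper leaves implicit when it cites Propositions~\ref{Th:2puntos1} and \ref{Th:2puntos2} after the move.
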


\begin{proof}
Suppose that $z \in J'(G)$. 
Note that to prove this result it suffices to show that there exists $x\pr \in J\pr$ such that $\delta^*(x',y,z,w) \geq \delta^*(x,y,z,w)$ and Propositions \ref{Th:2puntos1} and \ref{Th:2puntos2} give the result. 

Since $x\notin J\pr $ we consider that $x \in v_0v_1 \in E(G)$. Let $v'_x, \, v''_x \in J'(v_0v_1)$ be such that $d(v_0,v'_x) < d(v_0,x)< d(v_0,v''_x)$ and  $d(v'_x, \, v''_x)=1/4$. 
 We assume $v_0, \, v'_x \in [xy]$.
If $v'_x \in [xz]$ we take $x'=v''_x$; otherwise, we take $x'=v'_x$. By Proposition \ref{Th:Pcuarto-Pgen} we have $d(x',z)=d(x,z)+d(x',x)$, $d(x',y) \leq d(x,y)+d(x,x')$, $d(x',w) \leq d(x,w)+d(x,x')$. So, we have 
\begin{align*}
(x',y)_w-(x',z)_w &= \frac{1}{2} \left( d(x',w)+ d(y,w)-d(x',y)-d(x',w)-d(z,w)+d(x',z) \right) \\
&\geq  \frac{1}{2} \left( d(y,w)-\left( d(x,y)+d(x,x') \right) -d(z,w)+ \left( d(x,z)+d(x,x') \right) \right) \\
 &=(x,y)_w-(x,z)_w .\\
 (y,z)_w-(x',z)_w &= \frac{1}{2} \left( d(y,w)+d(z,w)-d(y,z)-d(x',w)-d(z,w)+d(x',z) \right) \\
 & \geq \frac{1}{2} \left(  d(y,w)-d(y,z)-\left( d(x,w)+d(x,x') \right)+ \left( d(x,z) +d(x,x') \right) \right) \\
 &=  (y,z)_w-(x,z)_w \\
 &\geq (x,y)_w-(x,z)_w .
\end{align*}
So, we have
$$ \delta^*(x',y,z,w) \geq \min \{ (x',y)_w,(y,z)_w\}-(x',z)_w \geq (x,y)_w-(x,z)_w = \delta^*(x,y,z,w).$$
Since $\delta^*(x',y,z,w) \geq \delta^*(x,y,z,w)$, by Proposition \ref{Th:2puntos2} there exist $y', w' \in J'(G)$ such that
$$\delta^*(x',y',z,w') \geq \delta^*(x',y,z,w) \geq \delta^*(x,y,z,w). $$ 
\smallskip
The proof for the cases $x \in J'(G)$, $y \in J'(G)$ or $w \in J'(G)$ follows a similar way.
\end{proof}

\begin{proposition}\label{Th:4puntos}
Let $G$ be a graph and $x,y,z,w \in G$ such that $\dx=(x,y)_w-(x,z)_w$.
If $x,y,z,w \notin J^{\prime}$, then there exist $x^{\prime},y^{\prime},z^{\prime}, w^{\prime} \in J^{\prime}$ such that 
$$
\delta ^*(x',y',z',w') \geq \d^*(x,y,z,w).
$$
\end{proposition}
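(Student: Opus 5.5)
The plan is to reduce Proposition \ref{Th:4puntos} to Proposition \ref{Th:3puntos}, exactly as Proposition \ref{Th:3puntos} was reduced to Propositions \ref{Th:2puntos1} and \ref{Th:2puntos2}. It therefore suffices to find a single point $x' \in J'(G)$ with $\delta^*(x',y,z,w) \ge \delta^*(x,y,z,w)$. We do not need $\delta^*(x',y,z,w)$ to be attained by the same term $(x',y)_w-(x',z)_w$. It is enough to exhibit that term with value at least $(x,y)_w-(x,z)_w$, since any term is bounded above by the maximum defining $\delta^*$.

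\textbf{Step 1 (normalization).} Let $x \in v_0v_1 \in E(G)$ with $v_0 \in [xy]$. If $v_0 \in [xw]$ as well, Proposition \ref{Prop:vinxyxw} with $x^*=v_0$ gives $\delta^*(v_0,y,z,w)\ge\delta^*(x,y,z,w)$ and $v_0\in V(G)\subset J'(G)$, so we are done. Hence we may assume the configuration of Figure \ref{fig:base}, with $v_1\in[xw]$. The analogous normalizations for $y,z,w$ are made the same way whenever Proposition \ref{Prop:vinxyxw} applies.

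\textbf{Step 2 (moving $x$ to a quarter point).} Let $v'_x,v''_x\in J'(v_0v_1)$ be the endpoints of the quarter segment containing $x$, with $v'_x$ on the side of $v_0$. Choose $x'=v''_x$ if $v'_x\in[xz]$, and $x'=v'_x$ otherwise. The chosen point lies on the side of $x$ away from $z$, so Proposition \ref{Th:Pcuarto-Pgen} should give $d(x',z)=d(x,z)+d(x,x')$. The triangle inequality gives $d(x',y)\le d(x,y)+d(x,x')$ and $d(x',w)\le d(x,w)+d(x,x')$. The computation already carried out in Case i) of Proposition \ref{Th:1punto} then yields
\[
(x',y)_w-(x',z)_w\ge (x,y)_w-(x,z)_w,\qquad (y,z)_w-(x',z)_w\ge (y,z)_w-(x,z)_w\ge (x,y)_w-(x,z)_w,
\]
so $\delta^*(x',y,z,w)\ge\delta^*(x,y,z,w)$. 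Now $x'\in J'(G)$ while $y,z,w\notin J'(G)$, and we keep the same term $(x',y)_w-(x',z)_w$, so Proposition \ref{Th:3puntos} applies and provides $y',z',w'\in J'(G)$.

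\textbf{Main obstacle.} The delicate point is the use of Proposition \ref{Th:Pcuarto-Pgen}. That proposition is stated for a base point $v'\in J'(G)$, but here the other endpoint is $z$, and $z\notin J'(G)$. So the equality $d(x',z)=d(x,z)+d(x,x')$ is not automatic: $[xz]$ could leave the edge through $v_1$ while $x$ sits at the midpoint of the cycle formed by the two routes.

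To handle this, first try to project $z$ to a quarter point $z'$ chosen by the rule of Case ii) of Proposition \ref{Th:1punto}. Then apply the two–point analysis of Lemma \ref{Th:2pts-2pts} and Proposition \ref{Th:2puntos2} to the pair $\{x,z\}$, treating $y,w$ as fixed. This means splitting according to whether $d(x,z)$ is smaller or larger than the distance between the outer quarter points.
\begin{itemize}
\item If $d(x,z)$ is smaller (Case 1 there), the concatenation through $x$ and $z$ is geodesic, and the point $x'$ works.
\item If $d(x,z)$ is larger (Case 2), $x$ and $z$ lie on a cycle with $d(x,z)=L(C)/2$. The parameter $\alpha=(y,z)_w-(x,y)_w$ is then treated as before: when $\alpha\ge 1/4$, use Lemma \ref{Prop:xx'}; when $0<\alpha<1/4$, slide $x$ by $\alpha$ to reduce to $\alpha=0$. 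In the case $\alpha=0$, the parity argument on multiples of $1/4$ together with Proposition \ref{Prop:ww'} moves $w$ to a point of $J(G)$.
\end{itemize}
In every branch at least one of $x,z$ lands in $J'(G)$ without decreasing $\delta^*$, which again reduces the claim to Proposition \ref{Th:3puntos}.
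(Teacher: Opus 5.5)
\textbf{There is a genuine gap: the step that puts the first point into $J'(G)$ is not established, and that step is the whole content of the proposition.} The reduction to Proposition~\ref{Th:3puntos} is fine as a framework: relabeling so that the maximizing term reads $(x',y)_w-(x',z)_w$ does not change how many points lie in $J'(G)$. Step~1 is also fine. The problem is the obstacle you flag yourself. Proposition~\ref{Th:Pcuarto-Pgen} works because, for $z\in J'(G)$, the function $d(z,\cdot)$ restricted to an edge can only switch from increasing to decreasing at a quarter point. For $z\notin J'(G)$ this turning point can lie strictly between $x$ and $v''_x$. Then $v'_x\in[xz]$, your rule picks $x'=v''_x$, and $d(v''_x,z)<d(x,z)+d(x,v''_x)$. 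Hence $(x',y)_w-(x',z)_w$ can be strictly smaller than $(x,y)_w-(x,z)_w$. The other choice, $x'=v'_x$, preserves the first difference but lets $(y,z)_w-(x',z)_w$ drop by as much as $d(x,v'_x)$. That drop is harmless only when $\alpha\ge d(x,v'_x)$, which is Lemma~\ref{Prop:xx'}. So when $\alpha$ is small, no move of $x$ alone to $J'(G)$ is guaranteed to work.

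Your fallback does not cover this situation. Case~1 of the pair analysis and the branch $\alpha\ge 1/4$ do go through for arbitrary $y,w$. In Case~2, however, the branch $0<\alpha<1/4$ is reduced to $\alpha=0$, and the $\alpha=0$ argument of Proposition~\ref{Th:2puntos2} relies on three things:
\begin{itemize}
\item $d(y,v'_x)+d(w,v'_z)$ and $d(v''_x,w)+d(y,v''_z)$ being multiples of $1/4$;
\item the $J(G)$ versus $J'(G)\setminus J(G)$ parity of $y$ and $w$;
\item Proposition~\ref{Prop:ww'}, which needs $d(y,w)\in\mathbb{N}+\{1/4,3/4\}$ and moves $w$ by exactly $1/4$.
\end{itemize}
All of these require $y,w\in J'(G)$. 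If $w\notin J'(G)$, no point at distance $1/4$ from $w$ on its edge lies in $J'(G)$, so ``moving $w$ to a point of $J(G)$'' cannot happen. Your claim that in every branch one of $x,z$ lands in $J'(G)$ is therefore unsupported in exactly the branch where single-point moves fail.

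The paper avoids single-point moves. It slides all four points by a common amount around the quadrilateral: $x$ toward $w$, $w$ toward $z$, $z$ toward $y$, $y$ toward $x$ by $\varepsilon_1$, or all in the opposite sense by $\varepsilon_2$. Then none of $d(x,y),d(y,z),d(z,w),d(w,x)$ increases, so both differences are controlled by the diagonal sum $d(x,z)+d(y,w)$ alone. The point realizing $\varepsilon_1$ (or $\varepsilon_2$) lands in $J'(G)$, and the earlier propositions finish the proof. That simultaneous move is the missing idea. Be aware that the paper only asserts, without argument, that one of the two slides does not decrease $d(x,z)+d(y,w)$. This sum is in general a minimum of affine functions of the displacement, hence concave, so that step would also need a justification.
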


\begin{proof}
Suppose that $\d^*(x,y,z,w)=(x,y)_w-(x,z)_w$, $x \in v_0^xv_1^x$, $y \in v_0^yv_1^y$, $z \in v_0^zv_1^z$ and $w \in v_0^wv_1^w$, where  $v_0^xv_1^x, \, v_0^yv_1^y, \, v_0^zv_1^z, \,v_0^wv_1^w \in E(G)$.  Denote by  $v'_xv''_x, \,v'_yv''_y, \, v'_zv''_z, \, v'_wv''_w,$ the quarter segment of the edges where  $x$, $y$, $z$ and $w$ are contained respectively, \emph{i.e.}, $d(v^x_0,v'_x)<d(v^x_0,x)<d(v^x_0,v''_x)$; $d(v^y_0,v'_y)<d(v^y_0,y)<d(v^y_0,v''_y)$; $d(v^z_0,v'_z)<d(v^z_0,z)<d(v^z_0,v''_z)$ and  $d(v^w_0,v'_w)<d(v^w_0,w)<d(v^w_0,v''_w)$ with $v'_x,v''_x,v'_y,v''_y,v'_z,v''_z,v'_w,v''_w \in J'(G)$ and  $d(v'_x,v''_x)=d(v'_y,v''_y)=d(v'_z,v''_z)=d(v'_w,v''_w)=\frac{1}{4}$.\\
Without loss of generality, we can assume that $v'_x, v'_y \in [ xy ]$ and $v'_z, v'_w \in [ zw ]$.  By Proposition \ref{Prop:vinxyxw} we have that $v''_x,v''_w \in [xw]$ and $v''_y,v''_z \in [yz]$ (see Figure \ref{fig:4pts}).

%NO BORARR LO SIGUIENTE, PUEDE SER UTIL Y ES UNA EXPLICACION D ELO ANTERIOR
%If $v'_x \in [ xy ] \cap [ x w ] $ by Proposition \ref{Prop:vinxyxw} we have that $\d ^* (v''_x,y,z,w) \geq \d ^* (G)$, by Proposition \ref{Th:3puntos} there exist $y',z',w' \in J'(G)$ such that $\d^* (v''_x,y',z',w')=\d^*(G)$ obtaining the desired result, so we consider that $v''_x \in [xw]$. 
%If $v'_w \in [zw] \cap [xw]$ by Remark \ref{Rem:vinxyxw} we have that $\d^* (x,y,z,v''_w)\geq \d ^*(G)$ and by Proposition \ref{Th:3puntos} there exist $x',y',z' \in J'(G)$ such that $\d ^* (x',y',z',v''_w)=\d^*(G)$, so we consider that $v''_w \in [xw]$. 
%If $v'_z \in [zw]\cap [yz] $ by Remark \ref{Rem:vinxyxw} we have that $\d^* (x,y,v''_z,w)\geq \d ^*(G)$ and by Proposition \ref{Th:3puntos} there exist $x',y',w' \in J'(G)$ such that $\d ^* (x',y',v''_z,w')=\d^*(G)$, so we consider that $v''_z \in [yz]$. 
%If  $v'_y \in [xy]\cap [yz] $ by Remark \ref{Rem:vinxyxw} we have that $\d^* (x,v''_y,z,w)\geq \d ^*(G)$ and by Proposition \ref{Th:3puntos} there exist $x',z',w' \in J'(G)$ such that $\d ^* (x',v''_y,z',w')=\d^*(G)$, so we consider that $v''_y \in [yz]$.\\
We define
\begin{align*}\label{epsilons}
\e_1:=&min \{ d(x,v''_x),d(y,v'_y),d(z,v''_z),d(w,v'_w) \};\\
\e_2:=&min \{ d(x,v'_x),d(y,v''_y),d(z,v'_z),d(w,v''_w) \}.
\end{align*}
Taking $x' \in [ xv''_x ]$, $x'' \in [ xv'_x ]$, $y' \in [ yv'_y ] $, $y'' \in [ yv''_y ] $, $z' \in [ zv''_z ] $, $z'' \in [ zv'_z ] $,  $ w' \in [ wv'_w] $ and $ w'' \in [ w v''_w] $ such that
\begin{align*}
&d(x,x')=d(y,y')=d(z,z')=d(w,w')=\e _1; \\
&d(x,x'')=d(y,y'')=d(z,z'')=d(w,w'')=\e _2.
\end{align*}
By construction we have 
\begin{align*}
&d(x'',y'')=d(x',y')=d(x,y);\\
&d(x'',w'')=d(x',w')=d(x,w);\\
&d(y'',z'')=d(y',z')=d(y,z);\\
&d(z'',w'')=d(z',w')=d(z,w).\\
\end{align*}
Note that $\max \{ d(x'',z'')+d(y'',w''),d(x',z')+d(y',w') \} \geq d(x,z)+d(y,w)$. 
 If $\max \{ d(x'',z'')+d(y'',w''),d(x',z')+d(y',w') \}=d(x',z')+d(y',w')$, then  
\begin{align*}
(x',y')_{w'}-(x',z')_{w'}&= \frac{1}{2} \left( d(x',w')+d(y',w')-d(x',y')-d(x',w')-d(z',w')+d(x',z') \right)\\
&= \frac{1}{2} \left( d(x',z')+d(y',w')-d(x',y')-d(z',w') \right)\\
&\geq \frac{1}{2} \left( d(x,z)+d(y,w)-d(x',y')-d(z',w') \right) \\
&= \frac{1}{2} \left( d(x,z)+d(y,w)-d(x,y)-d(z,w) \right)\\
&=(x,y)_{w}-(x,z)_{w}. \\
(y',z')_{w'}-(x',z')_{w'}&= \frac{1}{2} \left( d(y',w')+d(z',w')-d(y',z')-d(x',w')-d(z',w')+d(x',z') \right)\\
&= \frac{1}{2} \left( d(x',z')+d(y',w')-d(x',w')-d(y',z') \right)\\
&\geq \frac{1}{2} \left( d(x,z)+d(y,w)-d(x',w')-d(y',z') \right) \\
&= \frac{1}{2} \left( d(x,z)+d(y,w)-d(x,w)-d(y,z) \right)\\
&=(y,z)_{w}-(x,z)_{w} \\
&\geq (x,y)_{w}-(x,z)_{w}.
\end{align*}
Therefore, $\delta^*(x',y',z',w') \geq \delta^*(x,y,z,w)$. \\
 If $\max \{ d(x'',z'') + d(y'',w''), d(x',z') + d(y',w') \} = d(x'',z'') + d(y'',w'')$, 
then a similar argument yields $\delta^*(x'',y'',z'',w'') \geq \delta^*(x,y,z,w)$.
Moreover, in each case, every element of the set $\{x', x'', y', y'', z', z'', w', w''\}$ belongs to $J'(G)$. Consequently, Propositions~\ref{Th:1punto}, \ref{Th:2puntos1}, \ref{Th:2puntos2}, and \ref{Th:3puntos} guarantee the existence of the remaining elements in $J'(G)$, which completes the proof.
\end{proof}

%Figura 2
\begin{figure}[htb]
\centering
\includegraphics[width=0.5\textwidth]{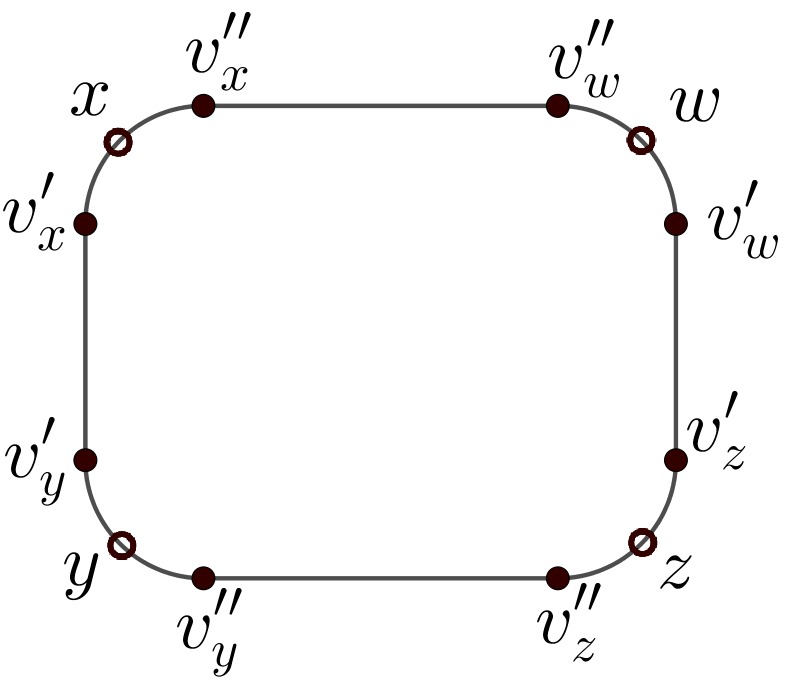}
\caption{Approximation of $x,y,z,w$ to the quarter points.}
\label{fig:4pts}
\end{figure}

%%%%%%%%%%%%%%%%%%%%%%%%%%%%%%%%%%%%%%%%%%%%%%%%%%%%%%%%%%%%%%%%%%%%%%%%%%%%%%%%%%%%%%%%%%%%%%%%%%%%%
%%%%%%%%%%%%%%%%%%%%%%%%%%%%%%%%%%%%%%%%%%%%%%%%%%%%%%%%%%%%%%%%%%%%%%%%%%%%%%%%%%%5555
%%%%%%%%%%%%%%%%%%%%%%%%%%%%%%%%%%%%%%%%%%%%%%%%%%%%%%%%%%%%%%%%%%%%%%%%%%%%%%%%%%%%%%%%%%%%%%%%%%%%%%5

%It should be noted that a natural to extend the result to infinite graphs; in this regard, we have proven the following.

%\begin{lemma}\label{lem:quantization}
%Let $G$ be any graph and let $x,y,z,w \in J'(G)$. Then $\delta^*(x,y,z,w) \in \tfrac18 \mathbb{Z}$.
%\end{lemma}
%\begin{proof}
%By construction, any $u \in J'(G)$ lies at one of the positions $0, \tfrac14, \tfrac12, \tfrac34$ along some edge of $G$. Consequently, for $u,v \in J'(G)$, the distance $d(u,v)$ -- realized as a minimum over finitely many candidate routes through the (at most four) endpoints of the edges containing $u$ and $v$, each contributing an integer (vertex-to-vertex) distance plus quarter-integer offsets at $u$ and $v$ -- is itself a multiple of $\tfrac14$. Hence $d(u,v) \in \tfrac14\mathbb{Z}$ for all $u,v \in J'(G)$.
%
%For $x,y,z,w \in J'(G)$, the Gromov product $(x,y)_w = \tfrac12\bigl(d(x,w)+d(y,w)-d(x,y)\bigr)$ is therefore a half of an element of $\tfrac14\mathbb{Z}$, i.e., $(x,y)_w \in \tfrac18\mathbb{Z}$; the same holds for every Gromov product appearing in the definition of $\delta^*(x,y,z,w)$. Since $\tfrac18\mathbb{Z}$ is closed under subtraction and under taking the minimum or maximum of finitely many of its elements, $\delta^*(x,y,z,w) \in \tfrac18\mathbb{Z}$.
%\end{proof}

\begin{proposition}\label{prop:Jprime}
Let $G$ be any graph and %set
$$
\delta^*_{J'}(G) := \sup\{\delta^*(x,y,z,w) : x,y,z,w \in J'(G)\}.
$$
If $\delta^*_{J'}(G) < \infty$, then there exist $x,y,z,w \in J'(G)$ such that $\delta^*_{J'}(G) = \delta^*(x,y,z,w)$.
\end{proposition}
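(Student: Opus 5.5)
The plan is a discreteness argument. I will show that the set of values $\{\delta^*(x,y,z,w): x,y,z,w\in J'(G)\}$ lies in the lattice $\tfrac18\mathbb{Z}$. A bounded-above subset of a lattice always contains its supremum, so the supremum $\delta^*_{J'}(G)$, which is finite by hypothesis, is attained.

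\emph{Step 1: distances between points of $J'(G)$ lie in $\tfrac14\mathbb{Z}\cup\{\infty\}$.} Each $u\in J'(G)$ sits at position $0,\tfrac14,\tfrac12$ or $\tfrac34$ along an edge. Take $u,v\in J'(G)$ in the same connected component.
\begin{itemize}
\item If $u,v$ lie on a common edge, the direct route along that edge has length in $\tfrac14\mathbb{Z}$.
\item Any other route leaves the edge of $u$ through one of its endpoints and enters the edge of $v$ through one of its endpoints. Its length is therefore $a+d(p,q)+b$, where $p$ is an endpoint of the edge containing $u$, $q$ an endpoint of the edge containing $v$, $a=d(u,p)$, $b=d(q,v)$, and $a,b\in\{0,\tfrac14,\tfrac12,\tfrac34,1\}$.
\item $d(p,q)$ is a graph distance between vertices, hence an integer.
\item So $d(u,v)$ is the minimum over at most five candidate values, all in $\tfrac14\mathbb{Z}$. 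In particular the minimum exists even in an infinite graph, and $d(u,v)\in\tfrac14\mathbb{Z}$.
\end{itemize}
If $u,v$ lie in different components, $d(u,v)=\infty$ by the paper's convention.

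\emph{Step 2: the values of $\delta^*$ on $J'(G)^4$ lie in $\tfrac18\mathbb{Z}$.} Suppose $x,y,z,w\in J'(G)$ lie in one component. Each Gromov product $(v_1,v_2)_{v_4}$ is half a sum of elements of $\tfrac14\mathbb{Z}$, hence lies in $\tfrac18\mathbb{Z}$. Then $\delta^*(x,y,z,w)$ is a maximum of finitely many differences of minima of such numbers, so it also lies in $\tfrac18\mathbb{Z}$.

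\emph{Step 3: quadruples meeting several components.} Here I would either adopt the convention that such quadruples contribute nothing, consistent with $\delta(X):=\sup_i\delta(X_i)$ over components, or check that the formula gives a value that does not affect the supremum. I expect this to be the only delicate point, and only because of the $\infty$ arithmetic, not for any geometric reason. The cleanest route is to restrict the supremum to quadruples in a single component, as the paper does for $\delta(X)$.

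\emph{Step 4: conclusion.} Set $S=\{\delta^*(x,y,z,w): x,y,z,w\in J'(G)\}$.
\begin{itemize}
\item By Steps 2 and 3, $S\subseteq\tfrac18\mathbb{Z}$.
\item $S$ is nonempty, since $V\neq\emptyset$.
\item $S$ is bounded above by the hypothesis $\delta^*_{J'}(G)<\infty$.
\end{itemize}
Let $s=\sup S$. Some element $t\in S$ satisfies $t>s-\tfrac18$. Because $S\subseteq\tfrac18\mathbb{Z}$, there are only finitely many elements of $\tfrac18\mathbb{Z}$ in $(s-\tfrac18,s]$, in fact at most one, so $t$ equals the largest element of $S$ and $t=s$. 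Hence $s$ is attained by some $(x,y,z,w)\in J'(G)^4$.

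The main potential obstacle is Step 1 in infinite graphs, namely that the infimum defining $d(u,v)$ is actually a minimum. This is handled by reducing to vertex-to-vertex graph distances, which are minima of integers and hence attained.
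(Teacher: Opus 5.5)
Your proposal is correct and follows the paper's proof. Both show that every value $\delta^*(x,y,z,w)$ with $x,y,z,w\in J'(G)$ lies in $\tfrac18\mathbb{Z}$, and then use that a nonempty, bounded-above subset of this lattice contains its supremum; the paper phrases this as taking the maximum of the integer set $\{8\,\delta^*(x,y,z,w)\}$, which is exactly your Step 4. Your Steps 1 and 3 make explicit what the paper leaves implicit: why distances between points of $J'(G)$ lie in $\tfrac14\mathbb{Z}$ and are attained even in infinite graphs, and how to treat quadruples that meet different components.
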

\begin{proof}
Note that $\delta^*(x,y,z,w) \in \tfrac18 \mathbb{Z}$, since ,  any $u \in J'(G)$ lies at one of the positions $0, \tfrac14, \tfrac12, \tfrac34$ along some edge of $G$. Consequently, for $u,v \in J'(G)$, the distance $d(u,v)$ is itself a multiple of $\tfrac14$. %Hence $d(u,v) \in \tfrac14\mathbb{Z}$ for all $u,v \in J'(G)$.
Consider the set
$$
\mathcal{H} := \{\, 8\,\delta^*(x,y,z,w) \,:\, x,y,z,w \in J'(G) \,\}.
$$
%By Lemma~\ref{lem:quantization}, 
 $\mathcal{H} \subseteq \mathbb{Z}$; it is nonempty, since $J'(G) \neq \emptyset$, and it is bounded above by $8\,\delta^*_{J'}(G) < \infty$. Every nonempty subset of $\mathbb{Z}$ bounded above has a maximum, and since $\mathcal{H}$ is precisely the image of the map $(x,y,z,w) \mapsto 8\,\delta^*(x,y,z,w)$ on $J'(G)^4$, that maximum is realized by some quadruple $x_0,y_0,z_0,w_0 \in J'(G)$: $\max \mathcal{H} = 8\,\delta^*(x_0,y_0,z_0,w_0)$. Since
$$
8\,\delta^*_{J'}(G) = 8 \sup_{x,y,z,w \in J'(G)} \delta^*(x,y,z,w) = \sup \mathcal{H} = \max \mathcal{H} = 8\,\delta^*(x_0,y_0,z_0,w_0),
$$
dividing by $8$ gives $\delta^*_{J'}(G) = \delta^*(x_0,y_0,z_0,w_0)$.
\end{proof}

%%%%%%%%%%%%%%%%%%%%%%%%%%%%%%%%%%%%%%%%%%%%%%%%%%%%%%%%%%%%%%%%%%%%%%%%%%%%%%%%%%%%%%%%%%%%%%%%%%%%%%%%%
%%%%%%%%%%%%%%%%%%%%%%%%%%%%%%%%%%%%%%%%%%%%%%%%%%%%%%%%%%%%%%%%%%%%%%%%%%%%%%%%%%%%%%%%%%%%%%%%%%%%%%%%%

Propositions \ref{Th:1punto}, \ref{Th:2puntos1}, \ref{Th:2puntos2}, \ref{Th:3puntos}, \ref{Th:4puntos}, \ref{prop:Jprime} give the following result.

\begin{theorem}\label{PuntosCuartos}
For any graph $G$ (finite or infinite),
$$ \delta^*_{J'}(G) := \sup \left\{ \delta^*(x',y',z',w') : x',y',z',w' \in J'(G) \right\} = \delta^*(G). $$
\end{theorem}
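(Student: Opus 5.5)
The inequality $\delta^*_{J'}(G)\le \delta^*(G)$ is immediate, since $J'(G)\subseteq G$ and both quantities are suprema of the same function $\delta^*$ over nested sets of quadruples. All the work is in the reverse inequality $\delta^*(G)\le\delta^*_{J'}(G)$. I would prove it pointwise: for every quadruple $(x,y,z,w)\in G^4$ I will produce $(x',y',z',w')\in J'(G)^4$ with $\delta^*(x',y',z',w')\ge\delta^*(x,y,z,w)$. Taking suprema then gives the claim.

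This route is important because it never uses attainment of $\delta^*(G)$. Theorem \ref{th:finitos} only gives attainment for finite graphs, and for infinite graphs the supremum may fail to be attained or may even be infinite. The pointwise comparison handles all these situations at once. If $\delta^*(G)=\infty$, it forces $\delta^*_{J'}(G)=\infty$ as well, so the equality holds in $[0,\infty]$.

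Fix $x,y,z,w\in G$. If some two of them lie in different connected components, the distances are infinite, so I would restrict to quadruples in a single component, as the paper's convention $\delta(X)=\sup_i\delta(X_i)$ suggests. After relabeling the four points (the maximum defining $\delta^*$ ranges over all role assignments), I may assume $\delta^*(x,y,z,w)=(x,y)_w-(x,z)_w$. This is exactly the normalization every previous proposition assumes. I then split according to $m=|\{x,y,z,w\}\cap J'(G)|$.
\begin{itemize}
\item If $m=4$, there is nothing to do.
\item If $m=3$, Proposition \ref{Th:1punto} replaces the single bad point.
\item If $m=2$, the two missing points form one of the six pairs. Proposition \ref{Th:2puntos1} covers $\{x,y\},\{x,w\},\{y,z\},\{z,w\}$, and Proposition \ref{Th:2puntos2} covers $\{x,z\}$ and $\{y,w\}$.
\item If $m=1$, Proposition \ref{Th:3puntos} applies.
\item If $m=0$, Proposition \ref{Th:4puntos} applies.
\end{itemize}
In every case I obtain a quadruple in $J'(G)^4$ whose $\delta^*$ is at least $\delta^*(x,y,z,w)$, as required.

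The delicate step is hidden in the chaining. When one of these propositions moves a point into $J'(G)$ and then invokes another proposition, the new quadruple may attain its maximum with a different term than $(x,y)_{w}-(x,z)_{w}$. The later propositions assume the normalization $\delta^*(\cdot)=(x,y)_w-(x,z)_w$, so I would check that re-normalizing is legitimate. The fix is a relabeling: if the new quadruple attains its maximum with another term, rename its points so that this term takes the form $(x,y)_w-(x,z)_w$. The count of points in $J'(G)$ has strictly increased and does not depend on the labeling, so an induction on $4-m$ closes the argument.

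Proposition \ref{prop:Jprime} then upgrades the result: when $\delta^*(G)<\infty$, the common value $\delta^*_{J'}(G)=\delta^*(G)$ is attained by a quadruple in $J'(G)$. This is a bonus and is not needed for the stated equality. The main obstacle is therefore the bookkeeping of the normalization across the reductions, and I would handle it explicitly by this induction on the number of points outside $J'(G)$.
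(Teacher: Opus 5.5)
Your proposal is correct and is essentially the paper's argument: the paper also relies on the pointwise comparison supplied by Propositions \ref{Th:1punto}, \ref{Th:2puntos1}, \ref{Th:2puntos2}, \ref{Th:3puntos} and \ref{Th:4puntos}, phrased through a single quadruple with $\delta^*(x,y,z,w)>\delta^*(G)-\varepsilon$. Your direct passage to suprema has two small advantages: it also covers $\delta^*(G)=\infty$, where $\delta^*(G)-\varepsilon$ is meaningless, and your relabeling together with induction on the number of points outside $J'(G)$ makes explicit the re-normalization that the paper uses tacitly when one proposition hands off to another.
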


\begin{proof}
The inequality $\delta^*_{J'}(G) \le \delta^*(G)$ is immediate, since $J'(G) \subseteq G$. For the reverse inequality, let $\varepsilon>0$ be arbitrary. By the definition of supremum, there exist $x,y,z,w \in G$ such that
$$
\delta^*(x,y,z,w) > \delta^*(G) - \varepsilon.
$$
By Propositions \ref{Th:1punto}, \ref{Th:2puntos1}, \ref{Th:2puntos2}, \ref{Th:3puntos}, \ref{Th:4puntos}, there exist $x',y',z',w' \in J'(G)$ satisfying
$$
\delta^*(x',y',z',w') \ge \delta^*(x,y,z,w) > \delta^*(G) - \varepsilon.
$$
Consequently, $\delta^*_{J'}(G) > \delta^*(G) - \varepsilon$. Since this holds for every $\varepsilon>0$, letting $\varepsilon \to 0^+$ yields $\delta^*_{J'}(G) \ge \delta^*(G)$, which, together with the reverse inequality above, completes the proof.
\end{proof}

%%%%%%%%%%%%%%%%%%%%%%%%%%%%%%%%%%%%%%%%%%%%%%%%%%%%%%%%%%%%%%%%%%%%%%%%%%%%%%%%%%%%%%%%%%%%%%%%%%%%%%%%%%%%%%%%%%%%%%%%

Theorem~\ref{PuntosCuartos} and Proposition~\ref{prop:Jprime} give the following result.

\begin{proposition}\label{prop:attainment-final}
Let $G$ be a graph (finite or infinite) with $\delta^*(G)<\infty$. Then there exist $x,y,z,w \in J'(G)$ such that
$$\delta^*(G)= \delta^*(x,y,z,w).$$
\end{proposition}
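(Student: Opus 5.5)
The plan is to combine Theorem~\ref{PuntosCuartos} with Proposition~\ref{prop:Jprime}, and the only work is checking that the hypotheses line up. Since $\delta^*(G)<\infty$ by assumption, Theorem~\ref{PuntosCuartos} gives
$$\delta^*_{J'}(G)=\delta^*(G)<\infty .$$
In particular $\delta^*_{J'}(G)$ is finite. This is exactly the hypothesis of Proposition~\ref{prop:Jprime}, so that proposition produces $x,y,z,w\in J'(G)$ with $\delta^*_{J'}(G)=\delta^*(x,y,z,w)$. Substituting $\delta^*_{J'}(G)=\delta^*(G)$ then yields $\delta^*(G)=\delta^*(x,y,z,w)$, which is the claim.

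The one point I would check explicitly is that the supremum in Proposition~\ref{prop:Jprime} is taken over a nonempty set, so that the maximum it produces is meaningful. This holds because $V\neq\emptyset$ and $V\subseteq J'(G)$.

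I would also verify the discreteness fact underlying Proposition~\ref{prop:Jprime}, namely that distances between points of $J'(G)$ lie in $\tfrac14\mathbb{Z}$. Within a single connected component this is clear: a shortest path between two quarter-points consists of two quarter-integer end segments plus an integer number of full edges. Points in different components have distance $\infty$, and such quadruples contribute no finite (or only degenerate) values; I would note that the paper's convention effectively works componentwise, as in the definition of $\delta(X)$ by connected components.

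I expect no genuine obstacle. The only subtlety is this componentwise and infinite-distance bookkeeping, and I would handle it by restricting attention to quadruples lying in one component. There all the quantities are finite multiples of $\tfrac18$, so the set $8\delta^*(J'(G)^4)$ is a nonempty subset of $\mathbb{Z}$ bounded above by $8\delta^*(G)$, and its maximum is attained.
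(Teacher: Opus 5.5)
Your proposal is correct and follows the paper's proof exactly: Theorem~\ref{PuntosCuartos} gives $\delta^*_{J'}(G)=\delta^*(G)<\infty$, and Proposition~\ref{prop:Jprime} then yields a quadruple in $J'(G)$ attaining this value. Your additional checks are sound. They cover nonemptiness via $V\subseteq J'(G)$, distances between points of $J'(G)$ lying in $\tfrac14\mathbb{Z}$, and restricting to a single component to avoid infinite distances, and they make explicit what the paper's proof of Proposition~\ref{prop:Jprime} takes for granted.
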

\begin{proof}
By Theorem~\ref{PuntosCuartos}, $\delta^*_{J'}(G)=\delta^*(G)<\infty$. Proposition~\ref{prop:Jprime} (whose proof places no restriction on the finiteness of $G$) then gives $x,y,z,w \in J'(G)$ with $\delta^*(x,y,z,w)=\delta^*_{J'}(G)=\delta^*(G)$.
\end{proof}

\section{Theoretical Applications of the Quarter-Point Theorem} \label{sec:applications}

The Quarter-Point Theorem (Theorem~\ref{PuntosCuartos}) establishes that computing the exact Gromov hyperbolicity constant $\delta^*(G)$ of a graph $G$ (finite or infinite) does not require evaluating the entire continuum of points in the metric graph $G$, but can be strictly reduced to the discrete set of quarter-points $J'(G)$. Beyond its clear algorithmic implications for reducing the computational search space, this discretization property serves as a powerful theoretical tool to derive tight theoretical bounds relating $\delta^*(G)$ to simpler or more restricted hyperbolic invariants.

In this section, we exploit the structural properties of $G$ and $J'(G)$ to address several fundamental problems in the metric theory of graphs. Specifically, we apply Theorem~\ref{PuntosCuartos} to:
\begin{enumerate}[(i)]
    \item Establish explicit comparison inequalities between the continuous hyperbolicity constant $\delta^*(G)$ and its restrictions to vertices ($\delta_V^*(G)$) and midpoints ($\delta_J^*(G)$), quantifying the precise maximum potential deviation when restricting the search space to these simpler subsets.
    \item Provide optimal extremal graph family examples (such as complete graphs and cycle graphs) that prove the sharpness of these theoretical bounds.
\end{enumerate}

Given a graph $G$, we denoted by  $\delta^*_V(G)$ the constant of hyperbolicity of the Gromov product restricted to the vertices of $G$, \emph{i.e.},
$$
\delta^*_V(G):= \sup \left \{ \min \{ (v_1,v_2)_{v_4},(v_2,v_3)_{v_4} \} - (v_1,v_3)_{v_4} : v_1,v_2,v_3,v_4 \in V(G)    \right \}.
$$

%The following result appears in \cite{Operadores}.
In \cite{Operadores} it was shown that
\begin{theorem}\label{Prop:Operadores}
Let $G$ be a graph, then   
$
\delta^*_V(G) \leq \delta^*(G) \leq \delta^*_V(G) +3.
$
\end{theorem}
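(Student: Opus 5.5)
The lower bound $\delta^*_V(G)\le\delta^*(G)$ is immediate, since $V(G)\subseteq G$ and $\delta^*(G)$ is a supremum over a larger set. The work is in the upper bound. I would prove it by a direct perturbation argument. Every point is moved to a nearby vertex, and we control how much each Gromov product can change.

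Fix $x,y,z,w\in G$ and one of the twelve expressions, say $\min\{(x,y)_w,(y,z)_w\}-(x,z)_w$. For each point $p\in\{x,y,z,w\}$ choose a vertex $\bar p\in V(G)$ with $d(p,\bar p)\le 1/2$; a closest endpoint of the edge containing $p$ works. By the triangle inequality each distance satisfies
\[
|d(\bar p,\bar q)-d(p,q)|\le d(p,\bar p)+d(q,\bar q)\le 1 .
\]
A Gromov product $(a,b)_c=\tfrac12\bigl(d(a,c)+d(b,c)-d(a,b)\bigr)$ involves three distances, so it changes by at most $3/2$. This crude estimate gives $\min\{(\bar x,\bar y)_{\bar w},(\bar y,\bar z)_{\bar w}\}-(\bar x,\bar z)_{\bar w}\ge \min\{(x,y)_w,(y,z)_w\}-(x,z)_w-3$. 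Taking suprema yields $\delta^*(G)\le\delta^*_V(G)+3$.

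For a sharper count, write the difference of Gromov products with the same base point explicitly. For instance
\[
(x,y)_w-(x,z)_w=\tfrac12\bigl(d(y,w)+d(x,z)-d(x,y)-d(z,w)\bigr),
\]
in which each of the four points appears in exactly two of the four distances. The total perturbation is therefore at most $\tfrac12\cdot 2\sum_p d(p,\bar p)\le 2$. The same holds for the second argument of the min, $(y,z)_w-(x,z)_w$. Since $\min\{a,b\}-c=\min\{a-c,\,b-c\}$ and the min is $1$-Lipschitz in each argument, this route would even give $\delta^*(G)\le\delta^*_V(G)+2$. So the stated bound with $3$ holds a fortiori. The only care needed is to apply this uniformly to all twelve expressions defining $\delta^*(x,y,z,w)$, which is immediate by symmetry.

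The one subtle point is disconnected graphs, where distances can be $\infty$. I would handle this by noting that $\delta^*$ is taken componentwise, or else restricting to quadruples in a single component. Within a component, nearest vertices stay in the same component, so no infinite distances arise. I do not expect any real obstacle here: the Quarter-Point Theorem is not needed for this statement, though it could alternatively be used to reduce to $J'(G)$, where every point lies within $1/4$ of $J(G)$.
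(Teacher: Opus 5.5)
Your proof is correct, and your sharper count in fact gives $\delta^*(G)\le\delta^*_V(G)+2$.

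The paper does not prove this statement; it quotes it from \cite{Operadores}. Your crude estimate is the natural direct proof of the constant $3$: each Gromov product moves by at most $3/2$ when every point is replaced by a vertex within distance $1/2$. The closest argument in the paper is the proof of the improvement, Theorem~\ref{The:Gv+1}. It rests on exactly the observation behind your sharper count, namely $\delta^*(x,y,z,w)\le\delta^*(x',y,z,w)+d(x,x')$, since each point enters every four-term difference of Gromov products through exactly two distances with coefficient $1/2$.

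The paper, however, does not move arbitrary points by up to $1/2$. It first uses the Quarter-Point Theorem and Proposition~\ref{prop:attainment-final} to place an extremal quadruple in $J'(G)$, so that non-midpoints lie within $1/4$ of a vertex. It then handles midpoints by a case analysis in which the vertex is chosen on a prescribed geodesic, so that certain distances change by an exact amount. This is what gives the sharp constant $1$.

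Your route buys something different. It is elementary and pointwise, and needs neither the discretization machinery nor attainment of the supremum. It therefore also covers the case $\delta^*(G)=\infty$, which the paper's proof of Theorem~\ref{The:Gv+1} silently excludes, because Proposition~\ref{prop:attainment-final} assumes $\delta^*(G)<\infty$. That case is settled precisely by a bound of your type, which forces $\delta^*_V(G)=\infty$.
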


The following result represents an improvement, now valid for any graph, finite or infinite.

\begin{theorem}\label{The:Gv+1}
Let $G$ be a graph, then 
$\d_V^*(G) \leq \delta^*(G) \leq \d_V^*(G) +1 $
 and these bounds are optimal.  
\end{theorem}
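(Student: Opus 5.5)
The lower bound $\delta_V^*(G)\le\delta^*(G)$ is immediate, since $V(G)\subseteq G$. For the upper bound I would use Theorem~\ref{PuntosCuartos} to restrict to quadruples in $J'(G)$. Fix $\varepsilon>0$ and choose $x,y,z,w\in J'(G)$ with $\delta^*(x,y,z,w)>\delta^*(G)-\varepsilon$. Relabel so that $\delta^*(x,y,z,w)=(x,y)_w-(x,z)_w$, and recall that $(y,z)_w\ge (x,y)_w$. For each $p\in\{x,y,z,w\}$ pick a vertex $\bar p$ that is an endpoint of an edge containing $p$ and is closest to $p$. Then $d(p,\bar p)\le 1/2$, and $d(p,\bar p)=0$ if $p$ is a vertex.

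The key estimate is that every Gromov product moves little under $p\mapsto\bar p$. I would write the target quantity as
$$\min\{(x,y)_w,(y,z)_w\}-(x,z)_w=\tfrac12\min\{d(y,w)+d(x,z)-d(x,y)-d(z,w),\; d(y,w)+d(x,z)-d(x,w)-d(y,z)\}.$$
Each expression is a signed sum of four distances in which every point occurs exactly twice. Replacing $p$ by $\bar p$ changes each distance involving $p$ by at most $d(p,\bar p)\le 1/2$. Hence each bracket changes by at most $2\sum_p d(p,\bar p)\le 4$, and after the factor $\tfrac12$ the change is at most $2$. This naive count gives only $+2$, so the main step is to sharpen it to $+1$.

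To get the sharper bound I would not round every point independently. Instead I would choose the rounding direction of each point using the geodesic structure, in the spirit of Proposition~\ref{Prop:vinxyxw} and Proposition~\ref{Th:1punto}.
\begin{enumerate}
\item Take the normalized configuration of Figure~\ref{fig:base}. For each non-vertex point, move it toward the endpoint lying on $[xy]$ or $[zw]$ (namely $v_0^x$, $v_0^y$, $v_0^z$, $v_0^w$). This makes the ``side'' distances $d(x,y)$ and $d(z,w)$ decrease exactly by the amount moved, or sometimes lets Proposition~\ref{Prop:vinxyxw} give no loss at all.
\item The diagonals $d(x,z)$ and $d(y,w)$ can then only decrease by at most the total displacement. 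The net loss in the first bracket comes only from the diagonals and is at most $\sum_p d(p,\bar p)\le 2$, which after halving gives at most $1$.
\item The second bracket, with sides $d(x,w)$ and $d(y,z)$, can increase by the displacement. Here I would use the slack $(y,z)_w-(x,y)_w\ge 0$ to absorb the increase, or else swap which term realizes the minimum. This is the delicate part.
\end{enumerate}

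The main obstacle is step 3: ensuring that the second term of the minimum does not lose more than $1$. If the slack argument fails, I would first try a parity argument. Distances between points of $J'(G)$ lie in $\tfrac14\mathbb{Z}$, and $\delta^*$ values on vertices lie in $\tfrac12\mathbb{Z}$. One could then first reduce from $J'(G)$ to $J(G)$, losing at most $1/4$ via moves as in Proposition~\ref{Prop:ww'}, and then from $J(G)$ to $V(G)$, losing at most $3/4$, using integrality to round the bound. Letting $\varepsilon\to0$ then gives $\delta^*(G)\le\delta_V^*(G)+1$.

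For optimality, the lower bound is attained by any tree, where both constants are $0$. For the upper bound I would use the cycle $C_3$ (or $K_n$): there $\delta_V^*=0$, since vertex distances are at most $1$ and the vertex set is too small to give a positive value. Meanwhile midpoints and quarter points of a longer cycle should give value $1$; I would compute a suitable $C_n$ explicitly, for instance $C_4$ with four midpoints placed so that $\delta^*=1$ while $\delta_V^*=0$.
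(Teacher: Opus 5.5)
\textbf{There is a genuine gap in the upper bound.} It sits exactly where you flag it: step 3 is the whole content of the theorem, not a technicality. Write $B_1=d(y,w)+d(x,z)-d(x,y)-d(z,w)$ and $B_2=d(y,w)+d(x,z)-d(x,w)-d(y,z)$.

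\begin{itemize}
\item Pushing every point toward $[xy]$ or $[zw]$, as in step 1, costs nothing in $B_1$. The two sides of $B_1$ shrink by exactly $\sum_p m_p$, which is the most the diagonals can lose.
\item All the cost then lands on $B_2$. The only available estimate is that $\tfrac12 B_2$ drops by at most $\sum_p m_p$. This is up to $2$ for four midpoints. It is up to $3$ if you push quarter points, because step 1 sends a quarter point to $v_0^p$ even when $v_0^p$ is the far endpoint, at distance $3/4$.
\item The slack $(y,z)_w-(x,y)_w=\tfrac12(B_2-B_1)$ can be $0$. It is $0$ in the extremal $K_4$ configuration below. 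So the slack absorbs nothing, and ``swap which term realizes the minimum'' is not an argument.
\end{itemize}

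The fallback also fails. The inequality $\delta^*(G)\le\delta^*_J(G)+\tfrac14$ is precisely what the paper leaves open; it only proves $+\tfrac12$ (Theorem~\ref{Th:mejora2}). The inequality $\delta^*_J(G)\le\delta^*_V(G)+\tfrac34$ is false, since $\delta^*_J(K_4)=1$ and $\delta^*_V(K_4)=0$. No integrality argument repairs a false intermediate inequality.

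\textbf{The missing idea is to choose rounding directions so the loss is split between the two brackets.} Each point has one side in $B_1$ and one in $B_2$; for $x$ these are $[xy]$ and $[xw]$.

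\begin{itemize}
\item Move a non-vertex $p$ by $m_p$ to the endpoint of its edge through which one of its sides leaves. This costs nothing in that side's bracket and at most $2m_p$ in the other.
\item If both sides of $p$ leave through the same endpoint, moving there costs nothing in either bracket (cf.\ Proposition~\ref{Prop:vinxyxw}).
\item On $J'(G)$, send each quarter point to its nearest vertex. Its halved cost is $\tfrac14$, in a bracket you do not control.
\item Distribute the midpoints freely. Each has halved cost $\tfrac12$, in a bracket you choose.
\end{itemize}

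Suppose $q_i$ quarter points are forced into bracket $i$ and there are $k$ midpoints, so $q_1+q_2+k\le4$. Then one can always split $k=k_1+k_2$ with $\tfrac{q_i}{4}+\tfrac{k_i}{2}\le 1$. Hence both brackets lose at most $1$.

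The paper implements the same balancing by cases on the number of midpoints in an optimal $J'$-quadruple. Vertices and quarter points cost at most $\tfrac14$ each, by the $1$-Lipschitz property of $\delta^*$. Midpoints are rounded jointly with a second point along chosen sides, so that the pair costs only $\tfrac12$; for example, in Case~3.1 both $x$ and $z$ move toward $w$.

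Your $\varepsilon$-approximation via Theorem~\ref{PuntosCuartos} is fine. It even avoids the paper's appeal to attainment, which needs $\delta^*(G)<\infty$.

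\textbf{Your sharpness example for the upper bound is wrong.} In $C_4$ the four vertices already have pair sums $2,2,4$, so $\delta^*_V(C_4)=1=\delta^*(C_4)$. That is why the paper uses $C_4$ for the \emph{lower} bound. $C_3$ gives a gap of only $\tfrac34$. The right example is $K_4$, which you mention but do not check. All vertex distances equal $1$, so $\delta^*_V(K_4)=0$. The midpoints of $ab,bc,cd,da$ have pair sums $2,2,4$, so $\delta^*(K_4)\ge 1$. Trees do work for the lower bound.
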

\begin{proof}
The lower bound is immediate, we will  demonstrate the upper bound. Note that if $x,y,z,w,x' \in G$  it is satisfied   $\d^*(x,y,z,w)\leq \delta^*(x',y,z,w) + d(x,x')$. 
By Proposition~\ref{prop:attainment-final} there exist $x,y,z,w \in J'(G)$ such that $\delta^*(G)=\dx=(x,y)_w-(x,z)_w $. Let $D=\{x,y,z,w\}$ be and $D_v:=\{ u \in V(G): d(u,v) \leq 1/2, \, v\in D \}$, in particular, if $u \in D$ we denoted by $u_v$ the element of $D_v$ such that $d(u,u_v) \leq 1/2$. Note that $u_v$   may not be unique. Following the idea of the proof of Proposition \ref{Th:4puntos} consider the cycle $C:=[xy]\cup [yz] \cup [zw] \cup [wx]$.
% $x_v,y_v,z_v,w_v$ the vertices of $G$ {\color{red}closest} to $x, \, y, \, z, \, w$ respectively ({\color{red} they may not be unique}). We have the following cases:

\emph{Case 1.} None of the elements of $D$ is a midpoint of any edge. We have that $ \vert u-u_v \vert \leq 1/4$ for every $u \in D$, so
\begin{align*}
\delta^*(G)=\dx  & \leq \delta^*(x_v,y_v,z_v,w_v)+ d(x,x_v)+d(y,y_v)+d(z,z_v)+d(w,w_v) \\
& \leq \delta^*(x_v,y_v,z_v,w_v) +1.
\end{align*}
%$$
%\delta^*(G)=\dx \leq \delta^*(x_v,y_v,z_v,w_v)+ d(x,x_v)+d(y,y_v)+d(z,z_v)+d(w,w_v) \leq \delta^*(x_v,y_v,z_v,w_v) +1.
%$$

\emph{Case 2.} One of the elements of $D$ is a midpoint of any edge. Suppose that $x$ is a midpoint.
 If $y_v \in [xy]$ then we choose $x_v \in [xw]$ and we have $d(x_v,w)=d(x,w)-1/2$ and $d(y_v,z) \leq d(y,z)+1/4$. 
 If $y_v \in [yz]$ we choose $x_v \in [xy]$ and we have $d(y_v,z)=d(y,z)-1/4$ and $d(x_v,w) \leq d(x,w)+1/2$. Note that in both cases we have $d(x_v,y_v) \leq d(x,y)+1/4$, $d(x_v,z) \geq d(x,z)-1/2$, $d(y_v,w) \geq d(y,w)-1/4$  and  $d(y_v,z)+d(x_v,w) \leq d(y,z)+d(x,w)+1/4$. We have 
 
   %$ d(x,z)+d(y,w)-3/4 \leq d(x_v,z)+d(y_v,w) \leq d(x,z)+d(y,w)+3/4$  and $d(y_v,z)+d(x_v,w) \leq d(y,z)+d(x,w)+1/4$. By Lemma \ref{Lem:dist-Gromov} we have  $d(y,z)+d(x,w) \leq d(x,y)+d(z,w)$, so
%Suppose that $x$ or $y $ is a midpoint, without loss of generality suppose that $x$ is a midpoint. We choose $x_v\in D_v$ such that $d(x_v,y_v) \leq d(x,y)+ 1/4$. By triangle inequality we have  $d(y_v,w)\geq d(y,w)-d(y,y_v)$,  $d(x_v,z) \geq d(x,z)-d(x,x_v)$,  so
\begin{align*}
(x_v,y_v)_w-(x_v,z)_w &= \frac{1}{2} \left( d(x_v,w)+d(w,y_v)-d(x_v,y_v)-d(x_v,w)-d(z,w)+d(x_v,z)  \right) \\
&=\frac{1}{2} \left(d(w,y_v)-d(x_v,y_v)-d(z,w)+d(x_v,z)   \right) \\
& \geq \frac{1}{2} \left(  \left(d(y,w)-1/4 \right) - \left(d(x,y)+1/4 \right) -\left( d(z,w) \right) + \left( d(x,z)-1/2 \right)   \right) \\
&=\frac{1}{2} \left( d(y,w)-d(x,y)-d(z,w)+d(x,z)-1 \right) \\
&=\frac{1}{2} \left( d(x,w)+d(y,w)-d(x,y)-d(x,w)-d(z,w)+d(x,z)-1  \right) \\
&= (x,y)_w-(x,z)_w - \frac{1}{2} \\
&=\delta^*(G)- \frac{1}{2}.\\
(y_v,z)_w-(x_v,z)_w&= \frac{1}{2} \left( d(y_v,w)+d(w,z)-d(y_v,z)-d(x_v,w)-d(w,z)+d(x_v,z)  \right) \\
&=\frac{1}{2} \left( d(y_v,w)+d(x_v,z)- \left( \, d(y_v,z)+d(x_v,w)\, \right) \right) \\
&\geq \frac{1}{2} \left( \left( d(y,w) -1/4  +d(x,z)-1/2 \right) - \left( d(y,z)+d(x,w)+1/4 \right)  \right) \\
&= \frac{1}{2} \left(  d(y,w) -d(y,z)- d(x,w)  +d(x,z)-1 \right) \\
&=\frac{1}{2} \left(  d(y,w)+d(z,w) -d(y,z)- \left( d(x,w) +d(z,w)  - d(x,z) \right) -1 \right) \\
&= (y,z)_w-(x,z)_w- \frac{1}{2}   \geq  (x,y)_w-(x,z)_w- \frac{1}{2} \\
 &= \delta^*(G)- \frac{1}{2}.
\end{align*}
So 
$
\delta^*(G)- \frac{1}{2} \leq \delta^*(x_v,y_v,z,w) \leq  \delta^*(x_v,y_v,z_v,w_v) +d(z,z_v)+d(w,w_v) \leq \delta^*_V(G)+\frac{1}{2},
$ 
therefore
$$
\delta^*(G) \leq \delta^*_V(G)+1.
$$
If the midpoint is $y$, $z$ or $w$ we follow a symmetrical procedure. %use the  change suggested in Remark \ref{Rem:1punto}.
%Si $y$, $z$ o $w$ fueran el punto medio podemos usar el cambio sugerido en el Remark \ref{Rem:1punto} para obtener el mismo resultado.

\emph{Case 3.} Two elements of $D$ are midpoints of any edge. We have the following subcases:

\emph{Case 3.1} $x$ and $z$ are midpoints.  We choose $x_v \in [xw]$ and $z_v \in [zw]$. We have $d(x_v,z_v) \geq d(x,z)-1$, $d(z_v,w)=d(z,w)-1/2$. $d(x_v,y) \leq d(x,y)+1/2$, $d(x_v,w)=d(x,w)-1/2$, $d(y,z_v)\leq d(y,z)+1/2$.
\begin{align*}
(x_v,y)_w-(x_v,z_v)_w&=\frac{1}{2} \left( d(x_v,w)+ d(y,w) - d(x_v,y)- \left(  d(x_v,w) + d(z_v,w)-  d(x_v,z_v) \right) \right) \\
&=\frac{1}{2} \left( d(y,w) - d(x_v,y)  - d(z_v,w)+  d(x_v,z_v)  \right) \\
&\geq \frac{1}{2} \left( d(y,w) -\left( d(x,y)+1/2 \right) - \left(d(z,w)-1/2  \right) + d(x,z)-1 \right) \\
&=\frac{1}{2} \left(  d(y,w) -  d(x,y) - d(z,w)+  d(x,z)-1 \right) \\
&=\frac{1}{2} \left( d(x,w)+  d(y,w) -  d(x,y) - \left( d(x,w)+ d(z,w) -   d(x,z) \right) -1 \right) \\
&=(x,y)_w-(x,z)_w- \frac{1}{2}.\\
(y,z_v)_w-(x_v,z_v)_w&=\frac{1}{2} \left( d(y,w) + d(z_v,w)-  d(y,z_v)- \left(  d(x_v,w) +  d(z_v,w) -  d(x_v,z_v) \right)  \right) \\
&=\frac{1}{2} \left( d(y,w) -  d(y,z_v)-  d(x_v,w)  +  d(x_v,z_v)   \right) \\
&=\frac{1}{2} \left( d(y,w)-(d(y,z)+1/2 )- (d(x,w)-1/2)+d(x,z)-1  \right) \\
&=\frac{1}{2} \left( d(y,w)- d(y,z) - d(x,w) +d(x,z)-1 \right) \\
&=\frac{1}{2} \left( d(y,w) +d(z,w)- d(y,z) - (d(x,w)+ d(z,w) - d(x,z))-1  \right) \\
&=(y,z)_w-(x,z)_w- \frac{1}{2} \\
&\geq  (x,y)_w-(x,z)_w- \frac{1}{2}.\\
%&=\frac{1}{2} \left(  \right) \\
%&\frac{1}{2} \left(  \right) \\
%&\frac{1}{2} \left(  \right) \\
\end{align*}

Therefore, 
$$
\delta^*(x_v,y,z_v,w) \geq \delta^*(x,y,z,w)- \frac{1}{2},
$$  
and so, 
$$
\delta^*(G)=\delta^*(x,y,z,w) \leq \delta^*(x_v,y,z_v,w) + \frac{1}{2} \leq \delta^*(x_v,y_v,z_v,w_v) + \frac{1}{2}+ d(y,y_v)+d(w,w_v) \leq \delta^*_V(G)+1.
$$
If $y$ and $w$ are the midpoints we follow a similar argument. % considering Remark \ref{Rem:2puntos2}.

\emph{Case 3.2} $x$ and $y$ are midpoints. We choose $x_v \in [xy]$ and $y_v \in [yz]$. We have $d(x_v,y_v)=d(x,y)$, $d(x_v,z) \geq d(x,z)-1/2$, $d(y_v,w) \geq d(y,w)-1/2$, $d(y_v,z)=d(y,z)-1/2$, $d(x_v,w) \leq d(x,w)+1/2$.
\begin{align*}
(x_v,y_v)_w-(x_v,z)_w &= \frac{1}{2} \left(d(y_v,w)-d(x_v,y_v)-d(z,w)+d(x_v,z)  \right) \\
& \geq \frac{1}{2} \left( d(y,w)-1/2-d(x,y)-d(z,w)+d(x,z)-1/2  \right) \\
& =\frac{1}{2} \left( d(y,w)-d(x,y)-d(z,w)+d(x,z) -1 \right) \\
&=(x,y)_w-(x,z)_w- \frac{1}{2}. \\
(y_v,z)_w-(x_v,z)_w &=\frac{1}{2} \left( d(y_v,w)-d(y_v,z)-d(x_v,w)+d(x_v,z) \right) \\
&\geq \frac{1}{2} \left( d(x,z)-1/2 - \left( d(y,z)-1/2 \right) -\left( d(x,w )+1/2 \right) +d(y,w)-1/2 \right) \\
&=\frac{1}{2} \left(  d(x,z) - d(y,z) -d(x,w ) +d(y,w)-1  \right) \\
&=(y,z)_w-(x,z)_w- \frac{1}{2} \geq (x,y)_w-(x,z)_w- \frac{1}{2}. \\
%&\frac{1}{2} \left(  \right) \\
%&\frac{1}{2} \left(  \right) \\
\end{align*}
Therefore, 
$$
\delta^*(x_v,y_v,z,w) \geq \delta^*(x,y,z,w)- \frac{1}{2},
$$
and so, 
$$
\delta^*(G)=\delta^*(x,y,z,w) \leq \delta^*(x_v,y_v,z,w) + \frac{1}{2} \leq \delta^*(x_v,y_v,z_v,w_v) + \frac{1}{2}+ \frac{1}{4} + \frac{1}{4} \leq \delta^*_V(G)+1.
$$
If $z,w$ are the midpoints we follow a similar argument. % considering Remark \ref{Rem:2puntosxy}.

\emph{Case 3.3} $x$ and $w$ are midpoints. We choose $x_v \in [xy]$ and $w_v \in [xw]$. We have $d(x_v,z) \geq d(x,z)-1/2$, $d(y,w_v) \geq d(y,w)-1/2$, $d(x_v,y)=d(x,y)-1/2$, $d(z,w_v) \geq d(z,w)+1/2$, $d(x_v,w_v)=d(x,w)$.
\begin{align*}
(x_v,y)_{w_v}-(x_v,z)_{w_v}&=\frac{1}{2} \left( d(y,w_v)-d(x_v,y)-d(z,w_v)+d(x_v,z)  \right) \\
&\geq \frac{1}{2} \left( d(y,w)-1/2 - \left(d(x,y)-1/2 \right)  - \left( d(z,w)+1/2 \right)+   d(x,z)-1/2  \right) \\
&=\frac{1}{2} \left( d(y,w) -d(x,y) - d(z,w) + d(x,z) -1 \right) \\
&=(x,y)_w-(x,z)_w - \frac{1}{2}. \\
(y,z)_{w_v}-(x_v,z)_{w_v} &= \frac{1}{2} \left( d(y,w_v)-d(y,z)-d(x_v,w_v)+d(x_v,z)  \right) \\
&\geq \frac{1}{2} \left( d(y,w)-1/2 -d(x,w) -d(y,z) + d(x,z)-1/2 \right) \\
&=(y,z)_{w}-(x_v,z)_{w} - \frac{1}{2} \geq  (x,y)_w-(x,z)_w - \frac{1}{2}.
%&\frac{1}{2} \left(  \right) \\
%&\frac{1}{2} \left(  \right) \\
%&\frac{1}{2} \left(  \right) \\
\end{align*}
Therefore, 
$$
\delta^*(x_v,y,z,w_v) \geq \delta^*(x,y,z,w)- \frac{1}{2},
$$
and so, 
$$
\delta^*(G)=\delta^*(x,y,z,w) \leq \delta^*(x_v,y,z,w_v) + \frac{1}{2} \leq \delta^*(x_v,y_v,z_v,w_v) + \frac{1}{2}+ \frac{1}{4} + \frac{1}{4} \leq \delta^*_V(G)+1.
$$
If $y$ and $z$ are the midpoints we follow a similar procedure. %considering Remark \ref{Rem:2puntosxw}.

\emph{Case 4.} Three elements of $D$ are midpoints of any edge. Suppose  that $x,y,z$ are midpoints. By \emph{Cases 2} and  \emph{3} we have 
$$\delta^*(G)=\delta^*(x,y,z,w) \leq \delta^*(x_v,y_v,z,w)+ \frac{1}{2} \leq  \delta^*(x_v,y_v,z_v,w_v)+ \frac{1}{2} + \frac{1}{2} \leq \delta^*_V(G)+1. $$

\emph{Case 5.} Four elements of $D$ are midpoints of any edge. By \emph{Case 3} we have 
$$\delta^*(G)=\delta^*(x,y,z,w) \leq \delta^*(x_v,y_v,z,w)+ \frac{1}{2} \leq  \delta^*(x_v,y_v,z_v,w_v)+ \frac{1}{2} + \frac{1}{2} \leq \delta^*_V(G)+1. $$

The lower bound %in Theorem \ref{The:Gv+1} 
is attained  for the cycle graph $C_4$ and the upper bound is attained for the complete graph $K_4$.
\end{proof}

%{\color{red} %inicia color rojo

Given a graph $G$, we denoted by  $\delta^*_J(G)$ the constant of hyperbolicity of the Gromov product restricted to the vertices of $G$ and to the midpoints of the edges of $G$, \emph{i.e.},
%Denotamos por $\delta^*_V(G)$ la constante de hiperbolicidad del producto de Gromov restringida a los v\'ertices de $G$, es decir:
$$
\delta^*_J(G):= \sup \left \{ \min \{ (v_1,v_2)_{v_4},(v_2,v_3)_{v_4} \} - (v_1,v_3)_{v_4} : v_1,v_2,v_3,v_4 \in J(G)    \right \}.
$$

%The proof of the following result follows the same procedure as that of Proposition \ref{Prop:Gv+2}.
%
%\begin{proposition}\label{prop:Jbound}
%Let $G$ be a graph, then
%$$
%\delta^*_{J}(G) \leq \delta^*(G) \leq \delta^*_J(G)+1. 
%$$
%\end{proposition}
%\begin{proof}
%The lower bound is immediate, we will  demonstrate the upper bound. Note that if $x,y,z,w,x' \in G$  it is satisfied   $\d^*(x,y,z,w)\leq \delta^*(x',y,z,w) + d(x,x')$. Let $x_1,x_2,x_3,x_4 \in G$ be such that $\delta^*(x_1,x_2,x_3,x_4)=\delta^*(G)$. There exist $x^J_1,x^J_2,x^J_3,x^J_4 \in J(G)$ such that $d(x_i,x^J_i) \leq 1/4$, $1 \leq i \leq 4$, therefore
%$$
%\delta^*(G)= \delta^*(x_1,x_2,x_3,x_4) \leq \delta^*(x^J_1,x_2,x_3,x_4) +1/4 \leq \dots \leq \delta^*(x^J_1,x^J_2,x^J_3,x^J_4)+1 \leq \delta_J^*(G)+1.
%$$
%\end{proof}

%The following result is an improvement on the previous one.

\begin{theorem}\label{Th:mejora2}
Let $G$ be a graph, then
$$ 
\delta^*_{J}(G) \leq \delta^*(G) \leq \delta^*_J(G)+ \frac{1}{2}. 
$$
\end{theorem}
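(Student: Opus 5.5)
The lower bound $\delta^*_J(G)\le\delta^*(G)$ is immediate, since $J(G)\subseteq G$. For the upper bound I would follow the scheme of the proof of Theorem~\ref{The:Gv+1}, now rounding only to points of $J(G)$ instead of to vertices. If $\delta^*(G)=\infty$, Theorem~\ref{PuntosCuartos} shows that $\delta^*_{J'}(G)=\infty$. Rounding each point of a quadruple in $J'(G)$ to a point of $J(G)$ at distance at most $1/4$ changes $\delta^*$ by at most $1$, using the Lipschitz estimate $\delta^*(x,y,z,w)\le\delta^*(x',y,z,w)+d(x,x')$ noted in the proof of Theorem~\ref{The:Gv+1}. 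Hence $\delta^*_J(G)=\infty$ and there is nothing to prove. So assume $\delta^*(G)<\infty$. By Proposition~\ref{prop:attainment-final} we can pick $x,y,z,w\in J'(G)$ with
$$\delta^*(G)=\delta^*(x,y,z,w)=(x,y)_w-(x,z)_w .$$
Let $D=\{x,y,z,w\}$. Every point of $D$ lies either in $J(G)$ or at distance exactly $1/4$ from two points of $J(G)$, namely a vertex and a midpoint. I would then split into cases according to how many points of $D$ lie in $J'(G)\setminus J(G)$, which I call quarter points.

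If at most two points of $D$ are quarter points, the Lipschitz estimate alone gives a loss of at most $2\cdot\frac14=\frac12$, which is what we want. The real work is in the cases with three or four quarter points. There I want to move two of the points at zero cost, as in Case~3 of the proof of Theorem~\ref{The:Gv+1}, and the natural tool is the choice of direction used there. For a quarter point $u$, the two adjacent points of $J(G)$ lie on opposite sides of $u$ along its edge. So one of them lies on the geodesic from $u$ to one neighbour in the cycle $[xy]\cup[yz]\cup[zw]\cup[wx]$ (the convention of Figure~\ref{fig:base}), and the other lies on the geodesic to the other neighbour. Take $x$ and $z$ to be quarter points. Move $x$ by $1/4$ toward $w$ along $[xw]$ and $z$ by $1/4$ toward $w$ along $[zw]$. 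Then $d(x_J,w)=d(x,w)-\frac14$, $d(z_J,w)=d(z,w)-\frac14$, $d(x_J,z_J)\ge d(x,z)-\frac12$, and $d(x_J,y)\le d(x,y)+\frac14$. Repeating the computation of Case~3.1 with $1/4$ in place of $1/2$, I expect
$$\delta^*(x_J,y,z_J,w)\ge\delta^*(x,y,z,w)-\tfrac14 .$$
Analogous choices for the pairs $\{x,y\}$ and $\{x,w\}$ follow Cases~3.2 and~3.3: for example, move $x$ along $[xy]$ and $y$ along $[yz]$, so that $d(x_J,y_J)=d(x,y)$. Each of these costs only $1/4$ for the pair.

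With this, three quarter points cost at most $\frac14+\frac14=\frac12$ (a pair plus a single point), and four quarter points cost at most $\frac14+\frac14=\frac12$ (two pairs). In all cases this gives $\delta^*(G)\le\delta^*_J(G)+\frac12$. I expect the main obstacle to be the four-quarter-point case, where the two pair moves must be compatible. After the first pair has been moved, the geodesic configuration of the new quadruple may differ, so the second pair estimate has to be rechecked for the new quadruple rather than simply applied. If that fails, I would fall back on the parity argument of Proposition~\ref{Th:4puntos}: shift all four points simultaneously by $\varepsilon_1$ or $\varepsilon_2$ along the cycle, which preserves the four side lengths, and use the fact that one of the two directions does not decrease $d(x,z)+d(y,w)$. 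A simultaneous shift by $1/4$ would then move all four points into $J(G)$ at no cost. The difficulty is that this only works when all four points can be shifted consistently in the orientations of Figure~\ref{fig:4pts}, and handling the remaining orientations is the step I expect to need the most checking.
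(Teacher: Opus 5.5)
Your argument is correct, and the obstacle you anticipate in the four-quarter-point case does not arise, so the fallback is unnecessary. Put $F(x,y,z,w)=\min\{(x,y)_w,(y,z)_w\}-(x,z)_w$. Then
$$2F(x,y,z,w)=d(x,z)+d(y,w)-\max\{d(x,y)+d(z,w),\,d(y,z)+d(x,w)\},$$
and $\delta^*\ge F$ for every quadruple.

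Your pair estimates are really estimates for $F$ at an \emph{arbitrary} quadruple. They use only two facts: the two moved points are quarter points, so the point of $J(G)$ in the prescribed direction is at distance exactly $1/4$; and the triangle inequality. No geodesic configuration as in Figure~\ref{fig:base} is involved.

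So after replacing $x,z$ by $x_J,z_J$, take $y_J\in[y\,x_J]$ and $w_J\in[w\,x_J]$ at distance $1/4$ from $y$ and $w$. Then neither $d(x_J,y_J)+d(z_J,w_J)$ nor $d(y_J,z_J)+d(x_J,w_J)$ exceeds its previous value. Also $d(x_J,z_J)$ is unchanged and $d(y_J,w_J)\ge d(y,w)-\tfrac12$. Hence $F$ drops by at most $1/4$ once more, for a total loss of $1/2$.

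There are two minor slips, and neither matters because only the inequalities are used:
\begin{itemize}
\item The claim that the two $J$-neighbours of a quarter point lie on geodesics to its two cycle-neighbours does not follow; both geodesics may leave through the same side.
\item The equality $d(x_J,y_J)=d(x,y)$ should read $\le$.
\end{itemize}
Your separate treatment of $\delta^*(G)=\infty$ is also genuinely needed, since Proposition~\ref{prop:attainment-final} assumes $\delta^*(G)<\infty$.

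The paper proceeds differently. It rounds all four points at once to $x',y',z',w'\in J(G)$ with $d(x',z')=d(x,z)$ and $d(y',w')=d(y,w)$. It lets each side grow by at most $1/2$ and gets the loss $1/2$ in a single computation, with no case split.

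What that buys is brevity. The cost is that the existence of such a rounding is asserted without proof. It is in fact impossible when exactly one of $x,z$ (or of $y,w$) is a quarter point: then $d(x,z)\in\tfrac14+\tfrac12\mathbb{Z}$, while $d(x',z')\in\tfrac12\mathbb{Z}$ for all $x',z'\in J(G)$.

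Your decomposition never requires a distance to be preserved exactly, so it also covers these mixed cases. It uses the Lipschitz bound when at most two points are quarter points, and pair moves of cost $1/4$ otherwise.

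Both arguments rest on Proposition~\ref{prop:attainment-final}. Starting from a maximizer in $J'(G)$ is what makes every forced move have length exactly $1/4$.
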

\begin{proof}
The lower bound is immediate. We will prove the upper bound. 
By Proposition~\ref{prop:attainment-final} there are $x,y,z,w \in J'(G)$ such that $\delta^*(G)= \dx$, suppose that $\dx =(x,y)_w-(x,z)_w$.
Consider $x',y',z',w' \in J(G)$ such that $d(x,x') \leq 1/4$, $d(y,y') \leq 1/4$, $d(z,z') \leq 1/4$, $d(w,w') \leq 1/4$, $d(x',z')=d(x,z)$, $d(y',w')=d(y,w)$. 
We have that $d(x',y') \leq d(x,x')+ d(x,z)+ d(z,z')=d(x,z)+ 1/2$. Similarly, we have $d(x',w') \leq d(x,w) + 1/2$, $d(y',z') \leq d(y,z)+1/2$ and $d(w',z') \leq d(w,z)+1/2$.
We have
\begin{align*}
(x',y')_{w'}-(x',z')_{w'} &= \frac{1}{2} \left(d(x',w')+ d(y',w')- d(x',y') - d(x',w')-d(z',w') +d(x',z')\right) \\
& \geq \frac{1}{2} \left( d(y,w)- \left( d(x,y) + \frac{1}{2} \right) - \left( d(z,w) + \frac{1}{2} \right) +d(x,z)  \right) \\
&=(x,y)_w-(x,z)_w - \frac{1}{2}\\
 &= \delta^*(G)- \frac{1}{2}. \\
(y',z')_{w'}-(x',z')_{w'} &= \frac{1}{2} \left( d(y',w')+d(z',w')- d(z',y') - d(x',w')-d(z',w') +d(x',z')\right) \\
& \geq \frac{1}{2} \left( d(y,w) - \left( d(y,z) + \frac{1}{2} \right) - \left( d(x,w) + \frac{1}{2} \right) +d(x,z)  \right) \\
&= (y,z)_w-(x,z)_w - \frac{1}{2} \\
&\geq   (x,y)_w-(x,z)_w - \frac{1}{2} \\
&= \delta^*(G)- \frac{1}{2}.
\end{align*}
So 
$$
\delta_J^*(G) \geq \min \{ (x',y')_{w'}, (y',z')_{w'} \} - (x',z')_{w'} \geq  (x,y)_w-(x,z)_w - \frac{1}{2} = \delta^*(G)- \frac{1}{2}.
$$
Therefore
$$
\delta^*(G) \leq \delta_J^*(G) + \frac{1}{2}. 
$$
\end{proof}

\begin{corollary}
Let $G$ be a graph, then
$$
\delta_V^*(G) \leq \delta_J^*(G) \leq \delta^*(G) \leq  \min \{\delta_J^*(G)+ \frac{1}{2}, \delta_V^*(G)+1 \}.  
$$
\end{corollary}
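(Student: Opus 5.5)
The corollary asks for three inequalities, and I will obtain each from results already in the paper; nothing new needs to be computed. I will treat the chain in three pieces: the leftmost inequality, the middle inequality, and the upper bound on the right.

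\emph{Leftmost inequality.} For $\delta_V^*(G) \le \delta_J^*(G)$ I will use set inclusion. Since $V(G) \subseteq J(G)$, every quadruple $v_1,v_2,v_3,v_4 \in V(G)$ also lies in $J(G)^4$. Hence the set of values whose supremum defines $\delta_V^*(G)$ is a subset of the set whose supremum defines $\delta_J^*(G)$. A supremum over a subset can only be smaller or equal, which gives the inequality. This holds even when the suprema are infinite.

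\emph{Middle inequality.} For $\delta_J^*(G) \le \delta^*(G)$ I will use the lower bound of Theorem~\ref{Th:mejora2}. Alternatively, the same inclusion argument works with $J(G) \subseteq G$.

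\emph{Upper bound.} For $\delta^*(G) \le \min\{\delta_J^*(G)+\frac12,\ \delta_V^*(G)+1\}$ I will combine two earlier results:
\begin{itemize}
\item the upper bound $\delta^*(G) \le \delta_J^*(G)+\frac12$ of Theorem~\ref{Th:mejora2};
\item the upper bound $\delta^*(G) \le \delta_V^*(G)+1$ of Theorem~\ref{The:Gv+1}.
\end{itemize}
A number bounded above by two quantities is bounded above by their minimum, which gives the right-hand inequality.

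\emph{The non-hyperbolic case.} The only point needing care is $\delta^*(G)=\infty$. Both Theorem~\ref{Th:mejora2} and Theorem~\ref{The:Gv+1} rely on Proposition~\ref{prop:attainment-final}, which assumes $\delta^*(G)<\infty$. I will handle the infinite case as follows:
\begin{itemize}
\item Theorem~\ref{PuntosCuartos} holds for every graph, so it gives $\delta^*_{J'}(G)=\infty$.
\item Every point of $J'(G)$ lies within distance $1/4$ of $J(G)$, and within distance $1/2$ of $V(G)$.
\item Using $\delta^*(x,y,z,w) \le \delta^*(x',y,z,w)+d(x,x')$ for each coordinate, it follows that $\delta_J^*(G)=\delta_V^*(G)=\infty$.
\end{itemize}
So all quantities in the chain are infinite and the chain holds trivially.

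This last case is the only step I expect to need attention. Everything else is a direct citation of the two theorems.
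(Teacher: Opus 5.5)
Your proposal is correct and matches the paper, which gives the corollary no separate proof: it is the combination of the inclusions $V(G)\subseteq J(G)\subseteq G$ with the upper bounds of Theorems~\ref{The:Gv+1} and~\ref{Th:mejora2}. Your separate treatment of $\delta^*(G)=\infty$ is a worthwhile addition the paper omits, since both theorems go through Proposition~\ref{prop:attainment-final}, which requires $\delta^*(G)<\infty$; it does not even need Theorem~\ref{PuntosCuartos}, because every point of $G$ lies within $1/4$ of $J(G)$ and within $1/2$ of $V(G)$, so the Lipschitz estimate gives $\delta_J^*(G)\ge\delta^*(G)-1$ and $\delta_V^*(G)\ge\delta^*(G)-2$ directly.
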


The previous Corollary might lead one to think that we could obtain the relation 
$$
\delta^*_J (G) \leq \delta^*_V(G)+1/2,
$$
however, this is false, since $\delta^*_V(K_4)=0$ and $\delta^*_J(K_4)=1$.

%PREGUNTA: lo siguiente ser\'ia cierto? Dijo Jesus que no, pero no recuerdo el ejemplo
%
%$$
%\delta_J^*(G)+ \frac{1}{2} \leq \delta_V^*(G)+1. 
%$$

%} %termina color rojo

\section{Conclusions}

In this paper we proved the Quarter-Point Theorem (Theorem~\ref{PuntosCuartos}): for any graph $G$, finite or infinite, the exact Gromov hyperbolicity constant $\delta^*(G)$, in principle defined as a supremum over quadruples of points ranging over the whole metric graph, is in fact attained on the discrete subset $J'(G)$ of vertices, edge midpoints, and quarter-points (Proposition~\ref{prop:attainment-final}). As a consequence, we obtained sharper structural bounds, valid for every graph, comparing $\delta^*(G)$ with its restrictions to vertices, $\delta_V^*(G)$, and to vertices and midpoints, $\delta_J^*(G)$ (Theorem~\ref{The:Gv+1}, Theorem~\ref{Th:mejora2}), improving the previously known bound $\delta_V^*(G) \le \delta^*(G) \le \delta_V^*(G)+3$ (Theorem~\ref{Prop:Operadores}) to $\delta_V^*(G) \le \delta^*(G) \le \delta_V^*(G)+1$, together with the analogous bound $\delta_J^*(G) \le \delta^*(G) \le \delta_J^*(G)+\tfrac12$.

Beyond the sharper bounds themselves, the discretization result reduces the computation of $\delta^*(G)$ from a continuous optimization problem to a finite (or, for infinite $G$, discrete) combinatorial search over $J'(G)$, which is of independent algorithmic interest for computing hyperbolicity in networks and metric graphs.

%Propositions \ref{Th:1punto}, \ref{Th:2puntos1}, \ref{Th:2puntos2}, \ref{Th:3puntos} and \ref{Th:4puntos} are now stated for an arbitrary quadruple $x,y,z,w \in G$, rather than one already attaining $\delta^*(G)$; consequently, Theorem~\ref{PuntosCuartos} holds unconditionally, for finite and infinite graphs alike.
%
%Some questions remain open. First, explicit extremal examples establishing the sharpness of the bounds in Theorem~\ref{Th:mejora2} ($\delta^*(G)\le \delta_J^*(G)+\tfrac12$) are still needed, beyond the cycle $C_4$ and complete graph $K_4$ already used to show optimality of the bounds in Theorem~\ref{The:Gv+1}. 
An open problem remains: we still need explicit extreme examples to demonstrate the accuracy of the bounds in Theorem~\ref{Th:mejora2} ($\delta^*(G)\le \delta_J^*(G)+\tfrac12$), in fact, we believe that this bound can be improved to $\delta^*(G)\le \delta_J^*(G)+\tfrac14$.

\section*{Acknowledgements}


\begin{thebibliography}{99}

\bibitem{BRS} Bermudo, S., J. M. Rodr\'iguez, and J. M. Sigarreta, Computing the hyperbolicity constant, {\it Comput. Math. Appl.} 62 (2011), 4592--4595.
https://doi.org/10.1016/j.camwa.2011.10.041


%capitulo de libro
\bibitem{BHB} B. H. Bowditch, Notes on Gromov's hyperbolicity criterion for path-metric spaces in E. Ghys, A. Haefliger, A. Verjovsky (Eds.), Group Theory from a Geometrical Viewpoint,  World Scientific, River Edge, NJ, 1991,  64--167.
%\bibitem{BHB} B. H. Bowditch, Notes on Gromov's hyperbolicity criterion for path-metric spaces in E. Ghys, A. Haefliger, A. Verjovsky (Eds.), Group Theory from a Geometrical Viewpoint, Trieste, 1990; World Scientific, River Edge, NJ, 1991 64--167.
%https://doi.org/10.1142/1235 No estoy seguro que sea este


\bibitem{ABCD}
Alonso, J.; Brady, T.; Cooper, D.; Delzant, T.; Ferlini, V.; Lustig, M.; Mihalik, M.; Shapiro, M.; Short, H.
Notes on word hyperbolic groups. In Group Theory from a Geometrical Viewpoint; Ghys, E., Haefliger, A.,
Verjovsky, A., Eds.;World Scientific: Singapore, 1992; pp. 3--63.


\bibitem{Ghys} Ghys, E.; de la Harpe, P. Sur les Groupes Hyperboliques d'apr\`es Mikhael Gromov; Progress in Mathematics 83;
Birkh\"auser Boston Inc.: Boston, MA, USA, 1990.


\bibitem{Operadores} J. A. M\'endez-Berm\'udez, Rosalio Reyes, Jos\'e M. Rodr\'iguez and Jos\'e M. Sigarreta, Hyperbolicity on Graph Operators, {\it symmetry}  10, 2018, 360



%book
\bibitem{G1} M. Gromov. Hyperbolic groups. In {\em Essays in Group Theory};
Gersten, S.M., Ed.; Mathematical Sciences Research Institute Publications; Springer: Berlin, Germany, {\bf 8} (1987),  75--263.


\bibitem{O} K. Oshika.  Discrete groups. \emph{AMS Bookstore}; {Providence, RI, USA, 2002.}


\bibitem{BRSV} S. Bermudo,J. M. Rodr\'{\i}guez, J. M. Sigarreta  and J.-M. Vilaire. Gromov hyperbolic graphs,   {\it Discr. Math.} {\bf  313} (2013), 1575--1585.
https://doi.org/10.1016/j.disc.2013.04.009

\bibitem{BeRoSiVi1}
S. Bermudo, J. M. Rodr\'iguez, J. M. Sigarreta and J.-M. Vilaire.
\newblock Mathematical properties of Gromov hyperbolic graphs,
\newblock {\it  AIP Conf. Proc.}
 {\bf 1281} (2010), 575--578.
 https://doi.org/10.1063/1.3498541


\bibitem{BeRoSiTo}
 S. Bermudo, J. M. Rodr\'iguez, J. M. Sigarreta  and E. Touris.
\newblock Hyperbolicity and complement of graphs,
\newblock {\it Appl. Math. Letters.}
{\bf 24} (2011), 1882--1887.

\bibitem{BHB1}
G. Brinkmann, J. Koolen and V. Moulton.
\newblock On the hyperbolicity of chordal graphs,
\newblock {\it Ann. Comb.} {\bf 5(1)} (2001), 61--69.
DOI: 10.1007/s00026-001-8007-7


\bibitem{CPRS}
W. Carballosa, D. Pestana, J. M. Rodr{\'\i}guez and J. M. Sigarreta.
\newblock Distortion of the hyperbolicity constant of a graph.
\newblock {\it Electr. J. Combin.}
 {\bf 19} (2012), P67.
 https://doi.org/10.37236/2175

\bibitem{CRSV} W. Carballosa, J. M.  Rodr\'{\i}guez, J. M. Sigarreta and M.  Villeta. On the hyperbolicity constant of Line Graphs, {\it Electr. J. Combin.} {\bf 18} (2011), P210.
https://doi.org/10.37236/697

\bibitem{K50} R. Frigerio and A. Sisto. Characterizing hyperbolic spaces and real trees,
{\it Geom. Dedicata.} {\bf 142} (2009), 139--149.
DOI: 10.1007/s10711-009-9363-4



\bibitem{GL} C. Gavoille and O. Ly.  Distance Labeling in Hyperbolic Graphs. In: Deng X., Du DZ. (eds) Algorithms and Computation. ISAAC 2005. Lecture Notes in Computer Science. Springer, Berlin, Heidelberg. {\bf  3827} (2005), 1071--1079. https://doi.org/10.1007/11602613\_106

\bibitem{K27}
E. Jonckheere and P. Lohsoonthorn.
\newblock A hyperbolic geometry approach to multipath routing,
\newblock Proceedings of the 10th Mediterranean Conference
on Control and Automation (MED 2002), Lisbon, Portugal, July 2002.
FA5-1.

\bibitem{K22} E. A. Jonckheere, P. Lohsoonthorn. Geometry of network security.
In Proceedings of the American Control Conference. (2004), 111--151.
% Boston, MA, USA, 30 June--2 July 2004;  111--151.

\bibitem{K24}
E. A. Jonckheere, P. Lohsoonthorn and F. Bonahon.
\newblock Scaled Gromov hyperbolic graphs,
\newblock {\it J. Graph Theory.}  {\bf 57(2)} (2008), 157--180.
https://doi.org/10.1002/jgt.20275



\bibitem{KoMo} J. H. Koolen and V. Moulton. Hyperbolic Bridged Graphs, {\it Europ. J. Comb.} {\bf 23} (2002), 683--699.
DOI: 10.1006/eujc.2002.0591

\bibitem{MiRoSiVi1}
J. Michel, J. M. Rodr{\'\i}guez, J. M. Sigarreta and M. Villeta.
\newblock Hyperbolicity and parameters of graphs,
\newblock {\em Ars Combin.}
{\bf 100} (2011), 43--63.
%NO doi

\bibitem{MiRoSiVi2}
J. Michel, J. M. Rodr\'{\i}guez, J. M. Sigarreta and M. Villeta.
\newblock Gromov hyperbolicity in Cartesian product graphs,
\newblock {\em Proc. Indian Acad. Sci. (Math. Sci.)}
{\bf 120(5)} (2010), 593--609.
DOI: 10.1007/s12044-010-0048-6

\bibitem{PRSV2}
D. Pestana, J. M. Rodr\'{\i}guez, J. M., Sigarreta and M. Villeta.
\newblock Gromov hyperbolic cubic graphs,
\newblock {\em Central Europ. J. Math.}
{\bf 10} (2012), 1141--1151.
https://doi.org/10.2478/s11533-012-0036-4


\bibitem{PRT1}
A. Portilla, J. M. Rodr\'{\i}guez,  and E. Tour\'is.
\newblock Gromov hyperbolicity through decomposition of metric spaces II,
\newblock {\em J. Geom. Anal.}
 {\bf 14(1)} (2004), 123--149.
 DOI: 10.1007/BF02921869

\bibitem{PRT2}
A. Portilla, J. M. Rodr\'{\i}guez and E. Tour\'is.
\newblock Stability of Gromov hyperbolicity,
\newblock {\em J. Adv. Math. Studies.}
 {\bf 2(2)} (2009),  77--96.
 %NO DOI

\bibitem{PT}
A. Portilla and E. Tour{\'\i}s.
\newblock A characterization of Gromov hyperbolicity of surfaces with variable
negative curvature,
\newblock {\em Publ. Mat.}
  {\bf 53(1)} (2009), 83--110.
  DOI: 10.5565/PUBLMAT\_53109\_04

\bibitem{RS}
J. M. Rodr\'{\i}guez and J. M. Sigarreta.
\newblock Bounds on Gromov hyperbolicity constant in graphs,
\newblock {\em Proc. Indian Acad. Sci. (Math. Sci.)}
{\bf 122} (2012), 53--65.
https://doi.org/10.1007/s12044-012-0060-0

\bibitem{RSVV} J. M. Rodr\'{\i}guez, J. M. Sigarreta, J.-M Vilaire, J.-M. and M. Villeta. On the hyperbolicity constant in graphs, {\it Discr. Math}. {\bf  311} (2011), 211--219.
https://doi.org/10.1016/j.disc.2010.11.005


\bibitem{T}
E. Tour{\'\i}s.
\newblock Graphs and Gromov hyperbolicity of non-constant negatively curved surfaces,
\newblock {\em J. Math. Anal. Appl.}
 {\bf 380(2)} (2011), 865--881.
https://doi.org/10.1016/j.jmaa.2011.02.067

\bibitem{WZ} Y. Wu and C. Zhang.
Chordality and hyperbolicity of a graph, {\it Electr. J. Comb.} {\bf 18(1)} (2011). %, P43.
https://doi.org/10.37236/530



%\bibitem{Soto} Soto, M. Quelques Propri¨¦t¨¦s Topologiques des Graphes et Applications a Internet et aux R¨¦seaux. Ph.D. Thesis, Universit¨¦ Paris Diderot, Paris, France, 2011.

%%book
%\bibitem{GH} E. Ghys, P.  de la Harpe, Sur les Groupes Hyperboliques d'apr\`es Mikhael Gromov,  Progress in Mathematics 83, Birkh\"auser Boston Inc., Boston, MA, 1990.

%\bibitem{BeRoSi}
%S. Bermudo, J. M. Rodr\'iguez and J. M. Sigarreta.
%\newblock Computing the hyperbolicity constant,
%\newblock {\em Comput Math. Appl.}
%{\bf 62} (2011), 4592--4595.
%https://doi.org/10.1016/j.camwa.2011.10.041




\end{thebibliography}
\end{document}